\pgfplotsset{compat = newest}
\newcommand{\R}{\mathbb{R}}
\newcommand{\B}{\mathbb{B}}
\newcommand{\Be}{\mathsf{B}}
\newcommand{\eps}{\varepsilon}
\newtheorem{theorem}{Theorem}[section]
\theoremstyle{plain}
\newtheorem{corollary}{Corollary}[section]
\newtheorem{definition}{Definition}[section]
\newtheorem{lemma}{Lemma}[section]
\newtheorem{proposition}{Proposition}[section]
\newtheorem{remark}{Remark}[section]
\newtheorem{theorema}{Theorem}[section]
\newcommand{\hn}{\mathbb{H}^{n}}
\newcommand{\De} {\Delta}
\newcommand{\la} {\lambda}
\newcommand{\rn}{\mathbb{R}^{n}}
\newcommand{\bn}{\mathbb{B}^{n}}
\newcommand{\calu}{\mathcal{U}}
\newcommand{\dvg}{{\rm d}v_{\bn}}
\newcommand{\tstar}{2^{\star}}
\newcommand{\cald}{\mathcal{D}}
\numberwithin{equation}{section}
\begin{document}
\title[ Stability of Poincar\'e-Sobolev inequality in the hyperbolic space]{Sharp Quantitative stability of Poincar\'e-Sobolev inequality in the hyperbolic space and applications to fast diffusion flows}

\author{Mousomi Bhakta}
\address{Mousomi Bhakta, Department of Mathematics\\
Indian Institute of Science Education and Research Pune (IISER-Pune)\\
Dr Homi Bhabha Road, Pune-411008, India}
\email{mousomi@iiserpune.ac.in}
\author{Debdip Ganguly}
\address{Debdip Ganguly, Department of Mathematics\\
Indian Institute of Technology Delhi\\
IIT Campus, Hauz Khas, New Delhi, Delhi 110016, India}
\email{debdipmath@gmail.com, debdip@maths.iitd.ac.in}
\author{Debabrata Karmakar}
\address{Debabrata Karmakar, Tata Institute of Fundamental Research\\
Centre For Applicable Mathematics\\
Post Bag No 6503, GKVK Post Office, Sharada Nagar, Chikkabommsandra, Bangalore 560065, India}
\email{debabrata@tifrbng.res.in}
\author{Saikat Mazumdar}
\address{Saikat Mazumdar, Department of Mathematics\\
Indian Institute of Technology Bombay\\
Mumbai 400076, India}
\email{saikat.mazumdar@iitb.ac.in}

\date{\today}
\subjclass[2010]{Primary: 58J05, 46E35, 35A23, 35J61}
\keywords{Stability, Poincar\'e-Sobolev, Hyperbolic space, Fast diffusion flow, Hardy-Sobolev-Mazy'a inequality}

%\thanks{XX}

\begin{abstract}
Consider the Poincar\'e-Sobolev inequality on the  hyperbolic space: for every $n \geq 3$  and $1 < p \leq \frac{n+2}{n-2},$ there exists a best constant 
$S_{n,p, \lambda}(\mathbb{B}^{n})>0$ such that 
$$S_{n, p, \lambda}(\mathbb{B}^{n})\left(~\int \limits_{\mathbb{B}^{n}}|u|^{{p+1}} \, {\rm d}v_{\mathbb{B}^n} \right)^{\frac{2}{p+1}}
 \leq\int \limits_{\mathbb{B}^{n}}\left(|\nabla_{\mathbb{B}^{n}}u|^{2}-\lambda u^{2}\right) \, {\rm d}v_{\mathbb{B}^n},$$
holds for all  $u\in C_c^{\infty}(\mathbb{B}^n),$ and  $\lambda \leq \frac{(n-1)^2}{4},$ where $\frac{(n-1)^2}{4}$ is the bottom of the  $L^2$-spectrum of $-\Delta_{\mathbb{B}^n}.$ It is known from the results of Mancini and Sandeep \cite{MS} that under appropriate assumptions on $n,p$ and $\lambda$ there exists an optimizer, unique up to the hyperbolic isometries, attaining the best constant $S_{n,p,\lambda}(\mathbb{B}^n).$ In this article we investigate the quantitative gradient stability of the above inequality and the associated Euler-Lagrange equation locally around a bubble.
In our first result, we prove a 
\it sharp quantitative stability \rm of the above Poincar\'e-Sobolev inequality: if $u \in H^1(\mathbb{B}^n)$ almost optimizes the above inequality then $u$ is close to the manifold of optimizers in a quantitative way. Secondly, we prove  
the quantitative stability of its critical points: if $u \in H^1({\mathbb{B}^n})$ almost solves the Euler-Lagrange equation corresponding to the above Poincar\'e-Sobolev inequality and the energy of $u$ is close to the energy of an extremizer, then we derive the following quantitative bound 
 $$
 \hbox{dist}\left(u, \mathcal{Z}\right) \leq C(n,p,\lambda) \|\Delta_{\mathbb{B}^n} u + \lambda u + u^p \|_{H^{-1}(\mathbb{B}^n)},
 $$
 where $\mathcal{Z}$ denotes the manifold of non-negative finite energy solutions of
$ -\Delta_{\mathbb{B}^{n}}w-\lambda w=|w|^{p-1}w$. Our result generalizes the sharp quantitative stability of
Sobolev inequality in $\rn$ of Bianchi-Egnell \cite{BE} and Ciraolo-Figalli-Maggi \cite{CFM} to the Poincar\'{e}-Sobolev 
inequality on the hyperbolic space.

 Furthermore, combining our stability results and implementing {\it a refined smoothing estimates}, we prove a \it quantitative extinction rate \rm towards its basin of attraction of the solutions of the sub-critical fast diffusion flow for radial initial data. In another application, we derive sharp quantitative stability of the Hardy-Sobolev-Maz'ya inequalities for the class of \it functions which are symmetric in the component of singularity\rm.

\end{abstract}
\maketitle
\setcounter{tocdepth}{1}
\tableofcontents

\section{Introduction}

The sharp quantitative stability of various functional and geometric inequalities and its applications in the calculus of variations, differential geometry and diffusion flow and many more have generated a lot of interest in recent years. Some classical examples are the Euclidean isoperimetric inequality, Sobolev inequality, Gagliardo-Nirenberg-Sobolev inequality, and Caferalli-Kohn-Nirenberg inequality to name a few from the non-exhaustive list of references \cite{ISO1, ISO2, ISO3, ISO4, BE, BL, CI, CFMP, CFM, CF, DT, DO, FG, FMP, FMP-1, FN, BDNS, Ngu, Ruff, FZp, CKNWei, Weietal}. 
 In this article, we study the quantitative stability of the Poincar\'e-Sobolev inequality on the hyperbolic space, first in the spirit of Bianchi-Egnell's result \cite{BE} and then in the spirit of the results of Ciraolo, Figalli and Maggi \cite{CFM}. The classical Sobolev inequality in $\mathbb{R}^n,$ $n \geq 3,$ asserts that there exists a best constant $S(\R^n)$ such that
\begin{align}\label{Sobolev}
S(\R^{n}) \left(~\int \limits_{\R^{n}}|u|^{2^{\star}}{\rm d}x\right)^{2/2^{\star}} \leq \int \limits_{\R^{n}}|\nabla u|^{2}{\rm d}x
\end{align}
holds for all $u \in C_c^{\infty}(\R^n),$ where $2^{\star} = \frac{2n}{n-2}.$ By density argument, the inequality \eqref{Sobolev} continues to hold for all $u$ 
satisfying $\|\nabla u\|_{L^2(\R^n)} <\infty,$ and $\mathcal{L}^n(\{|u| >t\}) < \infty$ for every $t>0,$ where $\|\cdot \|_{L^2(\R^n)}$ denotes the $L^2$-norm and $\mathcal{L}^n$ denotes the Lebesgue measure on $\R^n.$ 
The value of $S(\R^n)$ is known and the equality cases in \eqref{Sobolev} have been well studied. The inequality \eqref{Sobolev} is invariant under the action of the conformal group of $\rn$ composed of the translations and dilations: for $z \in \rn, \mu > 0,$  and $u \in C_{c}^{\infty}(\rn)$ define $T_{z,\mu}(u) = \mu^{\frac{n-2}{2}}u(\mu(\cdot - z)),$ then both the norms in \eqref{Sobolev} are preserved. It is well known that \eqref{Sobolev}  is achieved if and only if $u$ is a constant multiple of the {\it Aubin-Talenti} bubbles \cite{AT, TAL}
\begin{align}\label{AT bubbles}
U[z,\mu](x) = (n(n-2))^{\frac{n-2}{4}}\mu^{\frac{n-2}{2}} \frac{1}{(1 + \mu^2|x-z|^2)^{\frac{n-2}{2}}}, \quad z \in \R^n, \mu>0. 
\end{align}
Therefore the set of equality cases in \eqref{Sobolev} forms a $(n+2)$ dimensional manifold. The choice of the dimensional constant in the definition of $U[z,\mu] = T_{z,\mu}(U[0,1])$ ensures that $U$ is a positive solution of the corresponding Euler-Lagrange equation 
\begin{align*}
-\Delta U  = U^{2^{\star} - 1} \ \mbox{in} \ \R^n.
\end{align*}
In addition, the inequality \eqref{Sobolev} fails to be true if the exponent $\tstar$ is replaced by any other exponent $p \neq \tstar.$ On the other hand, for a smooth bounded domain $\Omega$  of $\rn$ the $L^2$-spectral gap of the Dirichlet Laplacian and interpolation inequalities imply, for every $1 < p  \leq \tstar-1,$ there exists a best constant $S_p(\Omega)$ such that 
\begin{align*}
S_p(\Omega) \left(~\int \limits_{\Omega}|u|^{p+1}~{\rm d}x\right)^{\frac{2}{p+1}} \leq \int \limits_{\Omega}|\nabla u|^{2}~{\rm d}x
\end{align*}
holds for all $u \in C_c^{\infty}(\Omega).$ Hence by density it continues to hold to the closure $H_0^1(\Omega)$
with respect to the norm $\| \nabla u \|_{L^2(\Omega)}.$ It turns out that the constant $S_{\tstar-1}$ does not depend on the domain $\Omega$ and by conformal invariance of the norms $S_{\tstar-1}(\Omega) = S(\rn)$ holds for every sufficiently regular domain $\Omega.$ 

%to the Poincar\'e inequality $\|u\|_{L^2(\Omega)} \leq \lambda_1(\Omega)\|\nabla u\|_{L^2(\Omega)}$

\medskip

Let us now look at the counterpart of the Sobolev inequality in the hyperbolic space.
Throughout this article, we shall consider the Poincar\'e ball model of the hyperbolic space $\B^{n}$, $n \geq 3$ (see Section \ref{pre} for definition). The Euclidean unit ball  $\Be^{n}:= \{ x \in \R^{n}: |x|<1\}$  endowed with the metric $g_{\B^{n}}= \left(\frac{2}{1-|x|^{2}}\right)^{2}{g_{\text{Eucl}}}$ represents the Ball model for the hyperbolic $n$-space, where $g_{\text{Eucl}}$ is the Euclidean metric.

\medskip

We denote by $\dvg$ the volume element and $\nabla_{\B^{n}}$
the gradient vector field. Then the Sobolev inequality or rather Poincar\'e-Sobolev inequality  obtained by Mancini-Sandeep \cite{MS} takes the following form:

%\begin{theorem}\cite{MS}[Poincar\'e-Sobolev Inequality] For $n \geq 3$ and for every $\lambda \leq \frac{(n-1)^2}{4},$ there exists a best constant $S_{n, \lambda}(\B^{n})>0$ such that 
%\begin{align}\label{S-inq}
%S_{n, \lambda}(\B^{n})\left(~\int \limits_{\B^{n}}|u|^{2^{\star}}~{\rm d}v_{g_{\B^{n}}}\right)^{2/2^{\star}}\leq\int \limits_{\B^{n}}\left(|\nabla_{\B^{n}}u|^{2}-\lambda u^{2}\right){\rm d}v_{g_{\B^{n}}}
%\end{align}
%holds for all  $u\in C_c^{\infty}(\B^n).$
%\end{theorem}

\medskip 

\noindent
{\bf Poincar\'e-Sobolev inequality on the hyperbolic space.} Let $n \geq 3$ and $\lambda \leq \frac{(n-1)^2}{4}.$ Then for every $1 < p \leq \tstar-1$ there exists a best constant $S_{n,p, \lambda}(\B^{n})>0$ such that 
\begin{align}\label{S-inq}
S_{n, p, \lambda}(\B^{n})\left(~\int \limits_{\B^{n}}|u|^{p+1} \, \dvg \right)^{\frac{2}{p+1}}\leq\int \limits_{\B^{n}}\left(|\nabla_{\B^{n}}u|^{2}-\lambda u^{2}\right)\, \dvg
\end{align}
holds for all  $u\in C_c^{\infty}(\B^n).$ 

\medskip

Before we proceed further some comments about \eqref{S-inq} are in order:

\begin{itemize}
\item[(i)] Here $|\nabla_{\B^{n}}u|^{2}$ to be understood with respect to the inner product on the tangent space induced by the metric $g_{\B^{n}}$ (we refer to Section \ref{pre} for appropriate definition).

\medskip

\item[(ii)] By density \eqref{S-inq} continues  to hold for every $u \in H^1(\bn)$ defined by the closure of $C_c^{\infty}(\B^n)$ with respect to the norm $\|\nabla_{\bn}u\|_{L^2(\bn)} + \|u\|_{L^2(\bn)},$ where
$\|\cdot\|_{L^2(\bn)}$ denote the $L^2$-norm with respect to the volume measure.

\medskip 

\item[(iii)] The quantity $ \frac{(n-1)^{2}}{4} =:\lambda_1(\bn)$ is the $L^2$-bottom of the spectrum of $-\Delta_{\B^{n}}$ defined by
$$
\frac{(n-1)^{2}}{4} \, := \, \inf \limits_{u \in H^{1}(\B^{n})\setminus\{0\}}\frac{\displaystyle \int \limits_{\B^{n}}|\nabla_{\B^{n}}u|^{2} \dvg }{\displaystyle \int \limits_{\B^{n}}|u|^{2}\, \dvg},
$$
where $\Delta_{\B^{n}}$ is the Laplace-Beltrami operator. As a result, the restriction on $\lambda$ in \eqref{S-inq} can not be dispensed with.

\medskip

\item[(iv)]  As a consequence of (iii), $\|u\|_{\lambda}$ defined by 
\begin{align} \label{lambda norm}
\|u\|_{\lambda}^2 : = \int \limits_{\B^{n}}\left(|\nabla_{\B^{n}}u|^{2}-\lambda u^{2}\right) \, \dvg~\hbox{ for all } u\in H^1(\B^{n}),
\end{align}
is an equivalent $H^1(\mathbb{B}^n)$ norm for $\lambda < \frac{(n-1)^2}{4}.$ For $\lambda =\frac{(n-1)^2}{4}, \|u \|_{\lambda}$ is no longer an equivalent $H^1$-norm, and in principle one may have $\|u \|_{\lambda} < \infty$ with out being square integrable. Nonetheless, thanks to the above Poincar\'e-Sobolev inequality $\|u \|_{\lambda}$ defines a norm on $C_c^{\infty}(\bn)$ and by density \eqref{S-inq}  can be extended to the closure which is $H^1(\mathbb{B}^n)$ for $\lambda < \frac{(n-1)^2}{4}$ and a much larger space than $H^1(\mathbb{B}^n)$ if $\lambda = \frac{(n-1)^2}{4}.$

\end{itemize}

\medskip

Like the Euclidean case, the Poincar\'e-Sobolev inequality is invariant under the action of the conformal group of the ball model $\bn.$
In the Euclidean space, the translations and the dilations constitute the conformal group of $\rn$ and \eqref{Sobolev} is invariant under their actions. In the case of the hyperbolic space, the conformal group coincides with the isometry group and its elements are given by the \it hyperbolic translations \rm (see Section \ref{pre}).  As a result, both the critical and the subcritical Poincar\'e-Sobolev inequality are invariant under the action of the same conformal group (reminiscent of the translations in the Euclidean setting).

\medskip

In \cite{MS} Mancini and Sandeep also classified the equality cases in \eqref{S-inq}. We will only mention a weaker version of their result below. Suppose an optimizer $\boldsymbol{u}$ exists for \eqref{S-inq}, then taking advantage of the inhomogeneity, if we assume that $\|\boldsymbol{u}\|_{\lambda}^2 = S_{n, p,\lambda}(\B^n)^{\frac{p+1}{p-1}},$ then $\boldsymbol{u}$ solves 
\begin{align}\label{eq1}
 -\Delta_{\B^{n}}u-\lambda u=|u|^{p-1}u~\hbox{ for } u \in H^{1}(\B^{n}).
 \end{align}
  The authors studied the existence and uniqueness of the positive solutions of \eqref{eq1}. According to their results, in the subcritical case (that is $p < \tstar -1$) there always exists a solution but in the critical case (that is $p = \tstar -1$) the existence of positive solutions depend on the dimension as well as on the values of $\lambda.$ To state their result and to simplify the notational complexity of this article, we fix once and for all the assumptions on $\lambda$ and $p.$ We assume
\begin{align*}
\mbox{{(\bf H1)}} \ \ \ \ \ \ \ \ \ \ \ \ \ \ \ \ \ \ \
\begin{cases}
\lambda < \frac{(n-1)^2}{4}, \ \ \ \mbox{when} \ 1 < p < \frac{n+2}{n-2}, \ \mbox{and} \ n \geq 3, \\ \\
\frac{n(n-2)}{4} < \lambda < \frac{(n-1)^2}{4}, \ \ \ \mbox{when} \  p = \frac{n+2}{n-2}, \ \mbox{and} \ n \geq 4. \ \ \ \ \ \ \ \
\end{cases}
\end{align*}

%\begin{theorem}\cite{MS}[Existence of Extremals] \label{MSthm}
%Let $n \geq 4$ and  $\frac{n(n-2)}{4}< \lambda<\frac{(n-1)^{2}}{4}$. There exists a smooth  positive $\mathcal{U}\in H^{1}(\B^{n})$  which attains the Poincar\'e-Sobolev inequality \eqref{S-inq}.
%\begin{align}\label{S-inq2}
%S_{n, \lambda}(\B^{n})\left(~\int \limits_{\B^{n}}|u|^{\frac{2n}{n-2}}~{\rm d}v_{g_{\B^{n}}}\right)^{\frac{n-2}{n}}\leq\int \limits_{\B^{n}}\left(|\nabla_{\B^{n}}u|^{2}-\lambda u^{2}\right){\rm d}v_{g_{\B^{n}}}~\hbox{ for all } u\in H(\B^{n}),
%\end{align}
%Moreover these extremals are unique up to hyperbolic translations $\tau_{\xi}$, $\xi \in \B^{n}$ and multiplications by constants. 
%\end{theorem}

\begin{theorema}[Mancini-Sandeep \cite{MS}]\label{MSthm}
Let $n \geq 3$ and  $\lambda, p$ satisfies the hypothesis {\bf (H1)}. Then there exists a smooth,  positive, radially symmetric and decreasing $\mathcal{U}\in H^{1}(\B^{n})$  which attains the Poincar\'e-Sobolev inequality \eqref{S-inq}. 

Moreover, all the extremals of \eqref{S-inq} are unique up to hyperbolic translations $\tau_{\xi}$, $\xi \in \B^{n}$ and multiplications by constants. 
\end{theorema}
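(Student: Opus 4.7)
The plan is to split Theorem A into two parts: existence of a positive radial extremizer, and classification of all extremizers up to hyperbolic isometries.

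For existence I would start with hyperbolic Schwarz symmetrization about the origin. The P\'olya--Szeg\H o principle on $\B^n$ gives $\|u^\star\|_{L^q(\B^n)}=\|u\|_{L^q(\B^n)}$ for every $q\geq 1$ and $\|\nabla_{\B^n}u^\star\|_{L^2(\B^n)}\leq \|\nabla_{\B^n}u\|_{L^2(\B^n)}$, so the $\lambda$-norm only decreases and the variational problem for \eqref{S-inq} reduces to $H^1_{\mathrm{rad}}(\B^n)$. In the subcritical range $1<p<\tstar-1$, radial functions in $H^1(\B^n)$ enjoy a uniform pointwise decay as $d_{\B^n}(x,0)\to\infty$ because the volume element grows like $\sinh^{n-1}r$; this yields the compact embedding $H^1_{\mathrm{rad}}(\B^n)\hookrightarrow L^{p+1}(\B^n)$. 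A minimizing sequence for $\|u\|_\lambda^2/\|u\|_{L^{p+1}}^2$ is therefore strongly convergent in $L^{p+1}$, and weak lower semicontinuity of $\|\cdot\|_\lambda$ delivers a non-negative minimizer, which the strong maximum principle and elliptic regularity upgrade to a smooth positive $\mathcal U$.

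In the critical case $p=\tstar-1$ compactness fails because a minimizing sequence may concentrate on a shrinking geodesic ball into an Euclidean Aubin--Talenti bubble. The usual dichotomy from a Struwe-type profile decomposition adapted to $\B^n$ shows that loss of compactness only occurs along Aubin--Talenti profiles, at the Euclidean threshold $S(\R^n)$. Hence it suffices to establish the strict inequality $S_{n,\tstar-1,\lambda}(\B^n)<S(\R^n)$. I would prove this by inserting into \eqref{S-inq} a suitable cut-off of the rescaled Aubin--Talenti bubble $U[0,\mu]$ from \eqref{AT bubbles} with $\mu\to\infty$ (localized near the origin in the ball model, where the hyperbolic and Euclidean metrics agree to leading order) and carefully expanding the three integrals; the Euclidean Sobolev part reproduces $S(\R^n)$, while the lower-order correction coming from $-\lambda\int u^2\,\dvg$ dominates the curvature correction of the gradient precisely when $\lambda>\tfrac{n(n-2)}{4}$ in $n\geq 4$. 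That gives the required strict inequality and hence the existence of an extremizer.

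For the classification, let $u\in H^1(\B^n)$ be any positive extremizer, normalized so that it solves \eqref{eq1}. I would first show that $u$ is radially symmetric about some point $\xi\in\B^n$ by the moving plane method: on the ball model, hyperbolic hyperplanes are Euclidean spheres meeting $\partial\B^n$ orthogonally; sliding them from infinity and applying the strong maximum principle and Hopf lemma for $-\Delta_{\B^n}-\lambda$ (using $\lambda<\tfrac{(n-1)^2}{4}$ so that the linear operator is coercive, and the decay $u(x)\to 0$ as $d_{\B^n}(x,0)\to\infty$ coming from Moser iteration on \eqref{eq1}) forces symmetry across a critical hyperplane in every direction. The common intersection of all such critical hyperplanes is a single point $\xi$, so $u=\mathcal{U}\circ\tau_\xi$ for some radial $\mathcal U$. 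Finally, in geodesic polar coordinates about $\xi$ the equation becomes an ODE
\[
\mathcal U''(r)+(n-1)\coth(r)\,\mathcal U'(r)+\lambda\,\mathcal U(r)+\mathcal U(r)^p=0,\qquad \mathcal U'(0)=0,
\]
and uniqueness of a finite-energy positive decaying solution, up to a scalar, is obtained by a Sturm-comparison / Pohozaev-type argument (tracking the sign of the Wronskian of two putative solutions and integrating against the appropriate weight).

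The main obstacle is the critical case: both the strict inequality $S_{n,\tstar-1,\lambda}(\B^n)<S(\R^n)$ and the ODE uniqueness for $p=\tstar-1$ require delicate asymptotics, and the threshold $\lambda>\tfrac{n(n-2)}{4}$ is precisely what separates the existence regime from a Pohozaev-type non-existence regime in $n\geq 4$.
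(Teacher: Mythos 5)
This statement is quoted from Mancini--Sandeep \cite{MS}: the paper itself offers no proof of Theorem~\ref{MSthm}, and the only ingredient it reproves is the comparison of best constants in Proposition~\ref{PEUCS}. With that caveat, your existence outline follows the standard route and is essentially sound: hyperbolic symmetrization reduces \eqref{S-inq} to radial functions (the $\lambda$-norm is a Hilbert norm equivalent to $H^1$ for $\lambda<\frac{(n-1)^2}{4}$, so weak lower semicontinuity is available), the compact radial embedding handles $1<p<\tstar-1$, and in the critical case your bubble computation with threshold $\lambda>\frac{n(n-2)}{4}$, $n\geq 4$, is exactly the content of Proposition~\ref{PEUCS}, up to your choice of working directly in the hyperbolic metric rather than in the conformally equivalent Euclidean ball with potential $\tilde h_\lambda(x)=\frac{4\lambda-n(n-2)}{(1-|x|^2)^2}$ as in \cite{TT} and the paper; the concentration-compactness/Struwe-type dichotomy you invoke is likewise the accepted mechanism (cf.\ \cite{BS}).

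The classification half, however, contains a genuine gap. A minor point first: extremals of \eqref{S-inq} are not a priori positive, so before any symmetry argument you must pass to $|u|$ and use the strong maximum principle to exclude sign changes, since the theorem classifies \emph{all} extremals up to constants. The serious issue is the ODE step. Radial symmetry about some point via hyperbolic moving planes (with the decay estimates you mention) is plausible and is how the literature proceeds, but the uniqueness of the positive, decaying, finite-energy solution of $\calu''+(n-1)\coth(r)\,\calu'+\lambda\,\calu+\calu^{p}=0$ is the deep part of \cite{MS}: it is a Kwong-type uniqueness result, and for this class of problems such results require a delicate global analysis (monotone separation of trajectories, Pohozaev-type identities with carefully tuned weights, or shooting/energy arguments adapted to the $\coth$ weight), not a one-line Wronskian comparison. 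The sensitivity is real: the paper itself records that uniqueness is open (and possibly fails) for $n=2$ and $\lambda>\frac{2(p-1)}{(p+3)^2}$, so some quantitative input beyond Sturm comparison is indispensable. As written, this step is an assertion rather than an argument, so your proposal does not yet establish the ``moreover'' part of Theorem~\ref{MSthm}.
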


In Theorem \ref{MSthm}, the restrictions on $\lambda$ and the dimension $n$ are sharp, in the sense that if one or more assumptions are violated then the corresponding Euler-Lagrange equation does not admit any positive solution in $H^1(\B^n)$ (see \cite{MS} and Section \ref{key lem} below for more details). At the borderline case $\lambda = \frac{(n-1)^2}{4},$ a positive solution of \eqref{eq1} exists but fails to be square-integrable and hence not an element of $H^1(\bn).$ Moreover it has been shown in \cite{DG1} that the assumption on $\lambda$ in the critical case is sharp even for any solutions (not necessarily positive solution).  For a given $n,p$ and $\lambda$ as in Theorem \ref{MSthm}, we denote the unique extremal in \eqref{S-inq} by $\calu$ (which of course depends on $n,p,\lambda$) and satisfy the following properties
\begin{align} \label{calu}
\calu \in H^1(\bn), \calu > 0, \ \mbox{radially symmetric and decreasing},\, \|\calu\|_{\lambda}^2 = S_{n, p,\lambda}(\B^n)^{\frac{p+1}{p-1}}.
\end{align}

\medskip

In this article, we study the quantitative stability of the above Poincar\'e-Sobolev inequality \eqref{S-inq} by controlling the deviation of a given function from attaining the equality in \eqref{S-inq}, by its distance to the family of extremal functions, in the spirit of the results by Bianchi and Egnell \cite{BE}.  Many new challenges need to be addressed to obtain Bianchi-Egnell type results on the hyperbolic space.  Below we shall describe it.  According to Mancini-Sandeep \cite{MS}, the extremals for \eqref{S-inq} are unique up to hyperbolic translations $\tau_{\xi}$, $\xi \in \B^{n}$ and scalar multiplications, which allows us to define the $(n+1)$-dimensional manifold 
\begin{align*}
\mathcal{Z}_{0}:=\{c~\mathcal{U}\circ\tau_{\xi}:c\in \R\setminus\{0\}, \xi \in \B^{n}\}
\end{align*}
consisting of all the extremals for \eqref{S-inq}. 
\medskip

Our first result is the stability of Poincar\'e-Sobolev inequality in the spirit of Bianchi-Egnell \cite{BE}. Generally speaking, if $u \in H^1(\B^n)$ almost optimizes \eqref{S-inq} then $u$ is almost a constant multiple of $\mathcal{U} \circ \tau_{\xi}$ in a quantitative way. For that matter as in \cite{BE} we define the deficit in Poincar\'e-Sobolev inequality  by
\begin{align*}
\delta(u) &= \left(\|u\|_{\lambda}^2 - S_{n, p,\lambda}(\B^n)\|u\|_{L^{p+1}(\B^n)}^2\right)^{\frac{1}{2}} \notag \\
&=\left[~\int \limits_{\B^{n}}\left(|\nabla_{\B^{n}}u|^{2}-\lambda u^{2}\right) \dvg \, - \, S_{n, p,\lambda}(\B^{n})\left(~\int \limits_{\B^{n}}|u|^{p+1}~\dvg \right)^{\frac{2}{p+1}}\right]^{\frac{1}{2}},
\end{align*}

and the distance of $u$ from the manifold $\mathcal{Z}_0$ by
\begin{align} \label{distance}
dist\left(u, \mathcal{Z}_{0}\right):=\inf \limits_{c\in \R,\, \xi \in \B^{n}}\|u-c~\mathcal{U}\circ\tau_{\xi}\|_\lambda,
\end{align}
where $\| u \|_{\lambda}$ is defined by \eqref{lambda norm}. Note that the distance \eqref{distance} satisfies the scaling property:
$dist\left(\kappa u, \mathcal{Z}_{0}\right) = |\kappa|dist\left(u, \mathcal{Z}_{0}\right)$ for any $\kappa \neq 0.$

\medskip

Our first result asserts that the deficit functional controls the distance of $u$ from the manifold consisting of extremal functions. 

\medskip

\begin{theorem}\label{maintheorem-1}
Let $n \geq 3$ and $\lambda, p$ satisfies the hypothesis {\bf (H1)}. Then there exists a constant $C(n, p,\lambda)>0$ such that for all $u\in H^1(\B^{n})$ we have 
\begin{align*}
\hbox{dist}\left(u, \mathcal{Z}_{0}\right)\leq C(n, p, \lambda)\delta(u).
\end{align*}
\end{theorem}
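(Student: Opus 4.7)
I would follow the Bianchi--Egnell contradiction-and-expansion scheme, adapted to the hyperbolic setting by replacing Euclidean translations/dilations with the hyperbolic translation family $\{\tau_\xi\}_{\xi\in\bn}$. Suppose the conclusion fails along a sequence $\{u_k\}\subset H^{1}(\bn)$ with $\delta(u_k)/\hbox{dist}(u_k,\mathcal{Z}_0)\to 0$. Both sides are $1$-homogeneous under $u\mapsto\kappa u$, so I may normalise $\|u_k\|_{L^{p+1}(\bn)}=1$; then $\|u_k\|_\lambda^2=S_{n,p,\lambda}(\bn)+\delta(u_k)^2$. Since $\hbox{dist}(u_k,\mathcal{Z}_0)\leq\|u_k\|_\lambda$, the ratio going to zero forces $\delta(u_k)\to 0$, so $\{u_k\}$ is a minimising sequence for \eqref{S-inq}. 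Combining the isometric invariance of $\|\cdot\|_\lambda$ and $\|\cdot\|_{L^{p+1}}$ with a concentration--compactness/profile-decomposition argument on $\bn$ and with the uniqueness in Theorem~\ref{MSthm}, I extract hyperbolic translations $\tau_{\xi_k}$ such that, after replacing $u_k$ by $u_k\circ\tau_{\xi_k}$, we have $u_k\to c_\ast\calu$ strongly in $H^1(\bn)$ for some $c_\ast>0$.

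Next I place $u_k$ in a local coordinate form around $\mathcal{Z}_0$. Since $u_k$ lies close to $\mathcal{Z}_0$, the infimum in \eqref{distance} is attained at a pair $(c_k,\eta_k)\in\R\times\bn$ with $(c_k,\eta_k)\to(c_\ast,0)$; absorbing $\tau_{\eta_k}$ into $u_k$ once more, I write $u_k=c_k\calu+r_k$, where the first-order optimality conditions defining $(c_k,\eta_k)$ translate into the orthogonality relations, in the inner product $\langle\cdot,\cdot\rangle_\lambda$ associated with \eqref{lambda norm},
\begin{equation*}
\langle r_k,\calu\rangle_\lambda=0,\qquad \langle r_k,V_i\rangle_\lambda=0\quad(i=1,\dots,n),
\end{equation*}
where $V_i:=\tfrac{d}{dt}\big|_{t=0}(\calu\circ\tau_{te_i})$ spans the infinitesimal hyperbolic translations. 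Set $\eps_k:=\|r_k\|_\lambda=\hbox{dist}(u_k,\mathcal{Z}_0)\to 0$. A second-order Taylor expansion of $\|c_k\calu+r_k\|_\lambda^2$ and of $\|c_k\calu+r_k\|_{L^{p+1}(\bn)}^{2}$, combined with the Euler--Lagrange identity $-\Delta_{\bn}\calu-\lambda\calu=\calu^p$ and the normalisation $\|\calu\|_\lambda^2=S_{n,p,\lambda}\|\calu\|_{L^{p+1}}^2=\|\calu\|_{L^{p+1}}^{p+1}$ from \eqref{calu} to cancel the constant and linear contributions, yields
\begin{equation*}
\delta(u_k)^{2} \;=\; \|r_k\|_\lambda^{2} \;-\; p\int_{\bn}\calu^{\,p-1}\,r_k^{\,2}\,\dvg \;+\; o(\eps_k^{2}),
\end{equation*}
the $o(\eps_k^2)$ remainder being controlled via the Poincar\'e--Sobolev embedding $H^{1}(\bn)\hookrightarrow L^{p+1}(\bn)$ and $\eps_k\to 0$.

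\emph{The main obstacle} is a coercivity (spectral-gap) estimate for the linearised operator $\mathcal{L}:=-\Delta_{\bn}-\lambda-p\,\calu^{\,p-1}$: one needs $\alpha=\alpha(n,p,\lambda)>0$ such that
\begin{equation*}
\|r\|_\lambda^{2}\;-\;p\int_{\bn}\calu^{\,p-1}r^{2}\,\dvg\;\geq\;\alpha\,\|r\|_\lambda^{2}
\end{equation*}
for every $r\in H^1(\bn)$ satisfying the orthogonality conditions above. This is equivalent to the \emph{non-degeneracy} statement $\ker\mathcal{L}=\hbox{span}\{V_1,\dots,V_n\}$: because $\calu$ is smooth, radial and decays exponentially at hyperbolic infinity, $p\calu^{p-1}$ is a relatively compact perturbation of $-\Delta_{\bn}-\lambda$, whose essential spectrum lies strictly above zero (as $\lambda<\tfrac{(n-1)^2}{4}$), so $\mathcal{L}$ has purely discrete spectrum below that threshold. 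Decomposing $r$ into hyperbolic spherical harmonics centred at the origin reduces the eigenvalue problem to a family of singular Sturm--Liouville ODEs indexed by $\ell\geq 0$: the uniqueness of the positive radial ground state $\calu$ (Theorem~\ref{MSthm}) pins down the $\ell=0$ kernel as $\hbox{span}\{\calu\}$, explicit computation identifies the $\ell=1$ kernel with $\hbox{span}\{V_1,\dots,V_n\}$, and strict monotonicity of the first eigenvalue in $\ell$ ensures the remaining sectors are strictly positive. Inserting this spectral gap into the expansion above yields $\delta(u_k)^{2}\geq(\alpha+o(1))\eps_k^{2}$, contradicting $\delta(u_k)=o(\eps_k)$ and completing the proof.
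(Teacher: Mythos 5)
Your overall blueprint coincides with the paper's: a contradiction argument, reduction of an almost-optimizing sequence to a neighbourhood of $\mathcal{Z}_0$ by a non-quantitative compactness step, and a second-order expansion around the nearest point $c_0\,\calu\circ\tau_{b_0}$ with orthogonality conditions and a spectral gap for the linearised problem (your Taylor expansion $\delta(u_k)^2=\|r_k\|_\lambda^2-p\int_{\bn}\calu^{p-1}r_k^2\,\dvg+o(\eps_k^2)$ is correct and matches Lemma~\ref{almostlemma}). However, there are two genuine gaps. First, in the critical case $p=\tstar-1$ (which hypothesis {\bf (H1)} allows for $n\geq 4$), the step ``profile decomposition $+$ uniqueness in Theorem~\ref{MSthm} $\Rightarrow$ $u_k\circ\tau_{\xi_k}\to c_*\calu$ strongly'' is not justified as stated: the Palais--Smale/profile decomposition on $\bn$ contains, besides hyperbolic bubbles $\calu\circ\tau_\xi$, localized Aubin--Talenti bubbles, and a minimizing sequence could a priori concentrate on one of these. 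Ruling this out requires the strict comparison of best constants $S_{n,p,\lambda}(\bn)<S(\R^n)$ for $\lambda>\frac{n(n-2)}{4}$ (Proposition~\ref{PEUCS} in the paper, feeding into Lemma~\ref{minimisationlemma}); your proposal never invokes anything of this kind, and indeed for $n=3$ the constants coincide, which is exactly why the critical case is excluded there.

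Second, the spectral-gap step is both imprecise and incomplete. The coercivity $\|r\|_\lambda^2-p\int_{\bn}\calu^{p-1}r^2\,\dvg\geq\alpha\|r\|_\lambda^2$ on the orthogonal complement of $\{\calu,V_1,\dots,V_n\}$ is \emph{strictly stronger} than the non-degeneracy statement $\ker\mathcal{L}=\mathrm{span}\{V_i\}$: if the weighted problem $-\Delta_{\bn}\Psi-\lambda\Psi=\mu\,\calu^{p-1}\Psi$ had an eigenvalue $\mu\in(1,p)$ with eigenfunction $\psi\perp\calu,V_i$, coercivity would fail while the kernel could still be exactly $\mathrm{span}\{V_i\}$; so the claimed equivalence is false and one must control the whole spectrum below and at level $p$, plus a gap above $p$. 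Moreover, your treatment of the radial sector is not correct: $\calu$ is not in $\ker\mathcal{L}$ (indeed $\mathcal{L}\calu=(1-p)\calu^p\neq 0$; it is the first eigenfunction of the weighted problem with eigenvalue $1$), and the assertion that ``uniqueness of the positive radial ground state pins down the $\ell=0$ kernel'' is unjustified --- uniqueness of positive solutions does not by itself exclude radial eigenvalues in $(1,p]$. In the paper this is precisely where the substantive input enters: Proposition~\ref{eigen value lemma} establishes $\mu_1=1$ (simple, eigenfunction $\calu$), $\mu_2=p$ with eigenspace $\mathrm{span}\{\Phi_i\}$, and $\mu_3>p$, using the compact embedding $H^1(\bn)\hookrightarrow L^2_{\calu^{p-1}}(\bn)$, a reflection argument on the half-space $\bn_+$ showing any eigenfunction with $\mu<p$ must be radial, and the radial rigidity/non-degeneracy theorems of Ganguly--Sandeep \cite{DG2}. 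Your sketch would need these (or equivalent ODE analysis) supplied explicitly to close the argument.
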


\medskip

\medskip

Before we proceed further let us briefly illustrate the necessity of the hypothesis {\bf (H1)}. Precisely why the assumptions on the dimension and  $\lambda$ in Theorem \ref{maintheorem-1} are being imposed in the critical case. Thanks to \eqref{S-inq}, the solutions to \eqref{eq1} are the critical points of 
\begin{align} \label{ilambda}
I_{\lambda}(u)= \frac{1}{2}\int \limits_{\B^{n}}\left(|\nabla_{\B^{n}}u|^{2}-\lambda u^{2}\right)~\dvg-\frac{1}{p+1} \int \limits_{\B^{n}}|u|^{p+1}~\dvg
\end{align}
defined on $H^1(\bn)$ and hence variational techniques apply to the energy functional $I_\lambda$. Therefore, one needs to know whether the following Palais-Smale condition holds: if for a sequence $\{u_k\}$ in $H^1(\bn),$ the energy $I_{\lambda}(u_k) \rightarrow d$ and the differential of the energy functional $I_{\lambda}^{\prime}(u_k) \rightarrow 0$ in $H^{-1}(\bn),$ then $\{u_k\}$ is precompact in $H^1(\bn).$ The hyperbolic space being infinite volume we need to impose further assumptions on the sequence $\{u_k\}$ to be able to conclude its compactness. Indeed,  let $u \in H^{1}\left(\mathbb{B}^{n}\right)$ and 
$b_k \in \bn$ such that $b_k \rightarrow \infty$ in the hyperbolic distance and let $\tau_{b_k}$ be the hyperbolic translation given by \eqref{hyperbolictranslation} such that $\tau_{b_k}(0) = b_k.$ Define $u_k = u \circ \tau_{b_k},$
then $\|u_k\|_{H^{1}\left(\mathbb{B}^{n}\right)} = \|u\|_{H^{1}\left(\mathbb{B}^{n}\right)}$ but $u_k \rightharpoonup 0$ weakly in $H^1(\bn).$ This shows that the embedding $H^1(\bn) \hookrightarrow L^p(\bn)$ is not compact for any $p  \leq \frac{2n}{n-2}$. At the sub-critical case $1<p<\tstar - 1,$ thanks to the Relich compactness theorem the problem is only at infinity. Indeed the class of radially symmetric and decreasing functions of $H^1(\bn)$ compactly embedded in $L^p(\bn)$ for every $1<p<\tstar - 1$ (\cite[Theorem 3.1]{BS}). However, in the critical case $p = \tstar -1$ the problem is both locally and at infinity and hence one can not exclude the possibility of concentration. The seminal work of Br\'{e}zis and Nirenberg \cite{BN}  gave a positive answer to this question which asserts that in a smooth bounded domain of the Euclidean case, compactness holds if the sequence exhibits small energy, a result in the spirit of Aubin \cite{AYConj}. The same argument used in \cite{BN} leads to the sequence is precompact in $H^1$-topology, whose credit the authors give to F. Browder. In particular, for the Euclidean ball, $\Be^n$ and the corresponding energy functional 
\begin{align*}
I_{\mu, \Be^n}(u) = \frac{1}{2}\int \limits_{\Be^{n}}\left(|\nabla u|^{2}+\mu u^{2}\right){\rm dx}-\frac{1}{p+1} \int \limits_{\Be^{n}}|u|^{p+1} {\rm dx}, \quad \mu >0 \ \mbox{a constant}
\end{align*}
the compactness prevails below the energy level $\frac{1}{n}S(\rn)^{\frac{n}{2}},$ which is precisely the energy of Aubin-Talenti bubbles defined in \eqref{AT bubbles}, provided $\mu >0$. Moreover, inspired by the works of  Aubin on Yamabe conjecture \cite{AYConj} they showed that the mountain pass energy always stays strictly below $\frac{1}{n}S(\rn)^{\frac{n}{2}}$ provided $n \geq 4$ and $0<\mu< \lambda_1(\Be^n),$ where $\lambda_1(\Be^n)$ is the first eigenvalue of the Dirichlet Laplacian on $\Be^n.$
In dimension $3,$ one needs to further impose $\mu > \frac{1}{4}\lambda_1(\Be^n).$ On the other hand, Pohozaev's identity shows that such statements are false for $\mu \leq 0$.

In addition, in a general smooth domain $\Omega$ the result of Brezis-Nirenberg \cite{BN} shows that for dimension $n \geq 4$ and $0 < \lambda < \lambda_1(\Omega)$
 the geometry of the domain playing no role
whatsoever for existence of solutions. On the other hand, Druet \cite{Druet} and Druet-Laurain \cite{DrLa} showed that the geometry
plays a role in dimension $n = 3,$  by proving non-existence of solutions when $\Omega$ is
star-shaped and $\lambda >0$ small (with no
a priori energy bound). Another point of view is that for $n = 3,$ the nonexistence
of solutions persists under small perturbations, but it does not for $n \geq 4$, in other words, the
Pohozaev obstruction is stable only for $n = 3.$

 The mentioned compactness result only gives a crude description of the Palais-Smale decomposition of the sequence. The subsequent seminal work of Struwe \cite{MS2} provides the complete description: For a sequence $u_{k} \in H_0^1(\Be^n)$ if $I_{\mu, \Be^n}(u_k) \rightarrow d, I_{\mu, \Be^n}^{\prime}(u_k) \rightarrow 0$ in $H^{-1}(\Be^n)$ then up to a subsequence $u_k$ converges in $H^1$-topology to a sum of Aubin-Talenti bubbles and a solution to the corresponding Euler-Lagrange equation.

%with $L\frac{1}{n}S(\rn)^{\frac{n}{2}} \leq d < (L+1)\frac{1}{n}S(\rn)^{\frac{n}{2}},$ then up to a subsequence $u_k = u_0 + LU[z_k, \mu_k]$ in $H_0^1(\Be^n).$
%the problem becomes more challenging and the restriction on the dimension $n \geq 4$ is necessary to exclude the possibility of Aubin-Talenti bubbles in a minimizing sequence. To be precise,

\medskip

\noindent
\subsection{Conformal Change of Metric.}
Let us now translate the result of $\Be^n$ in the hyperbolic space $\bn$. We know that the conformal Laplacian or the Yamabe operator $P_{1,\bn} := -\Delta_{\bn} + \frac{(n-2)}{4(n-1)}R_{\bn} = -\Delta_{\bn} - \frac{n(n-2)}{4}$ is the first order conformal invariant operator where $R_{\bn}:= -n(n-1)$ is the scalar curvature  with respect to the metric $g_{\bn}$. It means that if  
$\tilde g = e^{2\psi}g$ is a conformal metric then $P_{1,\tilde g} (u) = e^{-(\frac{n}{2} +1)\psi} P_{1,\bn}(e^{(\frac{n}{2}-1)\psi}u)$ for every smooth function $u.$
The Poincar\'e metric being conformal to the Euclidean metric with $\psi (x)= \ln \left(\frac{1-|x|^2}{2}\right)$ we can 
transform \eqref{eq1} in the Euclidean space as follows:
 Let $u$ be a solution to \eqref{eq1}  with $p = \tstar -1.$  Set $\varphi:= e^{-(\frac{n}{2}-1)\psi}=\left(\frac{2}{1-|x|^{2}}\right)^{\frac{n-2}{2}}$, then $v :=\varphi u$ solves
 \begin{align}\label{eq2}
 -\Delta v- a(x)v=|v|^{\tstar-2}v, \ \ \ v \in H^{1}_{0}(\Be^{n}),
 \end{align} 
 where $a(x) = \frac{4\lambda-n(n-2)}{\left(1-|x|^{2}\right)^{2}}$ and $H^{1}_{0}(\Be^{n})$ is the Sobolev space on $\Be^n$ characterized by zero trace on the boundary $\partial\Be^n.$  Note that $a(x) > 0$ in $\Be^n$ whenever $\lambda > \frac{n(n-2)}{4}.$
 
 \medskip

 The equation \eqref{eq2} falls right under the framework of the works of Br\'{e}zis and Nirenberg \cite{BN}, subject to the delicate issue of singularity at the boundary in lower-order term $ a(x)v$. A priori there is no reason to believe that \eqref{eq2} would satisfy the Palais-Smale criteria. Should the statement like Theorem \ref{maintheorem-1} hold one must first ask if a sequence $\{u_k\}$  almost optimizes \eqref{S-inq} (i.e., $\delta(u_k) \rightarrow 0$) then is it possible that the sequence may contain Aubin-Talenti bubbles?
 Since the Aubin-Talenti bubbles have energy $\frac{1}{n}S(\rn)^{\frac{n}{2}}$, to exclude the possibility of Aubin-Talenti bubbles we need to ensure that the almost optimizing sequence $\{u_k\}$ contains lower energy compared to the Aubin-Talenti bubbles. It turns out this is indeed the case for $n \geq 4,$ and $\lambda > \frac{n(n-2)}{4}$ and the corresponding best Poincar\'e-Sobolev constant $S_{n, p, \lambda}(\B^n), \, p=\tstar-1$ is strictly less than $S(\rn)$ \cite{TT}. For the convenience of the readers, we recollect a proof of this result in Proposition~\ref{PEUCS} which follows the ideas of \cite{TT} and is based on testing a localized Aubin-Talenti bubble 
against \eqref{S-inq}. On the other hand, note that in dimension $3,$ the best Poincar\'e-Sobolev constant and Sobolev constant coincide \cite{BFLo}. Indeed, in this case $\lambda_1(\Be^3) = \pi^2$ and $\lambda_1(\mathbb{B}^3) = 1$ and hence one can not exclude the possibility of Aubin-Talenti bubbles in almost optimizing sequence.

\medskip

Next, we turn our attention to the stability from the Euler-Lagrange equation point of view as in \cite{CFM}. Recall that the solutions of \eqref{eq1} can be obtained as the critical points of $I_{\lambda},$ or in other words, $u_0$ solves \eqref{eq1} if and only if $I_{\lambda}^{\prime}(u_0) = 0$ in $H^{-1}(\B^n).$ Recall that we assume the optimizer $\mathcal{U}$ obtained in Theorem \ref{MSthm} satisfies $\|\calu\|_{\lambda}^2 = S_{n,p,\lambda}(\bn)^{\frac{p+1}{p-1}}.$ As a result $\calu$ has the energy $I_{\lambda}(\mathcal{U}) = \frac{p-1}{2(p+1)}S_{n,p,\lambda}(\B^n)^{\frac{p+1}{p-1}}.$
As in the seminal work of Struwe \cite{MS2}, in the hyperbolic setting Bhakta and Sandeep \cite{BS} have proved a non-quantitative version of the stability of the Euler-Lagrange equation \eqref{eq1}:

\begin{theorema}[Bhakta-Sandeep \cite{BS}]\label{t:BS1}
Let $n \geq 3$ and $\lambda,p$ satisfies the hypothesis {\bf (H1)}. Let $(u_{\alpha})$ be a Palais-Smale sequence of non-negative functions for $I_{\lambda}$ such that $\lim \limits_{\alpha\to\infty}I_{\lambda}(u_{\alpha})=\frac{p-1}{2(p+1)}S_{n,p,\lambda}(\B^n)^{\frac{p+1}{p-1}}$, then there exists a sequence of points $(\xi_{\alpha})$ in $\B^{n}$ such that as $\alpha \to \infty$
$$ u_{\alpha}=\calu\circ \tau_{\xi_{\alpha}}+o(1)~\hbox{ in } H^{1}(\B^{n}).$$
\end{theorema}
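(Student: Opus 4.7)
The plan is to carry out a one-bubble profile decomposition in the spirit of Struwe, exploiting the fact that the prescribed energy level $d := \frac{p-1}{2(p+1)} S_{n,p,\lambda}(\bn)^{\frac{p+1}{p-1}}$ is exactly the ground-state energy: by Theorem~\ref{MSthm} every nontrivial critical point of $I_\lambda$ in $H^1(\bn)$ carries energy at least $d$, so any decomposition of an almost-critical sequence at level $d$ must terminate after producing at most one bubble with zero residue. First I would establish $H^1$-boundedness of $(u_\alpha)$ by combining $I_\lambda(u_\alpha) \to d$ with $\langle I_\lambda'(u_\alpha), u_\alpha \rangle = o(\|u_\alpha\|_\lambda)$ to get
\[
\left(\tfrac{1}{2} - \tfrac{1}{p+1}\right)\|u_\alpha\|_\lambda^2 \,=\, I_\lambda(u_\alpha) - \tfrac{1}{p+1}\langle I_\lambda'(u_\alpha), u_\alpha\rangle \,=\, d + o(1) + o(\|u_\alpha\|_\lambda),
\]
so $\|u_\alpha\|_\lambda$ stays bounded. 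Passing to a subsequence, $u_\alpha \rightharpoonup u_\infty$ weakly in $H^1(\bn)$ and pointwise a.e., and the weak limit is nonnegative and satisfies $-\Delta_{\bn} u_\infty - \lambda u_\infty = u_\infty^p$. By elliptic regularity and the strong maximum principle, either $u_\infty \equiv 0$ or $u_\infty > 0$ throughout $\bn$.

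Suppose first $u_\infty \not\equiv 0$. Then Theorem~\ref{MSthm} forces $u_\infty = \calu \circ \tau_{\xi_0}$ for some $\xi_0 \in \bn$, and in particular $I_\lambda(u_\infty) = d$. Setting $v_\alpha := u_\alpha - u_\infty$, weak convergence of $\|\cdot\|_\lambda^2$ together with the Brezis-Lieb lemma at exponent $p+1$ yields the profile splittings
\[
I_\lambda(u_\alpha) \,=\, I_\lambda(u_\infty) + I_\lambda(v_\alpha) + o(1), \qquad I_\lambda'(v_\alpha) \to 0 \text{ in } H^{-1}(\bn),
\]
so $I_\lambda(v_\alpha) \to 0$ and $\langle I_\lambda'(v_\alpha), v_\alpha \rangle = o(1)$, i.e.\ $\|v_\alpha\|_\lambda^2 = \|v_\alpha\|_{L^{p+1}}^{p+1} + o(1)$. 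Plugging this Nehari identity into $I_\lambda(v_\alpha) \to 0$ produces $\left(\tfrac{1}{2} - \tfrac{1}{p+1}\right)\|v_\alpha\|_\lambda^2 \to 0$, which forces $u_\alpha \to \calu \circ \tau_{\xi_0}$ strongly in $H^1(\bn)$, as desired.

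If instead $u_\infty \equiv 0$, the full $L^{p+1}$-mass of $(u_\alpha)$ (which remains bounded below by a positive constant since $I_\lambda(u_\alpha) \to d > 0$) must either escape to infinity in the hyperbolic distance or concentrate at a single point (the latter only possible in the critical case $p = \tstar - 1$). Local concentration is ruled out by the strict gap $S_{n,p,\lambda}(\bn) < S(\rn)$ of Proposition~\ref{PEUCS}: a single Aubin-Talenti concentration would carry energy $\tfrac{1}{n} S(\rn)^{n/2} > d$, contradicting the prescribed limit. So the concentration is purely at infinity, and I would select $\xi_\alpha \in \bn$ as a maximizer of the Lions-type concentration function $\xi \mapsto \int_{B_\rho(\xi)} |u_\alpha|^{p+1}\,\dvg$ for a fixed hyperbolic radius $\rho$ large enough that this maximum remains bounded below by a positive constant (a standard vanishing-lemma argument using Poincar\'e-Sobolev and the non-vanishing of $\|u_\alpha\|_{L^{p+1}}$). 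Define $\tilde u_\alpha := u_\alpha \circ \tau_{\xi_\alpha}$; the isometric invariance of all three terms of $I_\lambda$ implies that $(\tilde u_\alpha)$ is again a non-negative Palais-Smale sequence at level $d$, and by construction its weak limit is nontrivial, so the previous paragraph applies to $(\tilde u_\alpha)$ and gives $\tilde u_\alpha \to \calu \circ \tau_{\xi_0}$ in $H^1(\bn)$ for some $\xi_0 \in \bn$. Undoing the translation, $u_\alpha = \calu \circ \tau_{\xi_0} \circ \tau_{\xi_\alpha}^{-1} + o(1)$, and rewriting the composition of two isometries as a single hyperbolic translation $\tau_{\zeta_\alpha}$ yields the claimed form.

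The chief obstacle is the critical case $p = \tstar - 1$, where one must exclude local bubbling of the almost-critical sequence; this is exactly where the dimensional restriction $n \geq 4$ and the lower bound $\lambda > \tfrac{n(n-2)}{4}$ in \textbf{(H1)} enter, through the strict gap $S_{n,p,\lambda}(\bn) < S(\rn)$. Without this strict gap, Aubin-Talenti bubbles could appear in the decomposition and spoil the single-bubble conclusion (in dimension $3$ the two constants in fact coincide). A secondary technical point is verifying the Brezis-Lieb splitting for the critical nonlinearity $|u|^{p-1}u$ on the Poincar\'e ball, which can either be reduced to the Euclidean case via the conformal change of variables recalled in the excerpt, or handled directly through standard pointwise convexity estimates together with the equivalence of $\|\cdot\|_\lambda$ with the usual $H^1(\bn)$-norm.
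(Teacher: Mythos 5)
Your proposal is correct in outline and rests on the same two pillars as the paper's own (very short) proof: a Struwe-type Palais--Smale decomposition and the strict gap $S_{n,p,\lambda}(\bn)<S(\rn)$ of Proposition~\ref{PEUCS}, which makes the level $d=\frac{p-1}{2(p+1)}S_{n,p,\lambda}(\bn)^{\frac{p+1}{p-1}}$ strictly below the Aubin--Talenti energy $\frac{1}{n}S(\rn)^{n/2}$ in the critical case. The difference is one of packaging: the paper simply invokes the full profile decomposition of \cite[Theorem~3.3]{BS} (superposition of hyperbolic bubbles $\calu\circ\tau_{\xi}$ and localized Aubin--Talenti bubbles) and lets Proposition~\ref{PEUCS} delete the Euclidean bubbles at this level, whereas you re-derive the one-bubble case by hand. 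Your nonzero-weak-limit case is complete and clean (boundedness, uniqueness of positive solutions from Theorem~\ref{MSthm}, Br\'ezis--Lieb splitting plus the Nehari identity forcing the residue to vanish), and this part buys a self-contained argument the paper does not spell out. The soft spots are precisely the steps that \cite[Theorem~3.3]{BS} packages: when the weak limit vanishes and $p=\tstar-1$, Lions' vanishing lemma only covers subcritical exponents, so non-vanishing of the concentration function does not by itself guarantee that the translated sequence $u_\alpha\circ\tau_{\xi_\alpha}$ has a nontrivial weak limit --- interior concentration must be excluded, and ruling it out (and justifying that any such concentration costs at least $\frac{1}{n}S(\rn)^{n/2}$) requires the rescaling/blow-up extraction of an Aubin--Talenti profile, i.e.\ essentially the decomposition theorem you set out to avoid citing. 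Finally, a minor point: $\tau_{\xi_0}\circ\tau_{\xi_\alpha}^{-1}$ is in general a hyperbolic translation composed with a rotation, not a single translation; since $\calu$ is radial the conclusion $u_\alpha=\calu\circ\tau_{\zeta_\alpha}+o(1)$ still follows, but it should be justified through the radiality of $\calu$ rather than by asserting the composition is itself some $\tau_{\zeta_\alpha}$.
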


The above theorem directly follows from Proposition~\ref{PEUCS} (see Section 3) and the Palais-Smale decomposition results in \cite[Theorem 3.3]{BS}.  

\begin{remark}
{\rm
It is worth noticing from \cite[Theorem 3.3]{BS} that the profile decomposition of the Palais-Smale sequence at the arbitrary energy level is essentially a superposition 
of two types of sequences $\calu \circ \tau_{\xi},$ which we call \it hyperbolic bubble \rm and a \it localized Aubin-Talenti bubble. \rm We shall prove in 
Proposition~\ref{PEUCS} that for $p=\tstar-1$ and at the energy level specified as in Theorem~\ref{t:BS1}, the Aubin-Talenti bubbles are absent in their Palais-Smale decomposition.}
\end{remark}

\medskip

Our next result concerned the quantitative version of Theorem \ref{t:BS1} in the spirit of the results of Ciraolo-Figalli-Maggi  \cite{CFM}. Namely, if a non-negative function $u \in H^1(\B^n)$ almost solves equation \eqref{eq1} then $u$ is close to the $\mathcal{U} \circ \tau_{\xi}$ for some $\xi \in \B^n$ in a quantitative way.
\begin{theorem}\label{maintheorem-2}
Let $n \geq 3$  and $\lambda,p$ satisfies the hypothesis {\bf (H1)}. Let $(u_{\alpha})$ be a Palais-Smale sequence of non-negative functions for $I_{\lambda}$ such that $\lim \limits_{\alpha\to\infty}I_{\lambda}(u_{\alpha})=\frac{p-1}{2(p+1)}S_{n,p,\lambda}(\B^n)^{\frac{p+1}{p-1}}$, then there exists a  constant $C(n, p,\lambda)>0$ such that 
\begin{align*}
\hbox{dist}\left(u_{\alpha}, \mathcal{Z}\right)\leq C(n, p, \lambda) \|I'_{\lambda}(u_{\alpha})\|_{H^{-1}(\B^{n})}
\end{align*}
where $\mathcal{Z}$ denotes the manifold of non-negative solutions of \eqref{eq1}
\begin{align*}
\mathcal{Z}:=\{\mathcal{U}\circ\tau_{\xi}: \xi \in \B^{n}\}.
\end{align*}
and $\hbox{dist}\left(\cdot, \mathcal{Z}\right) = \inf_{\xi \in \bn}\| \cdot \ - \ \calu \circ \tau_{\xi}\|_{\lambda}.$ 
\end{theorem}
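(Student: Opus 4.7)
The plan is a Lyapunov--Schmidt type reduction about the closest bubble on $\mathcal{Z}$. Theorem~\ref{t:BS1} provides $\xi_\alpha \in \bn$ with $u_\alpha - \calu\circ\tau_{\xi_\alpha}\to 0$ in $H^1(\bn)$, and a standard compactness argument (using that $\calu\circ\tau_\xi\rightharpoonup 0$ as $\xi$ escapes every compact set in hyperbolic distance, while $\|u_\alpha-\calu\circ\tau_{\xi_\alpha}\|_\lambda$ is already small) ensures that for $\alpha$ large the continuous map $\xi\mapsto \|u_\alpha - \calu\circ\tau_\xi\|_\lambda$ attains its infimum over $\bn$ at some $\xi_\alpha^{\star}$. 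Setting $z_\alpha:=\calu\circ\tau_{\xi_\alpha^{\star}}$ and $r_\alpha:=u_\alpha-z_\alpha$, one has $\|r_\alpha\|_\lambda=\mathrm{dist}(u_\alpha,\mathcal{Z})\to 0$, and the first-order minimality condition yields the $n$ orthogonality relations
\begin{equation*}
\langle r_\alpha, T\rangle_\lambda = 0 \quad \text{for every } T\in T_{z_\alpha}\mathcal{Z},
\end{equation*}
where $\langle\cdot,\cdot\rangle_\lambda$ is the inner product associated with $\|\cdot\|_\lambda$ and $T_{z_\alpha}\mathcal{Z}$ is the $n$-dimensional tangent space to $\mathcal{Z}$ at $z_\alpha$, spanned by the $\xi$-derivatives of $\calu\circ\tau_\xi$ at $\xi=\xi_\alpha^{\star}$.

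Since $I'_\lambda(z_\alpha)=0$, a pointwise Taylor expansion of $t\mapsto|t|^{p-1}t$ produces the splitting
\begin{equation*}
I'_\lambda(u_\alpha) \;=\; L_{z_\alpha} r_\alpha + \mathcal{N}_\alpha(r_\alpha) \quad \text{in } H^{-1}(\bn),
\end{equation*}
with $L_{z_\alpha} v := -\Delta_{\bn} v - \lambda v - p\, z_\alpha^{p-1} v$ and $\mathcal{N}_\alpha(r) := z_\alpha^p + p z_\alpha^{p-1} r - |z_\alpha+r|^{p-1}(z_\alpha+r)$. The elementary pointwise bound $\bigl||a+b|^{p-1}(a+b) - a^p - p\, a^{p-1} b\bigr|\lesssim \min(|a|^{p-2} b^{2}, |b|^{p}) + |b|^{p}$ (with the obvious modification when $1<p<2$), combined with the Poincar\'e--Sobolev embedding $H^1(\bn)\hookrightarrow L^{p+1}(\bn)$ and the qualitative fact $\|r_\alpha\|_\lambda\to 0$, gives the crucial estimate
\begin{equation*}
\|\mathcal{N}_\alpha(r_\alpha)\|_{H^{-1}(\bn)} \;=\; o(\|r_\alpha\|_\lambda) \quad \text{as } \alpha\to\infty.
\end{equation*}

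The core analytic input is a non-degeneracy lemma: $\ker L_\calu = T_\calu\mathcal{Z}$ in $H^1(\bn)$. In the subcritical regime this is obtained by a spherical-harmonic decomposition adapted to the hyperbolic radial symmetry of $\calu$, reducing each angular sector to a Sturm--Liouville problem and invoking the uniqueness statement of Theorem~\ref{MSthm} to rule out any extra radial kernel. In the critical case the conformal substitution $v=(2/(1-|x|^2))^{(n-2)/2}u$ recasts $L_\calu$ as a linearized Br\'ezis--Nirenberg-type operator on $\Be^n$ with the boundary-singular potential $a(x)=(4\lambda-n(n-2))/(1-|x|^2)^2$; here the assumption $\lambda>n(n-2)/4$ from \textbf{(H1)}, and the resulting strict gap $S_{n,\tstar-1,\lambda}(\bn)<S(\rn)$ of Proposition~\ref{PEUCS}, is precisely what excludes the Aubin--Talenti dilation direction (which is not a hyperbolic isometry) from producing a spurious kernel element. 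As $p\, z_\alpha^{p-1}$ is a relatively compact multiplicative perturbation of $-\Delta_{\bn}-\lambda$, Fredholm theory together with this non-degeneracy delivers a uniform invertibility bound
\begin{equation*}
\|v\|_\lambda \;\le\; C_0\,\|L_{z_\alpha} v\|_{H^{-1}(\bn)} \quad \text{for every } v\in H^1(\bn) \text{ with } \langle v,T\rangle_\lambda = 0 \ \text{for all } T\in T_{z_\alpha}\mathcal{Z},
\end{equation*}
the constant $C_0$ being independent of $\alpha$ by isometric invariance of $L$ under $\tau_\xi$. Applying this to $v=r_\alpha$ and combining with the expansion of $I'_\lambda(u_\alpha)$ gives $\|r_\alpha\|_\lambda \le C_0\|I'_\lambda(u_\alpha)\|_{H^{-1}(\bn)} + o(\|r_\alpha\|_\lambda)$, and absorbing the lower-order term for $\alpha$ large yields the theorem. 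The main obstacle is the non-degeneracy lemma itself in the critical regime, where one must simultaneously control the boundary singularity of the potential $a(x)$ on $\Be^n$ and rule out Aubin--Talenti-type kernel elements; the latter is where Proposition~\ref{PEUCS} enters in an essential way.
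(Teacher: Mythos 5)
Your proposal is correct in outline, but it follows a genuinely different route from the paper. The paper minimizes $\|u_\alpha-c\,\calu\circ\tau_b\|_\lambda$ over the $(n+1)$-parameter family $\mathcal{Z}_0$ (scalar $c$ included), so that the remainder $v=u_\alpha-c_0\calu_{b_0}$ is orthogonal in $L^2_{\calu_{b_0}^{p-1}}$ to both the first and second eigenspaces of $\mathcal{L}=(-\Delta_{\bn}-\lambda)/\calu_{b_0}^{p-1}$; the spectral gap $\mu_3>\mu_2=p$ of Proposition~\ref{eigen value lemma} then gives the coercivity $\|v\|_\lambda^2\geq\mu_3\int v^2\calu_{b_0}^{p-1}\,\dvg$, the nonlinear term $\int(c_0\calu_{b_0}+v)^pv\,\dvg$ is absorbed using $\mu_3>p$ and $|c_0-1|=o(1)$, and a separate argument (testing $I'_\lambda(u_\alpha)$ against $u_\alpha$) upgrades $|c_0-1|$ to $O(\|I'_\lambda(u_\alpha)\|_{H^{-1}})$. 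You instead project only onto the $n$-dimensional manifold $\mathcal{Z}$, keep the full linearized operator $L_{z_\alpha}=-\Delta_{\bn}-\lambda-p\,z_\alpha^{p-1}$, and invoke a quantitative invertibility bound on the $\lambda$-orthogonal complement of $T_{z_\alpha}\mathcal{Z}$, obtained from non-degeneracy plus Fredholm/compactness (the compact embedding $H^1(\bn)\hookrightarrow L^2_{\calu^{p-1}}(\bn)$ of Lemma~\ref{compact embedding} is what makes the contradiction argument close, since $L$ is indefinite rather than coercive there). This buys you freedom from tracking the extra scalar $c_0$ and from the explicit eigenvalue values $\mu_1=1$, $\mu_2=p$; what it costs is that you must justify the uniform inverse bound (your Fredholm argument does this, with $\xi$-independence by isometry invariance), whereas the paper's weighted spectral-gap route yields an explicit constant $1-p/\mu_3$ and reuses the same machinery as the Bianchi--Egnell part. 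Your treatment of the attainment of the infimum, the first-order orthogonality relations, and the estimate $\|\mathcal{N}_\alpha(r_\alpha)\|_{H^{-1}(\bn)}=o(\|r_\alpha\|_\lambda)$ (via $\|r\|_{L^{p+1}}^p$ and, for $p>2$, $\|z\|_{L^{p+1}}^{p-2}\|r\|_{L^{p+1}}^2$) are all sound and parallel in spirit to steps the paper also performs.

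One caveat: the "core analytic input" you flag, namely $\ker L_{\calu}=T_{\calu}\mathcal{Z}$, is exactly the non-degeneracy theorem of Ganguly--Sandeep quoted in Section~\ref{key lem} (the result from \cite{DG2}), and you should simply cite it, as the paper does. Your sketched derivation of it in the critical case is not right as stated: Proposition~\ref{PEUCS} compares the best constants $S_{n,\tstar-1,\lambda}(\bn)$ and $S(\rn)$ and is used (together with the Palais--Smale decomposition of \cite{BS}) only to exclude Aubin--Talenti bubbles from almost-optimizing or Palais--Smale sequences; it says nothing about kernel elements of the linearized operator, so it cannot be the mechanism ruling out a spurious "dilation" mode. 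Replacing that sketch by the citation to \cite{DG2} closes the argument.
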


A direct application of Theorem \ref{maintheorem-2} is the following 

%Let $\epsilon_0>0$ be a small number such that 
%the interval $((1-\epsolin_0)S_{n,p,\lambda}(\B^n)^{\frac{p+1}{p-1}}, ()S_{n,p,\lambda}(\B^n)^{\frac{p+1}{p-1}})$ does not contain the energy of any other bubbles:
%\begin{align}\label{onebubble}
%\begin{cases}
%\epsilon_0 < <S_{n,p,\lambda}(\B^n)^{\frac{p+1}{p-1}}, \ \ \mbox{if} \ 1 < p < \tstar-1, n \geq 3 \\
%\epsilon_0 < <\min\left\{S_{n,p,\lambda}(\B^n)^{\frac{n}{2}}, S(\rn)^{\frac{n}{2}}-S_{n,p,\lambda}(\B^n)^{\frac{n}{2}}\right\}\ \ \mbox{if} \ p = \tstar-1, n \geq 4, \\
%\end{cases}
%\end{align}
%then we have
\begin{corollary}\label{corthm2}
Let $n\geq 3, \lambda$ and $p$ satisfies the hypothesis {\bf (H1)}. Then there exists $\epsilon_0 >0$  and a constant $C(n,p,\lambda)>0$ such that the following statement holds: for any non-negative function $u \in H^1(\bn)$ satisfying
\begin{align*}
(1-\epsilon_0)S_{n,p,\lambda}(\B^n)^{\frac{p+1}{p-1}} \leq \|u\|_{\lambda}^2 \leq (1+\epsilon_0)S_{n,p,\lambda}(\B^n)^{\frac{p+1}{p-1}}
\end{align*}
there holds
\begin{align*}
\hbox{dist}\left(u, \mathcal{Z}\right)\leq C(n, p, \lambda) \|I'_{\lambda}(u)\|_{H^{-1}(\B^{n})}.
\end{align*}
\end{corollary}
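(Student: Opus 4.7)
My plan is to derive Corollary \ref{corthm2} from Theorem \ref{maintheorem-2} by a standard compactness and contradiction argument. Suppose, negating both quantifiers, that no positive $\epsilon_0$ and no finite constant make the conclusion true. Then for each $k \in \N$ there exists a non-negative $u_k \in H^1(\bn)$ with
\begin{align*}
(1-1/k)\, S_{n,p,\lambda}(\bn)^{\frac{p+1}{p-1}} \le \|u_k\|_\lambda^2 \le (1+1/k)\, S_{n,p,\lambda}(\bn)^{\frac{p+1}{p-1}}
\end{align*}
yet $\hbox{dist}(u_k,\mathcal{Z}) > k\,\|I'_\lambda(u_k)\|_{H^{-1}(\bn)}$.

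First I would note that $\{\|u_k\|_\lambda\}$ is bounded, so by the triangle inequality $\hbox{dist}(u_k,\mathcal{Z}) \le \|u_k\|_\lambda + \|\calu\|_\lambda$ is uniformly bounded. Consequently, if $\|I'_\lambda(u_k)\|_{H^{-1}(\bn)}$ stays bounded away from zero along a subsequence, then the ratio $\hbox{dist}(u_k,\mathcal{Z})/\|I'_\lambda(u_k)\|_{H^{-1}(\bn)}$ is bounded, contradicting its required growth beyond $k$. Hence $\|I'_\lambda(u_k)\|_{H^{-1}(\bn)} \to 0$.

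Next I would verify that $(u_k)$ is a Palais–Smale sequence at the bubble energy level $\frac{p-1}{2(p+1)}S_{n,p,\lambda}(\bn)^{\frac{p+1}{p-1}}$. Testing the differential against $u_k$ gives
\begin{align*}
\|u_k\|_\lambda^2 - \|u_k\|_{L^{p+1}(\bn)}^{p+1} = \langle I'_\lambda(u_k), u_k\rangle,
\end{align*}
whose right-hand side tends to zero since $\|u_k\|_\lambda$ is bounded and $\|I'_\lambda(u_k)\|_{H^{-1}(\bn)} \to 0$. Combined with the sandwich $\|u_k\|_\lambda^2 \to S_{n,p,\lambda}(\bn)^{\frac{p+1}{p-1}}$ this forces $\|u_k\|_{L^{p+1}(\bn)}^{p+1} \to S_{n,p,\lambda}(\bn)^{\frac{p+1}{p-1}}$, and therefore
\begin{align*}
I_\lambda(u_k) = \tfrac{1}{2}\|u_k\|_\lambda^2 - \tfrac{1}{p+1}\|u_k\|_{L^{p+1}(\bn)}^{p+1} \longrightarrow \tfrac{p-1}{2(p+1)}\, S_{n,p,\lambda}(\bn)^{\frac{p+1}{p-1}}.
\end{align*}

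Theorem \ref{maintheorem-2}, applied to the non-negative Palais–Smale sequence $(u_k)$, now supplies a uniform constant $C(n,p,\lambda)$ with $\hbox{dist}(u_k,\mathcal{Z}) \le C(n,p,\lambda)\,\|I'_\lambda(u_k)\|_{H^{-1}(\bn)}$ for all large $k$, contradicting the standing assumption $\hbox{dist}(u_k,\mathcal{Z}) > k\,\|I'_\lambda(u_k)\|_{H^{-1}(\bn)}$ once $k$ exceeds $C(n,p,\lambda)$. The argument is entirely soft; the only conceptual point to check is that the constant produced by Theorem \ref{maintheorem-2} is independent of the particular Palais–Smale sequence, which is the natural reading of its statement, so no substantive obstacle arises.
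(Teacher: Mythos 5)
Your proposal is correct, and its core is the same as the paper's: everything is reduced to Theorem \ref{maintheorem-2} after disposing of the regime where $\|I'_\lambda(u)\|_{H^{-1}(\bn)}$ is not small via the trivial bound $\hbox{dist}(u,\mathcal{Z})\le \|u\|_\lambda+\|\calu\|_\lambda$. The organization differs, though: the paper argues directly by a dichotomy (if $\|I'_\lambda(u)\|_{H^{-1}(\bn)}\ge\delta_0$ it exhibits the explicit constant $\tfrac{2}{\delta_0}S_{n,p,\lambda}(\bn)^{\frac{p+1}{2(p-1)}}$; otherwise it uses the energy window \eqref{some inequality} together with Theorem \ref{t:BS1} to see that the hypotheses of Theorem \ref{maintheorem-2} are met), whereas you argue by contradiction along a diagonal sequence with $\epsilon_0=1/k$ and $C=k$. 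Your shrinking window buys a genuinely cleaner verification of the Palais--Smale level: pairing $I'_\lambda(u_k)$ with $u_k$ gives $\|u_k\|_{L^{p+1}}^{p+1}=\|u_k\|_\lambda^2+o(1)\to S_{n,p,\lambda}(\bn)^{\frac{p+1}{p-1}}$, hence $I_\lambda(u_k)\to\frac{p-1}{2(p+1)}S_{n,p,\lambda}(\bn)^{\frac{p+1}{p-1}}$ exactly, with no appeal to the Palais--Smale decomposition at that step; moreover your contradiction scheme is robust even if the constant in Theorem \ref{maintheorem-2} were sequence-dependent, since any finite constant for the diagonal sequence already contradicts $\hbox{dist}(u_k,\mathcal{Z})>k\|I'_\lambda(u_k)\|_{H^{-1}(\bn)}$. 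What you give up, relative to the paper, is only the explicit form of the constant in the large-deficit case; both arguments are sound, and yours is if anything tidier on the energy-level point.
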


One can study the problem of Theorem \ref{maintheorem-2} relaxing non-negativity assumption but locally around one bubble as in \cite{CFM}. The proof is almost the same with necessary modifications.
In a forthcoming article \cite{BGKM}, we plan to address the local quantitative stability of \eqref{eq1} for more than one bubble on the hyperbolic space.

\medskip

We have utilized the above stability results in two applications: 

\begin{enumerate}

\item  Rate of stabilization and finite time extinction of the fast diffusion equations in the sub-critical regime, see Section~\ref{fd sec} for complete details. We want to stress that this is not a straightforward application of our stability results, as in \cite[Theorem~1.3]{CFM} and \cite[Theorem~5.1]{FG}. We postpone this discussion to subsection~1.1.

\medskip

\item  We obtain stability of the Hardy-Sobolev-Maz'ya inequality and the corresponding Euler-Lagrange equation on the regime of non-negative, partially symmetric functions.  We will provide the exact statement and the necessary background on this topic in Section~\ref{hsm sec} of this article. 
\end{enumerate}

\subsection{Fast Diffusion Equations: Main Hurdles and Novelty of our proof} In this subsection we briefly explain the novelty of our results and techniques to obtain a 
quantitative convergence rate for relative error for the sharp asymptotics of the solutions of the fast diffusion flows on the hyperbolic space.  
This problem was first considered and addressed in the hyperbolic space by Grillo-Muratori in \cite{GM}. The authors established $L^{\infty}$-convergence of the relative error for solutions 
of the sub-critical fast diffusion equations corresponding to radial initial data. Among other results, the authors  proved that near the extinction time the behaviour of the solutions is given by an appropriately defined profile 
$\calu_{m}$ (see \eqref{calum}). In addition, the relative error $\frac{u(., t)}{\calu_m(\cdot,t)}-1$ converges uniformly to 0: 

\begin{align*}
\lim_{t \rightarrow T-} \left \| \frac{u(\cdot, t)}{\calu_m(\cdot,t)} - 1\right\|_{L^{\infty}(\bn)} = 0,
\end{align*}
where  $T$ is the extinction time (see Section~\ref{fd sec} for the definition).  To date, the rate of convergence was not known and the question is quite elusive in the hyperbolic space setting. On the other hand,
Ciraolo-Figalli-Maggi \cite{CFM} and  Figalli-Glaudo \cite{FG}  exhibit a methodology to derive the rate of convergence of the relative error for the fast diffusion equation corresponding to the Yamabe-flow using their quantitative stability results of Sobolev inequality in the Euclidean space. The results of \cite{CFM, FG} preceded by Del-Pino and S\'{a}ze back in 2001 \cite{FD8} where the authors obtained stabilization of the solutions without a rate of convergence. The formal argument of \cite{CFM, FG} is as follows: in the logarithmic time scale, the rescaled fast diffusion flow decays along certain energy functional which corresponds to the Euler-Lagrange functional of the Sobolev embedding. It turns out that the dissipation of the energy along the flow can be estimated in terms of the $H^{-1}$-norm of the deficit functional; hence, their stability results are in force.
As a consequence, the energy of the flow stabilizes to the energy of the Aubin-Talenti bubble exponentially fast,
and one can estimate the $L^{\tstar}$-distance of the flow from the manifold of Aubin-Talenti bubbles in terms of the energy deficit. In particular,  the corresponding relative linear entropy of the error decays exponentially fast along the flow (see Section \ref{fd sec}).
The next step is to bridge between the relative linear entropy of the error (or the $L^{\tstar}$-norm of the error) to the $L^{\infty}$-norm of the relative error. This bridge has already been constructed by Del-Pino and S\'{a}ze \cite{FD8} by lifting the equation on the sphere via stereographic projection. One important fact that was crucially used is the fact that the Jacobian of the stereographic projection is a constant multiple of $U[0,1]^{\tstar}$ where $U[0,1]$ is an Aubin-Talenti bubble.
Subsequently, the uniform decay of the relative error can be proved by interpolation inequalities and the gradient estimates obtained 
in \cite[Proposition~5.1]{FD8}. However, there are several hidden and challenging difficulties if one wants to implement this idea of Ciraolo-Figalli-Maggi and 
 Figalli-Glaudo in finding the convergence rate for the sub-critical fast diffusion flows on the hyperbolic space. When posed on the hyperbolic space, the bridge from the relative linear entropy of the error (or the $L^{p+1}$-norm of the error) to the uniform bound of the relative error is quite involved and that is where the main difficulty appears. On the other hand, this bridge from linear entropy of the error to uniform relative error estimate has appeared earlier in the literature in other contexts of the fast diffusion flow in the Euclidean space. See for example, \cite{FD1,FD3} and the very recent treatise by Bonforte and Figalli \cite{FD7}.
 %we do not have such stereographic projections to lift the problem on the sphere or by defining an appropriate weighted measure in terms of $\calu$ and then estimate the $L^{p+1}$-th norm of relative error we fall into a blow-up issue near the pole. Therefore this is not a straightforward application of our stability 
 %results to obtain the rate of convergence. 
 \medskip
 
 We present below a rough sketch of ideas involved in the proof. Our strategy is two-pronged: 
 
 \begin{itemize}
 
 \item In the first step, following the idea of Ciraolo-Figalli-Maggi and Figalli-Glaudo we establish an exponential decay of $\| u \, - \calu_m  \|_{L^{p+1}}$ using our quantitative stability result of the Poincar\'e-Sobolev inequality and subsequently, we could derive exponential decay estimates on the relative linear entropy, see Section~\ref{fd sec} for more details. 
 
 \medskip
 
 \item Secondly, to get the uniform bound on the relative error in terms of the linear entropy, we partly follow and adapt the ideas of Bonforte-Figalli \cite{FD4}. In particular, we derive smoothing estimates for the relative error in terms of linear entropy. We want to point out that because of the non-compactness of the hyperbolic space and the specific nature of the volume element, deriving such smoothing estimates requires a novel way of analyzing the integrals involving the error, which makes the proofs of smoothing estimates very delicate and involved. We perform boundary and interior estimates separately to achieve our desired goal. Unfortunately at this step, we arrive at a dimensional restriction (partly because of the integrability of $\calu^{p-1}$). We prove the smoothing estimates for the relative error holds true when $2< p < {\tstar} -1,$ i.e., $n \in [3, 5]$ (see Theorem~\ref{linftyentropy}). Combining the
  decay estimates on linear entropy Theorem~\ref{entropy decay} and Theorem~\ref{linftyentropy}, we provide the desired rate of convergence of the relative error in uniform topology at the extinction time for $2< p < {\tstar} -1$ (see Theorem~\ref{asymptotic}). As of now, we don't know whether the assumption on $p$ is purely technical or a generic one. Because of the sharp decay of $\calu,$ we infer that $\calu^{p-1} \in L^1(\bn)$ if and only if $p>2,$ which is strongly reflected in our argument. We refer Section~\ref{relative-error-theorem} for more details.

 \end{itemize}

\medskip

The organization of the rest of the paper is as follows: In Section \ref{pre} we recall some basics of the hyperbolic space. In Section \ref{key lem} we prove all the intermediate lemmas that will be used to prove our main theorems. We present the proof of Theorem \ref{maintheorem-1} and Theorem \ref{maintheorem-2} in Section \ref{proof1} and \ref{proof2} respectively. In Section~\ref{fd sec}, we formulate the necessary background and notations related to the subcritical fast diffusion flows in the hyperbolic space and derive the decay of linear entropy with rate.
% In the subsequent Section~\ref{relative-error-theorem} 
%Also, we derive the upper bound in $L^{p+1}$ norm on the error with decay estimate on the linear entropy and state the main theorem concerning rates of convergence for the relative error (see Theorem~\ref{asymptotic}). 
In Section~\ref{relative-error-theorem}, we establish the smoothing estimates for the relative error in terms of linear entropy and prove our main result Theorem~\ref{asymptotic} concerning the fast diffusion flow. Finally, in Section~\ref{hsm sec}, as another application of our stability results, we prove the stability of the Hardy-Sobolev-Maz'ya inequality. 

%and its ramifications. 

%%%%%%%

\section{Geometric and functional analytic Preliminaries}\label{pre}

The Euclidean unit ball $\Be^n:= \{x \in \mathbb{R}^n: |x|^2<1\}$ equipped with the Riemannian metric
\begin{align*}
{\rm d}s^2 = \left(\frac{2}{1-|x|^2}\right)^2 \, {\rm d}x^2
\end{align*}
constitute the ball model for the hyperbolic $n$-space, where ${\rm d}x$ is the standard Euclidean metric and $|x|^2 = \sum_{i=1}^nx_i^2$ is the standard Euclidean length. By definition, the hyperbolic $n$-space is a $n$-dimensional complete, non-compact Riemannian manifold having constant sectional curvature equal to $-1$ and any two manifolds sharing the above properties are isometric \cite{RAT}. In this article, all our computations, with an exception of Section \ref{hsm sec}, will involve only the ball model and will be denoted by $\bn$. We denote the inner product on the tangent space of $\mathbb{B}^n$ by $\langle \cdot, \cdot \rangle_{\bn}$ and
the volume element is given by $\dvg = \left(\frac{2}{1 - |x|^2}\right)^n {\rm d}x,$ where ${\rm d}x$ denotes the 
Lebesgue measure on $\mathbb{R}^n.$\\
Let $\nabla_{\bn}$ and $\Delta_{\bn}$ denote gradient vector field
and Laplace-Beltrami operator respectively.  Therefore, in terms of local (global) coordinates $\nabla_{\bn}$ and $\Delta_{\bn}$ takes the form:
\begin{align*} 
 \nabla_{\bn} = \left(\frac{1 - |x|^2}{2}\right)^2\nabla,  \quad 
 \Delta_{\bn} = \left(\frac{1 - |x|^2}{2}\right)^2 \Delta + (n - 2)\left(\frac{1 - |x|^2}{2}\right)  x \cdot \nabla,
\end{align*}
where $\nabla, \Delta$ are the standard Euclidean gradient vector filed and Laplace operator respectively, and `$\cdot$' denotes the 
standard inner product in $\mathbb{R}^n.$
%%%

\medskip 

\noindent
 {\bf Hyperbolic distance on $\bn.$} The hyperbolic distance between two points $x$ and $y$ in $\bn$ will be denoted by $d(x, y).$ The hyperbolic distance between
$x$ and the origin can be computed explicitly  
\begin{align*}
\rho := \, d(x, 0) = \int_{0}^{|x|} \frac{2}{1 - s^2} \, {\rm d}s \, = \, \log \frac{1 + |x|}{1 - |x|},
\end{align*}
and therefore  $|x| = \tanh \frac{\rho}{2}.$ Moreover, the hyperbolic distance between $x, y \in \bn$ is given by 
\begin{align*}
d(x, y) = \cosh^{-1} \left( 1 + \dfrac{2|x - y|^2}{(1 - |x|^2)(1 - |y|^2)} \right).
\end{align*}
As a result, a subset of $\bn$ is a hyperbolic sphere in $\bn$ if and only if it is a Euclidean sphere in $\mathbb{R}^n$ and contained in $\bn$ possibly 
with a different centre and different radius which can be explicitly computed from the formula of $d(x,y)$ \cite{RAT}.  Geodesic balls in $\bn$ of radius $r$ centred at $x \in \bn$ will be denoted by 
$$
B_r(x) : = \{ y \in \bn : d(x, y) < r \}.
$$

%%%
Next, we introduce the concept of hyperbolic translation.
\subsection{Hyperbolic translation}
\begin{definition}[Hyperbolic translation]\label{hypt}
 For $b \in \mathbb{B}^n,$ we define the hyperbolic translation $\tau_{b}: \mathbb{B}^n \rightarrow \mathbb{B}^n$ by 
  \begin{align} \label{hyperbolictranslation}
  \tau_{b}(x) := \frac{(1 - |b|^2)x + (|x|^2 + 2 x \cdot b + 1)b}{|b|^2|x|^2 + 2 x\cdot b  + 1}.
 \end{align}
\end{definition}
 Then $\tau_{b}: \mathbb{B}^n\rightarrow \mathbb{B}^n$ is an isometry, (see \cite{RAT}, theorem 4.4.6) for details and
 further discussions on isometries.
  
For the convenience of the reader, we list several well-known properties of the hyperbolic translation in the next lemma. The proof follows by now standard properties of the M\"{o}bius transformations on the ball model which we skip for brevity and refer to the book \cite{Stoll}. 

\begin{lemma}\label{lemma1}
  For $b \in \B^n,$ let $\tau_b$ be the hyperbolic translation of $\mathbb{B}^n$ by $b.$ Then for every $u \in C_c^{\infty}(\mathbb{B}^n),$ there holds,

\begin{itemize}
  
\item[(i)] $\Delta_{\bn} (u \circ \tau_b) = (\Delta_{\bn} u) \circ \tau_b, \ \ \langle \nabla_{\bn} (u \circ \tau_b),
   \nabla_{\bn} (u \circ \tau_b)\rangle_{\bn} = \langle(\nabla_{\bn} u) \circ \tau_b, (\nabla_{\bn} u) \circ \tau_b\rangle_{\bn}.$
  
  \item[(ii)] For every open subset $U$ of $\mathbb{B}^n$
  \begin{align*}
  \int_{U} |u \circ \tau_b|^p \, \dvg = \int_{\tau_b(U)} |u|^p \, \dvg , \ \mbox{for all} \ 1 \leq p < \infty.
 \end{align*}
 
 \item[(iii)] For every $\phi, \psi \in C_c^{\infty}(\B^n)$,
 \begin{align*}
 \int_{\B^n} \phi(x) (\psi \circ \tau_b)(x) \, \dvg = \int_{\B^n} (\phi \circ \tau_{-b})(x) \psi(x) \, \dvg. 
 \end{align*}
\end{itemize}
\end{lemma}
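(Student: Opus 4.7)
The proof plan rests on a single unifying principle: every assertion in the lemma is a manifestation of the fact that $\tau_b$ is a Riemannian isometry of $(\bn, g_{\bn})$, a fact already invoked in the paper via Ratcliffe's Theorem 4.4.6. Once this is accepted, parts (i)--(iii) become instances of standard differential-geometric identities for isometries. The plan is to verify each property by reducing to this invariance, without any heavy computation.

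For part (i), my approach is to use the intrinsic (coordinate-free) definitions of the Laplace--Beltrami operator and of the gradient norm, both of which depend only on the metric tensor. If $\tau_b^{\ast} g_{\bn} = g_{\bn}$, then pulling back the identity $\Delta_{g} u = \operatorname{div}_{g}(\nabla_{g} u)$ immediately yields $\Delta_{\bn}(u \circ \tau_b) = (\Delta_{\bn} u) \circ \tau_b$, and likewise the pointwise identity $\langle \nabla_{\bn}(u \circ \tau_b), \nabla_{\bn}(u \circ \tau_b)\rangle_{\bn}(x) = \langle (\nabla_{\bn} u)\circ \tau_b, (\nabla_{\bn} u)\circ \tau_b\rangle_{\bn}(x)$. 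An equivalent route, should one prefer a concrete verification, would be to differentiate the explicit formula \eqref{hyperbolictranslation} to compute $D\tau_b$ and substitute into the coordinate expressions for $\nabla_{\bn}$ and $\Delta_{\bn}$ recorded in Section~\ref{pre}; this is tedious but purely algebraic.

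For part (ii), I would invoke that an isometry preserves the Riemannian volume form, i.e.\ $\tau_b^{\ast}\, \dvg = \dvg$. The change-of-variables formula $\int_{U} F(\tau_b(x)) \tau_b^{\ast}\dvg = \int_{\tau_b(U)} F(y)\, \dvg$ applied to $F = |u|^p$ then yields the claim for all $1 \leq p < \infty$ by density. Part (iii) follows immediately from (ii) together with the group property $\tau_b^{-1} = \tau_{-b}$ of hyperbolic translations in the ball model: choosing $U = \bn$ (so $\tau_b(U) = \bn$), substituting $y = \tau_b(x)$ in $\int_{\bn} \phi(x)(\psi \circ \tau_b)(x)\, \dvg$ produces $\int_{\bn} \phi(\tau_{-b}(y)) \psi(y)\, \dvg$, which is exactly the right-hand side.

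There is no genuine obstacle here; the only step that merits care is confirming $\tau_b^{-1} = \tau_{-b}$, which is a direct calculation from \eqref{hyperbolictranslation} and is standard for M\"obius transformations of the ball. Since the authors themselves state that the proof follows from standard M\"obius-transformation properties and defer to Stoll's book, my plan is to present the three items as short consequences of isometry invariance and record the inversion identity $\tau_b^{-1} = \tau_{-b}$ explicitly as the sole algebraic ingredient.
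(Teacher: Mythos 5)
Your proposal is correct and follows essentially the same route the paper intends: the authors skip the proof, citing the isometry property of $\tau_b$ (Ratcliffe, Thm.\ 4.4.6) and standard M\"obius-transformation facts from Stoll, and your argument simply makes that standard reduction explicit (isometry invariance of $\Delta_{\bn}$, $|\nabla_{\bn}\cdot|$, and $\dvg$, plus the inversion identity $\tau_b^{-1}=\tau_{-b}$). No gaps.
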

By density, the above formulas hold as long as the integrals involved are finite.

\subsection{The Sobolev space $H^1(\mathbb{B}^n)$:} Thanks to the Poincar\'{e} inequality, we define the Sobolev space on $\bn$, denoted by $H^1(\mathbb{B}^n)$, as the completion of $C_c^\infty(\mathbb{B}^n)$ with respect to the norm
\begin{align*}
 \|u\|_{H^1(\mathbb{B}^n)} :=  \left(\int_{\mathbb{B}^n} 
 |\nabla_{\bn} u|^2  \, \dvg \right)^{\frac{1}{2}},
\end{align*}
where  $|\nabla_{\bn} u| $ is given by
 \begin{align*}
  |\nabla_{\bn} u| := \langle \nabla_{\bn} u, \nabla_{\bn} u \rangle^{\frac{1}{2}}_{\bn}.
\end{align*}
Using Poincar\'e inequality once again, $\|u\|_{\lambda}$ defined by 
\begin{align*}
\|u\|_{\lambda}^2 : = \int \limits_{\B^{n}}\left(|\nabla_{\bn} u|^{2}-\lambda u^{2}\right) \dvg~\hbox{ for all } u\in H(\B^{n}),
\end{align*}
is an equivalent $H^1(\mathbb{B}^n)$ norm for $\lambda < \frac{(n-1)^2}{4}.$ However, for $\lambda = \frac{(n-1)^2}{4},$ $\|u\|_{\lambda}$ is not an equivalent norm because, in general, $u$ may fail to be square integrable. Moreover, thanks to Lemma \ref{lemma1}, the norms $\|u\|_{H^1(\bn)}, \|u\|_{\lambda}$ are invariant under the action of isometries of the ball model. 

\medskip 

We also define the subset of $H^1(\bn)$ consisting of radial functions
\begin{align*}
H^1_{\mbox{\tiny{rad}}}(\bn) := \{u \ \mbox{radial} \ | \ \int_{0}^{\infty} \left[u^2(\rho) + (u^{\prime}(\rho))^2\right] (\sinh \rho)^{n-1} \ d\rho < \infty \},
\end{align*}
where $\prime$ denotes the differentiation with respect $\rho.$

%\noindent
%We can refine the Struwe type profile decomposition to obtain
%\begin{theorem}[conjecture, possibly true]
%Let $(u_{\alpha})$ be a Palais-Smale sequence of non-negative functions for $I_{\lambda}$. Then there exist $\ell\in\N$ sequence of points $(\xi^{i}_{\alpha})$, $1\leq i\leq \ell$, in $H^{1}(\B^{n})$ and a nonnegative $u_{0}$ satisfying \eqref{eq1}  such that as $\alpha \to \infty$
%\begin{align}
%&u_{\alpha}=u_{0}+\sum \limits_{i=1}^{\ell}u\circ \tau_{\xi^{i}_{\alpha}}+o(1)~\hbox{ in } H^{1}(\B^{n}),\notag\\
%&\lim \limits_{\alpha\to\infty}I_{\lambda}(u_{\alpha})=I_{\lambda}(u)+\ell S_{n, \lambda}(\B^{n})+o(1). 
%\end{align}
%\end{theorem}
%\noindent
%Then we obtain a quantitative Struwe type profile decomposition for non-negative functions in the spirit of Figalli-Glaudo $XX$ and  Deng-Sun-Wei $XX$
%\begin{theorem}[conjecture, possibly not exactly true as stated below]
%Let $(u_{\alpha})$ be a Palais-Smale sequence of non-negative functions for $I_{\lambda}$ such that $\lim \limits_{\alpha\to\infty}I_{\lambda}(u_{\alpha})=\ell S_{n, \lambda}(\B^{n})$. If $u_{0}=0$ in the above Struwe decomposition then
%\begin{align}
%\hbox{dist}\left(u_{\alpha}, \mathcal{Z}_{\ell}\right)\leq C(n, \lambda) \|I'_{\lambda}(u_{\alpha)}\|_{H^{-1}(\B^{n})}
%\end{align}
%where $\mathcal{Z}_{\ell}$ denotes 
%\begin{align}
%\mathcal{Z}_{\ell}:=\left\{\sum \limits_{i=1}^{\ell}\mathcal{U}\circ\tau_{\xi^{i}}: \xi^{i} \in \B^{n}, 1\leq i\leq d\right\}.
%\end{align}
%\end{theorem}

\section{Non-degeneracy, spectral gap and Eigen-space of the Linearised operator}\label{key lem}

We recall some of the well-known results concerning the  Poincar\'e-Sobolev equations on the hyperbolic space. Mancini and Sandeep \cite{MS} studied the existence and uniqueness of finite energy positive solutions of 
\begin{equation}\label{hsm}
-\Delta_{\mathbb{B}^{n}} u \, - \, \lambda \, u \, = \, u^{p}, \quad u \in H^{1}\left(\mathbb{B}^{n}\right),
\end{equation}
under the assumptions $\lambda \leq \frac{(n-1)^2}{4}$ and $1 < p \leq \frac{n+2}{n-2}$ if $n \geq 3,$  $1 < p < \infty$ if $n=2.$ According to their results, in the subcritical 
case, i.e., $1 < p < 2^{\star} -1$ if $n \geq 3,$ and any $p > 1$ is allowed if $n =2,$ the problem \eqref{hsm} has a positive solution if and only if $\lambda < \frac{(n-1)^2}{4}.$ On the other hand in the critical case i.e., $p = \tstar - 1$ a positive finite energy solution exists if and only if $n \geq 4$ and $\frac{n(n-2)}{4} < \lambda < \frac{(n-1)^2}{4}.$
These 
positive solutions are also shown to be unique up to hyperbolic isometries except possibly when $n =2$ and $\lambda > \frac{2(p-1)}{(p+3)^2}.$  Subsequently, the existence of sign-changing solutions, compactness and non-degeneracy were studied in \cite{BS, DG1, DG2}.
We remark that the existence and multiplicity of infinite energy solutions of \eqref{hsm} for critical and as well as for super-critical exponents also have been studied in the literature \cite{BFG, BGGV}.

\medskip

For our convenience, we list some identities and assumptions used in this article.

\begin{itemize}
\item[$\bullet$] We restrict ourselves to $n \geq 3$ in this article and we assume $1 < p \leq \tstar -1,$ where $\tstar = \frac{2n}{n-2}$. When $p = \tstar -1$ the following identities hold
\begin{align*}
\frac{p+1}{p-1} = \frac{n}{2}, \ \ \frac{1}{2} - \frac{1}{p+1} = \frac{2}{n}.
\end{align*}

\item[$\bullet$]  For $n  = 3,$ we exclude the critical case, namely, 
we assume $1 < p < \tstar - 1$ and $\lambda < \frac{(n-1)^2}{4}.$

\item[$\bullet$] For $n \geq 4,$ the critical case is also allowed $1 < p \leq \tstar -1$ but the restriction on $\lambda \in (\frac{n(n-2)}{4}, \frac{(n-1)^2}{4})$ is assumed.

\item[$\bullet$] For a positive function $w$ defined on $\bn,$ we define the weighted $L^2$-space
\begin{align*}
L_w^2(\bn) : = \left\{u: \bn \to \mathbb{R} \ measurable \ | \ \int_{\bn}|u|^2w \ \dvg < \infty \right\},
\end{align*}
and the associated inner product $\langle u, v \rangle_{L^2_w(\bn)} = \int_{\bn} uvw \ \dvg.$
\end{itemize}
 Under the above assumption on $n,p,\lambda$ we know that there exists a positive, unique (up to hyperbolic isometries and multiplication by constant) optimizer of \eqref{S-inq} denoted by $\calu$ and satisfies the normalization 
\begin{align*}
\|\calu\|_{\lambda}^2 = \int_{\bn} \calu^{p+1} \ \dvg = S_{n,p,\lambda}(\bn)^{\frac{p+1}{p-1}}.
\end{align*}
 and solves 
\begin{align}\label{extremal-critical}
-\Delta_{\bn} \mathcal{U} - \lambda \,  \mathcal{U} =  \mathcal{U}^{p}.
\end{align}

\medskip 

\subsection{Nondegeneracy.}
In this subsection, we collect a few key lemmas needed for the proof related to  the eigenvalues and eigenfunctions of the linearized operator 
$(-\Delta_{\bn} - \lambda)/\mathcal{U}^{p-1}$.
We know that if $\tau_b$ is a hyperbolic translation then $\mathcal{U} \circ \tau_b,$ also solves \eqref{extremal-critical}. This fact indicates that the solution $\mathcal{U}$ must degenerate or, in other words, the kernel of the linearized operator contains non-trivial elements. It was shown in \cite{DG2} that the degeneracy happens only along an $n$-dimensional subspace characterized by the vector fields 
\begin{align*}
V_i(x) := (1 + |x|^2) \frac{\partial}{\partial x_i} \, - \, 2x_i \sum_{j =1}^{n} x_j \, \frac{\partial}{\partial x_j}.
\end{align*}
 for $i = 1, \ldots, n.$ More precisely, we define 
\begin{align*}
 \Phi_i(x) := \frac{d}{dt} \Big|_{t=0} \mathcal{U} \circ \tau_{t e_i} , \ \ 1 \leq i \leq n,
 \end{align*}
then $\Phi_{i}(x) = V_i(\calu),$ $\Phi_i$ solves the eigenvalue problem 
\begin{align} \label{eigenvalue}
-\Delta_{\bn} \Phi_i - \lambda \Phi_i = p \,  \mathcal{U}^{p-1} \, \Phi_i,
\end{align}
and the degeneracy in the solution space to \eqref{eigenvalue} can occur only along the directions $\Phi_i, 1 \leq i \leq n.$ 
 \begin{theorema}[\cite{DG2}]
 Let $V_i$ be the vector fields in $\bn$ defined above and $\Phi_i = V_i(\calu).$ Then $\{\Phi_i\}_{i=1}^n$ forms a basis for the kernel of $(-\Delta_{\bn} - \lambda - p\calu^{p-1}).$
 \end{theorema}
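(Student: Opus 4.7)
The plan is to establish both inclusions of $\mathrm{span}\{\Phi_i\}_{i=1}^n$ with $\ker L$, where $L := -\Delta_{\bn} - \lambda - p\calu^{p-1}$. First I would verify $\Phi_i \in \ker L$ directly: since $\tau_{te_i}$ is a one-parameter family of hyperbolic isometries, $u_t(x) := \calu(\tau_{te_i}(x))$ again solves \eqref{extremal-critical} for every $t$, and differentiating this identity at $t=0$ yields $L\Phi_i = 0$. A short calculation in hyperbolic polar coordinates $(\rho,\theta)$ (using $\rho = \log\frac{1+|x|}{1-|x|}$) shows $\Phi_i(x) = 2\calu'(\rho)\,\theta_i$; since $\theta_i = x_i/|x|$ are linearly independent degree-one spherical harmonics and $\calu' \not\equiv 0$, linear independence of the $\Phi_i$ is immediate.

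The substantive work is showing $\ker L \subseteq \mathrm{span}\{\Phi_i\}$. Because $\calu$ is radial, $L$ commutes with rotations, so its kernel splits along spherical harmonic sectors. I would expand any $v \in \ker L$ as $v(\rho,\theta) = \sum_{k\geq 0} v_k(\rho) Y_k(\theta)$ with $Y_k$ an orthonormal basis of degree-$k$ spherical harmonics. Using $\Delta_{\bn} = \partial_\rho^2 + (n-1)\coth\rho\,\partial_\rho + \sinh^{-2}\rho\,\Delta_{S^{n-1}}$ and $-\Delta_{S^{n-1}} Y_k = k(k+n-2)Y_k$, each radial profile satisfies
\begin{equation*}
\mathcal{L}_k v_k := -v_k'' - (n-1)\coth\rho\,v_k' + \frac{k(k+n-2)}{\sinh^2\rho}\,v_k - \lambda v_k - p\,\calu^{p-1}v_k = 0,
\end{equation*}
with the finite-energy constraint $\int_0^\infty (v_k'^{\,2} + v_k^2)\sinh^{n-1}\rho\,d\rho < \infty$. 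The task reduces to a sector-by-sector analysis of the Sturm-Liouville operators $\mathcal{L}_k$ on $(0,\infty)$ with the natural weight $\sinh^{n-1}\rho$.

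For $k=1$ I would observe that differentiating the radial equation $-\calu'' - (n-1)\coth\rho\,\calu' - \lambda \calu = \calu^p$ in $\rho$ yields $\mathcal{L}_1 \calu' = 0$. Because $\calu$ is strictly decreasing, $-\calu' > 0$ on $(0,\infty)$, so $-\calu'$ is a positive solution of $\mathcal{L}_1 w = 0$; this forces $0$ to be the bottom of the spectrum of $\mathcal{L}_1$ with one-dimensional eigenspace, giving $v_1 = c\calu'$. For $k \geq 2$ the potential $k(k+n-2)/\sinh^2\rho$ strictly dominates the $k=1$ potential, hence $\mathcal{L}_k > \mathcal{L}_1$ in the quadratic-form sense, which pushes the first eigenvalue strictly above zero and forces $v_k \equiv 0$. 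Combining the $k=1$ conclusion with the fact that $\{\theta_i\}_{i=1}^n$ is a basis of the degree-$1$ spherical harmonics gives exactly the $n$ directions $\Phi_i$.

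The main obstacle is the radial $k=0$ sector, where I must rule out any nontrivial finite-energy $v_0$ solving $\mathcal{L}_0 v_0 = 0$. Unlike the Euclidean Sobolev problem, the hyperbolic PDE has no dilation invariance to generate a candidate scaling mode, so there is no obvious element to contend with, yet the absence must still be proved. My plan is to write $\mathcal{L}_0 = \mathcal{A} - (p-1)\calu^{p-1}$ with $\mathcal{A} := -\partial_\rho^2 - (n-1)\coth\rho\,\partial_\rho - \lambda - \calu^{p-1}$, and use that $\calu > 0$ satisfies $\mathcal{A}\calu = 0$, so $\calu$ is the principal eigenfunction of $\mathcal{A}$ at eigenvalue zero. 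Combined with the uniqueness (up to hyperbolic isometry and scalar) of the positive radial solution proved by Mancini--Sandeep \cite{MS}, a Wronskian/zero-counting argument for $\mathcal{L}_0$ on the weighted half-line — together with the strict negativity of the added potential $-(p-1)\calu^{p-1}$ — shows that any finite-energy radial solution must vanish. This is the delicate step, and it is precisely where the hypothesis \textbf{(H1)} enters to ensure both the existence/uniqueness results of \cite{MS} apply and that the weight $\calu^{p-1}$ has the required integrability for the Sturmian arguments.
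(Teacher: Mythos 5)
This statement is quoted in the paper from \cite{DG2}; the paper itself offers no proof, so the relevant comparison is with the argument in that reference, which indeed follows the skeleton you describe. Your verification that $\Phi_i\in\ker(-\Delta_{\bn}-\lambda-p\,\calu^{p-1})$ by differentiating along the isometry flow, the identity $\Phi_i=2\,\calu'(\rho)\,\theta_i$, the spherical-harmonic splitting of the kernel, the $k=1$ sector being spanned by $\calu'$ (obtained by differentiating the radial ODE in $\rho$, with $-\calu'>0$ forcing it to be the ground state of $\mathcal{L}_1$ at eigenvalue $0$), and the exclusion of the sectors $k\geq 2$ by strict monotonicity of the angular potential are all correct, modulo routine checks (decay of $\calu'$ so that it is a genuine weighted eigenfunction, positivity of the bottom of the essential spectrum).

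The genuine gap is the radial sector $k=0$, which is exactly the heart of the theorem (Theorem~3.1 of \cite{DG2}), and the mechanism you propose does not close it. Writing $\mathcal{L}_0=\mathcal{A}-(p-1)\calu^{p-1}$ with $\mathcal{A}\,\calu=0$ only shows, by testing with $\calu$, that the bottom of the spectrum of $\mathcal{L}_0$ is strictly negative; it gives no information on whether $0$ occurs as a \emph{higher} radial eigenvalue, which is precisely what must be excluded. Nor does uniqueness of the positive solution from \cite{MS} by itself yield triviality of the radial kernel: in the Euclidean critical problem the positive solution is likewise unique modulo its symmetry group, and yet the radial kernel is nontrivial (the dilation mode), so the absence of a radial kernel element is a quantitative ODE fact rather than a soft consequence of uniqueness. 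Concretely, one must show that the solution of the linearized radial equation which is regular at the origin (it is $\partial_a u_a$ at the critical shooting height, where $u_a$ solves the radial IVP with $u_a(0)=a$) does not have the fast decay $e^{-\gamma_+\rho}$, $\gamma_{\pm}=\tfrac{(n-1)\pm\sqrt{(n-1)^2-4\lambda}}{2}$, needed for membership in $H^1(\bn)$; in \cite{DG2} this is done using the classification of radial solutions of the Emden--Fowler equation on $\bn$ from \cite{BGGV} together with monotone energy-type quantities. Your appeal to a ``Wronskian/zero-counting argument'' is at present an assertion of the conclusion rather than a proof, and the hypothesis {\bf (H1)} enters through this ODE analysis rather than through integrability of $\calu^{p-1}$.
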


As a result, we obtain complete  information on the first and the second eigenvalues and corresponding eigenspaces of the operator 
$$\mathcal{L} := (-\Delta_{\bn} - \lambda)/\mathcal{U}^{p-1}.$$

\subsection{Spectral gap and eigenspaces of the linearized operator} We prove the following

\begin{proposition} \label{eigen value lemma}
The first and the second eigenvalues of the operator $$\mathcal{L} := (-\Delta_{\bn} - \lambda)/\mathcal{U}^{p - 1}$$ 
are respectively $1$ and $p.$ Moreover, the first eigenspace is one dimensional and spanned by $\mathcal{U}$ and the second eigenspace is $n$-dimensional and sppaned by $\{\Phi_i\}_{1 \leq i \leq n}.$
\end{proposition}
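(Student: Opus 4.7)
The plan is to exploit the variational characterization of the eigenvalues of $\mathcal{L}$ through the weighted Rayleigh quotient
\begin{equation*}
\mathcal{Q}(v) \; := \; \frac{\|v\|_\lambda^2}{\int_{\bn} \calu^{p-1} v^2 \, \dvg}.
\end{equation*}
The exponential decay of $\calu$ makes the embedding $H^1(\bn) \hookrightarrow L^2_{\calu^{p-1}}(\bn)$ compact (split into a ball where Rellich applies and an exterior where $\calu^{p-1}$ is small), so $\mathcal{L}$ has a discrete, increasing, unbounded spectrum $\mu_1 \leq \mu_2 \leq \cdots$ for which the usual Courant--Fischer min-max characterization applies. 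Two eigenpairs are immediate: $\mathcal{L}\calu = \calu$ from \eqref{extremal-critical} and $\mathcal{L}\Phi_i = p\,\Phi_i$ from \eqref{eigenvalue}. The task is to show $\mu_1 = 1$ is simple with eigenspace $\R\calu$, and that $\mu_2 = p$ with eigenspace exactly $\mathrm{span}\{\Phi_i\}_{i=1}^n$.

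For the first eigenvalue I would combine H\"older's inequality with the Poincar\'e--Sobolev inequality \eqref{S-inq}: for every $v \in H^1(\bn)$,
\begin{equation*}
\int_{\bn} \calu^{p-1} v^2 \, \dvg \; \leq \; \Bigl(\int_{\bn} \calu^{p+1} \, \dvg\Bigr)^{\tfrac{p-1}{p+1}} \|v\|_{L^{p+1}(\bn)}^2 \; \leq \; \frac{1}{S_{n,p,\lambda}(\bn)}\Bigl(\int_{\bn} \calu^{p+1}\, \dvg\Bigr)^{\tfrac{p-1}{p+1}} \|v\|_\lambda^2.
\end{equation*}
Using the normalization $\int_{\bn} \calu^{p+1}\, \dvg = S_{n,p,\lambda}(\bn)^{(p+1)/(p-1)}$ from \eqref{calu}, the prefactor collapses to $1$; hence $\mathcal{Q}(v) \geq 1$, with equality attained by $\calu$. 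The equality cases in H\"older force $|v| = c\,\calu$, while equality in \eqref{S-inq} forces $v = c\,\calu\circ\tau_\xi$; these are simultaneously possible only when $\xi = 0$, so $\mu_1 = 1$ is simple with eigenspace $\R\calu$.

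For $\mu_2 = p$ the core is a constrained second-variation argument at the minimizer $\calu$ of the Sobolev quotient $Q(u) = \|u\|_\lambda^2/\|u\|_{L^{p+1}}^2$. By simplicity of $\mu_1$, the min-max principle gives
\begin{equation*}
\mu_2 \; = \; \inf\Bigl\{ \mathcal{Q}(v) \, : \, v \in H^1(\bn)\setminus\{0\}, \; \int_{\bn} \calu^p v \, \dvg = 0\Bigr\},
\end{equation*}
since $v \perp \calu$ in $L^2_{\calu^{p-1}}(\bn)$ reads $\int \calu^p v\,\dvg = 0$. Expanding $Q$ along $u = \calu + tv$: the Euler--Lagrange equation \eqref{extremal-critical} annihilates the first-order variation of the numerator, and the linearized constraint eliminates the first-order variation of $\|u\|_{L^{p+1}}$, leaving
\begin{equation*}
Q(\calu + tv) \; = \; S_{n,p,\lambda}(\bn) \; + \; t^2 \, S_{n,p,\lambda}(\bn)^{-2/(p-1)} \Bigl[\|v\|_\lambda^2 - p\!\int_{\bn}\calu^{p-1}v^2\,\dvg\Bigr] + o(t^2).
\end{equation*}
Minimality of $\calu$ forces the bracket to be non-negative, giving $\mu_2 \geq p$. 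Plugging in $v = \Phi_i$, which is admissible because $\Phi_i$ factors as a radial function times a first-order spherical harmonic (so $\int\calu^p\Phi_i\,\dvg = 0$ by angular integration), yields $\mathcal{Q}(\Phi_i) = p$; hence $\mu_2 \leq p$ and $\mu_2 = p$.

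Finally, the $p$-eigenspace is forced to coincide with $\mathrm{span}\{\Phi_i\}_{i=1}^n$ by the non-degeneracy result of \cite{DG2} quoted above: any $v \in H^1(\bn)$ with $\mathcal{L}v = p\,v$ lies in $\ker(-\Delta_{\bn}-\lambda-p\calu^{p-1})$, which is precisely the span of the $\Phi_i$'s. The main technical obstacle is the rigorous justification of the Taylor expansion of $Q$ with a genuine $o(t^2)$ remainder, since $p+1$ may be non-integer; this reduces to dominated-convergence estimates for $\tfrac{d^2}{dt^2}\int|\calu+tv|^{p+1}\,\dvg$, which are standard given that $p+1 \in (2, 2^*]$ and $v \in H^1(\bn)$.
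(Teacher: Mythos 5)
Your proposal is correct, but for the heart of the matter --- ruling out eigenvalues in $(1,p)$ --- it takes a genuinely different route from the paper. The setup is the same (compact embedding $H^1(\bn)\hookrightarrow L^2_{\calu^{p-1}}(\bn)$, discrete spectrum, Courant--Fischer), and your lower bound $\mathcal{Q}(v)\geq 1$ via H\"older plus Poincar\'e--Sobolev is exactly the paper's first-eigenvalue argument; for simplicity of $\mu_1$ you use the equality cases in H\"older and in the Mancini--Sandeep classification of extremizers, whereas the paper appeals to the abstract simplicity of the first eigenvalue of the positive compact operator $\mathcal{L}^{-1}$ --- both are fine, and yours is arguably more self-contained. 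For $\mu_2=p$, you argue that $\calu$ is a global minimizer of the quotient $\|u\|_\lambda^2/\|u\|_{L^{p+1}}^2$, so the second variation along directions $v$ with $\int_{\bn}\calu^p v\,\dvg=0$ is non-negative, giving $\|v\|_\lambda^2\geq p\int_{\bn}\calu^{p-1}v^2\,\dvg$, hence $\mu_2\geq p$, with equality realized by the $\Phi_i$ (whose admissibility via the factorization $\Phi_i=x_i\,g(|x|)$ is correct). The paper instead performs an odd reflection across the hyperplanes $\{x_i=0\}$: it shows $p$ is the first Dirichlet eigenvalue of $(-\Delta_{\bn}-\lambda)/\calu^{p-1}$ on the half-space $\bn_+$ (since $\Phi_1$ is a one-signed eigenfunction there), concludes that any eigenfunction with eigenvalue $\mu<p$ must be symmetric in every direction and hence radial, and then excludes radial solutions by \cite[Theorem~3.1]{DG2}. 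Both routes use the non-degeneracy theorem of \cite{DG2} to identify the second eigenspace as $\mathrm{span}\{\Phi_i\}$. What each buys: your argument avoids the half-space comparison and the radial non-existence theorem, relying only on the extremality of $\calu$ and the kernel characterization, but it does require the rigorous second-order Taylor expansion of $t\mapsto\|\calu+tv\|_{L^{p+1}}^{p+1}$ with a genuine $o(t^2)$ remainder for all $v\in H^1(\bn)$ --- a point you correctly flag; this is standard (and parallel to the expansion the paper itself performs in its Lemma on the behaviour near $\mathcal{Z}_0$), so there is no gap, only a detail to write out. The paper's reflection argument sidesteps that expansion and yields the extra structural information that any putative eigenfunction with eigenvalue in $(1,p)$ would have to be radial.
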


We divide the proof of Proposition~\ref{eigen value lemma} into several lemmas. We set $w = \calu^{p-1}$
and recall the corresponding weighted $L^2$ space by 
\begin{align*}
L^2_{\calu^{p-1}}(\bn):= \{u \ | \ \int_{\bn} |u|^2\calu^{p-1} \ \dvg < \infty\}.
\end{align*}

 We begin with a lemma concerning the compactness of an embedding. 

\begin{lemma} \label{compact embedding}
The embedding $H^1(\B^n) \hookrightarrow L^2_{\calu^{p-1}}(\bn)$ is compact.
\end{lemma}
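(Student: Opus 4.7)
The plan is the standard ``concentration at infinity'' splitting: given a bounded sequence $\{u_k\} \subset H^1(\bn)$ with $u_k \rightharpoonup 0$ weakly, I want to show $\int_{\bn} u_k^2 \calu^{p-1} \, \dvg \to 0$. For each $R > 0$ I will split
\[
\int_{\bn} u_k^2 \calu^{p-1} \, \dvg \; = \; \int_{B_R(0)} u_k^2 \calu^{p-1} \, \dvg \; + \; \int_{\bn \setminus B_R(0)} u_k^2 \calu^{p-1} \, \dvg,
\]
and control the two pieces separately.

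For the tail piece I apply H\"older with conjugate exponents $\frac{p+1}{2}$ and $\frac{p+1}{p-1}$ to get
\[
\int_{\bn \setminus B_R(0)} u_k^2 \calu^{p-1} \, \dvg \; \leq \; \|u_k\|_{L^{p+1}(\bn)}^2 \, \left(\int_{\bn \setminus B_R(0)} \calu^{p+1} \, \dvg\right)^{\!\frac{p-1}{p+1}}.
\]
Since $\|u_k\|_{L^{p+1}(\bn)}$ is uniformly bounded by Poincar\'e--Sobolev \eqref{S-inq} and $\calu \in L^{p+1}(\bn)$, the second factor can be made arbitrarily small by choosing $R$ large, giving a tail estimate $o_R(1)$ uniform in $k$.

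For the local piece, since $\calu$ is smooth and radially decreasing, $\calu^{p-1}$ is bounded on $B_R(0)$. Thus
\[
\int_{B_R(0)} u_k^2 \calu^{p-1} \, \dvg \; \leq \; \|\calu^{p-1}\|_{L^\infty(B_R(0))} \int_{B_R(0)} u_k^2 \, \dvg.
\]
Because $B_R(0)$ is a bounded smooth domain in the Riemannian manifold $\bn$, the classical Rellich--Kondrachov theorem (for bounded domains on Riemannian manifolds) gives a compact embedding $H^1(B_R(0)) \hookrightarrow L^2(B_R(0))$, so $u_k \to 0$ in $L^2(B_R(0))$ and the local piece vanishes as $k \to \infty$ for each fixed $R$. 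A standard diagonal/$\eps$--$R$--$k$ argument then finishes the proof.

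The only mild subtlety — not really a hard obstacle — is to verify that $\calu \in L^{p+1}(\bn)$ and to know that the Rellich theorem on a geodesic ball in the Poincar\'e disk model is in force; the former is immediate from $\calu$ being an extremizer of \eqref{S-inq} (so $\|\calu\|_{L^{p+1}(\bn)}^{p+1} = S_{n,p,\lambda}(\bn)^{(p+1)/(p-1)}$), and the latter is classical since $B_R(0)$ has smooth boundary and finite hyperbolic volume.
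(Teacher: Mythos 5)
Your argument is correct, but it is organized quite differently from the paper's: the paper disposes of this lemma in two lines by recalling from \cite{MS} that $\calu$ is bounded and tends to $0$ as $d(x,0)\to\infty$, and then invoking \cite[Proposition~2.2]{DG2}, which gives compactness of $H^1(\bn)\hookrightarrow L^2_{V}(\bn)$ for such weights $V$. You instead give a self-contained cut-off argument: Rellich--Kondrachov on the geodesic ball handles the local part (this is fine, since $B_R(0)$ is a Euclidean ball compactly contained in $\Be^n$ on which the hyperbolic and Euclidean $H^1$ norms are equivalent), and for the tail you use H\"older with exponents $\frac{p+1}{2},\frac{p+1}{p-1}$ together with $\calu\in L^{p+1}(\bn)$ and the uniform $L^{p+1}$ bound from \eqref{S-inq}. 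Note that your tail estimate only uses the integrability of $\calu^{p+1}$, not pointwise decay, whereas the cited proposition runs on the simpler observation that $\calu^{p-1}\le\eps$ outside a large ball, so that the tail is controlled by $\eps\,\sup_k\|u_k\|^2_{L^2(\bn)}$ (finite since $\lambda<\frac{(n-1)^2}{4}$ under {\bf(H1)} makes $H^1(\bn)\hookrightarrow L^2(\bn)$ bounded). Both routes rest on the same two pillars (local compactness plus smallness of the weighted tail); yours trades the external citation for a short explicit computation, which is a perfectly acceptable, and arguably more transparent, alternative.
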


\begin{proof}

It follows from \cite{MS} that  $$\mathcal{U} (x) \rightarrow 0,\  \mbox{as} \ d(x, 0) \rightarrow \infty,$$ and $\calu \in L^{\infty}(\bn).$ Therefore using \cite[Proposition~2.2]{DG2} the proof 
follows. 

\end{proof}

\begin{lemma} \label{compact self adjoint operator}
Let $n \geq 3$ and  $\lambda, p$ satisfies the hypothesis {\bf(H1)}, then $\mathcal{L}^{-1}$ is well-defined, compact, self-adjoint and positive operator  on $L^2_{\calu^{p-1}}(\bn)$. In particular, there exists a countable orthonormal basis $\{\Psi_i\}_{i=1}^{\infty}$ of $L^2_{\calu^{p-1}}(\bn)$ consisting of eigen-vectors of $\mathcal{L}^{-1}$ with corresponding eigenvalues $\{\eta_i\}_{i=1}^{\infty}$ such that $\eta_i \searrow 0$ as $i \rightarrow \infty.$

\end{lemma}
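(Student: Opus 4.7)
The plan is to realize $\mathcal{L}^{-1}$ as the Riesz map associated with the bilinear form $\langle \cdot, \cdot \rangle_\lambda$ on the Hilbert space $(H^1(\bn), \|\cdot\|_\lambda)$, and then read off the required properties from Lemma \ref{compact embedding}. Under hypothesis {\bf (H1)}, we have the strict inequality $\lambda < (n-1)^2/4$, so by Poincar\'e's inequality $\|\cdot\|_\lambda$ is equivalent to the standard $H^1$ norm, making $(H^1(\bn), \langle\cdot,\cdot\rangle_\lambda)$ a genuine Hilbert space. For any $f \in L^2_{\calu^{p-1}}(\bn)$, consider the linear functional $\Lambda_f: v \mapsto \int_{\bn} f v \, \calu^{p-1} \, \dvg$ on $H^1(\bn)$. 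By Cauchy-Schwarz in the weighted space and the continuous embedding $H^1(\bn) \hookrightarrow L^2_{\calu^{p-1}}(\bn)$ provided by Lemma \ref{compact embedding}, we have $|\Lambda_f(v)| \leq \|f\|_{L^2_{\calu^{p-1}}} \|v\|_{L^2_{\calu^{p-1}}} \leq C\, \|f\|_{L^2_{\calu^{p-1}}} \|v\|_\lambda$, so $\Lambda_f$ is bounded. Riesz representation yields a unique $u =: Tf \in H^1(\bn)$ with
\begin{equation*}
\langle Tf, v \rangle_\lambda = \int_{\bn} f\, v\, \calu^{p-1} \, \dvg \qquad \text{for all } v \in H^1(\bn),
\end{equation*}
which is exactly the weak formulation of $-\Delta_{\bn} (Tf) - \lambda\, Tf = \calu^{p-1} f$, i.e.\ $\mathcal{L}(Tf) = f$. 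Thus $T = \mathcal{L}^{-1}$ is a well-defined bounded linear operator $L^2_{\calu^{p-1}}(\bn) \to H^1(\bn)$.

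Next I would check the three qualitative properties. Compactness of $T$ viewed as an operator from $L^2_{\calu^{p-1}}(\bn)$ to itself follows by factoring it as the bounded map $L^2_{\calu^{p-1}} \to H^1$ composed with the compact embedding $H^1(\bn) \hookrightarrow L^2_{\calu^{p-1}}(\bn)$ supplied by Lemma \ref{compact embedding}. For self-adjointness, set $u = Tf$ and $w = Tg$ and use the symmetry of $\langle\cdot,\cdot\rangle_\lambda$ together with the defining identity twice:
\begin{equation*}
\langle Tf, g \rangle_{L^2_{\calu^{p-1}}} = \int_{\bn} u\, g\, \calu^{p-1} \, \dvg = \langle w, u \rangle_\lambda = \langle u, w \rangle_\lambda = \int_{\bn} f\, w\, \calu^{p-1}\, \dvg = \langle f, Tg \rangle_{L^2_{\calu^{p-1}}}.
\end{equation*}
Positivity is obtained by setting $g = f$ above:
\begin{equation*}
\langle Tf, f \rangle_{L^2_{\calu^{p-1}}} = \langle Tf, Tf \rangle_\lambda = \|Tf\|_\lambda^2 \geq 0,
\end{equation*}
with equality forcing $Tf = 0$; testing the defining identity against any $v \in H^1$ then forces $f = 0$ (since $\calu > 0$ on $\bn$), so $T$ is injective and strictly positive.

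Having established that $T$ is compact, self-adjoint and positive on the Hilbert space $L^2_{\calu^{p-1}}(\bn)$, the conclusion follows from the classical spectral theorem for compact self-adjoint operators: there is an orthonormal basis $\{\Psi_i\}_{i=1}^\infty$ of $L^2_{\calu^{p-1}}(\bn)$ consisting of eigenvectors of $T$ with eigenvalues $\eta_i > 0$ accumulating only at $0$, so $\eta_i \searrow 0$. No step here is truly delicate; the entire argument is an assembly of Riesz representation and the spectral theorem, and the only ingredient specific to the hyperbolic setting is the compactness of the embedding $H^1(\bn) \hookrightarrow L^2_{\calu^{p-1}}(\bn)$, which has been established in the previous lemma using the decay of $\calu$ at infinity.
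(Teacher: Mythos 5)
Your proposal is correct and follows essentially the same route as the paper: realize $\mathcal{L}^{-1}$ via the Riesz representation theorem on $(H^1(\bn),\|\cdot\|_\lambda)$, obtain compactness from the embedding of Lemma \ref{compact embedding}, get self-adjointness and positivity from the symmetry and nonnegativity of the bilinear form, and conclude with the spectral theorem. The only cosmetic difference is that the paper bounds the functional $v\mapsto\int f v\,\calu^{p-1}\dvg$ by a three-exponent H\"older argument combined with the Poincar\'e--Sobolev inequality, whereas you use Cauchy--Schwarz in the weighted space plus the continuity of the embedding, which is equivalent.
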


\begin{proof}

We first claim that $\mathcal{L}^{-1} : L^2_{\calu^{p-1}}(\bn) \rightarrow L^2_{\calu^{p-1}}(\bn)$ is a bounded, linear, 
compact operator. The proof follows from the Poincar\'e-Sobolev inequality, Lemma \ref{compact embedding} and the identity $\frac{1}{2} + \frac{p-1}{2(p+1)} + \frac{1}{p+1} = 1.$ 
Indeed, for every $f \in L^2_{\calu^{p-1}}(\bn)$ and $\psi \in H^1(\B^n),$ using H\"older inequality we have
\begin{align} \label{p1}
\int_{\B^n} f \psi  \, \mathcal{U}^{p-1} \, \dvg &= \int_{\B^n} \mathcal{U}^{\frac{p-1}{2}}f  \cdot  \, \mathcal{U}^{\frac{p-1}{2}} \cdot \psi \, \dvg \notag\\
&\leq \left(\int_{\B^n} f^2 \,  \mathcal{U}^{p-1} \, \dvg \right)^{\frac{1}{2}} \left(\int_{\B^n} \,  \mathcal{U}^{p+1} \, \dvg\right)^{\frac{p-1}{2(p+1)}} \left(\int_{\B^n} |\psi|^{p+1}\, \dvg \right)^{\frac{1}{p+1}}. \notag \\
&\leq \| f\|_{L^2_{\calu^{p-1}}(\bn)} \| \psi \|_{\lambda},
\end{align}
where we have used $\| \mathcal{U} \|_{\lambda}^2 = \| \mathcal{U} \|_{p+1}^{p+1} = S_{n,p,\lambda}(\B^n)^{\frac{p+1}{p-1}}.$
Hence for every $\lambda < \frac{(n-1)^2}{4}, \psi \mapsto \int_{\bn} f \psi \calu^{p-1}\ \dvg$ is an element of $H^{-1}(\B^n)$ 
 and by Riesz representation theorem, for every $f \in L^2_{\calu^{p-1}}(\bn),$ there exists a solution $\phi \in H^1(\B^n)$ to
\begin{align*}
-\Delta_{\bn} \phi - \lambda \phi =  \mathcal{U}^{p-1}f, \ \ \mbox{in} \ \B^n.
\end{align*}
Moreover, such solutions are unique and can be seen by applying \eqref{p1} to the solution $\phi$ 
\begin{align*}
\|\phi\|_{\lambda} \leq \|f\|_{L^2(\B^n, \, \mathcal{U}^{2^{\star}-2} \, \dvg)}.
\end{align*}
If we define the continuous map $L^2_{\calu^{p-1}}(\bn)  \mapsto H^1(\B^n) \subset L^2_{\calu^{p-1}}(\bn)$ by 
$T(f) = \phi$ then for any $\psi \in H^1(\B^n),$
\begin{align} \label{p2}
\int_{\B^n} f \psi  \, \mathcal{U}^{p-1} \ \dvg= \int_{\B^n} (\langle \phi, \psi \rangle_{\bn} - \lambda \phi \psi) \ \dvg=
\int_{\B^n} (-\Delta \phi - \lambda \phi) \psi \ \dvg.
\end{align}
As a result, $f \mathcal{U}^{p-1} = (-\Delta - \lambda) T(f)$ and hence $T = \mathcal{L}^{-1}.$

 By Lemma~\ref{compact embedding}, $\mathcal{L}^{-1}$ viewed as an operator on  $L^2_{\calu^{p-1}}(\bn)$ is compact  
and by symmetry in \eqref{p2},  it is self-adjoint. Hence the spectral theory of compact self-adjoint operators applies. One more application of the Poincar\'e-Sobolev inequality implies that for $\lambda < \frac{(n-1)^2}{4},$ 
\begin{align*}
 \langle T(f), f \rangle_{L^2_{\calu^{p-1}}(\bn)}=\int_{\B^n} f \phi \, \mathcal{U}^{p-1} \dvg \, = \, \int_{\B^n} |\nabla \phi|^2 \, \dvg \, - \,  \lambda \int_{\B^n} \phi^2 \, \dvg \geq 0,
\end{align*}
 and equality holds if and only if $\phi \equiv 0  \equiv f$ proving $\mathcal{L}^{-1}$ is positive.

\end{proof}

\noindent
{\bf Proof of Proposition~\ref{eigen value lemma}:}
\begin{proof}
Using  Lemma~\ref{compact self adjoint operator}, we conclude that all  the eigenvalues of $\mathcal{L}^{-1}$ are real, positive, and the corresponding eigenfunctions make up an 
orthonormal basis of $L^2_{\calu^{p-1}}(\bn).$ Also, observe that for $\eta \neq 0,$ we have $\mathcal{L}^{-1} v = \eta \, v$ if and only if
 $\mathcal{L}\, v = \mu \, v$ where $\mu = \frac{1}{\eta}.$ Hence there exists a non-decreasing sequence of positive eigenvalues $\mu_i$  and the corresponding eigenfunctions $ \Psi_i \in H^1(\bn)$ of $\mathcal{L}$ such that 
 the first eigenvalue is simple and 
 the eigenvalues are characterised by the Rayleigh quotient
$$
\mu_i \, = \, \inf_{\Psi \in H^1(\bn), \Psi \perp \{ \Psi_1,  \ldots \Psi_{i-1}\}} \dfrac{\|\Psi\|_{\lambda}^2} {\int_{\bn}  \Psi^2 \calu^{p-1} \, \dvg}.
$$  
where $\perp$ denotes the orthogonality in $L^2_{\calu^{p-1}}(\bn).$
 
 \medskip 
 
 Now we shall derive explicitly the first and the second eigenvalues. 
 
 \medskip 
 
{\bf First Eigenvalue:}   First we observe that  the equation 
\begin{align} \label{evp_mu}
- \De_{\bn} \Psi - \lambda \, \Psi \, = \, \mu \, \calu^{p-1} \, \Psi,
\end{align}
does not admit a non-trivial solution for $\mu < 1$. Indeed, if \eqref{evp_mu} admits a solution $\phi,$ then multiplying \eqref{evp_mu} by $\phi,$ integration by parts, H\"{o}lder and Poincar\'{e}-Sobolev inequality give
$$\|\phi\|_{\lambda}^2\leq\mu\left(\int_{\bn}\calu^{p+1}dv_{\bn}\right)^\frac{p-1}{p+1}\left(\int_{\bn}|\phi|^{p+1}dv_{\bn}\right)^\frac{2}{p+1}=\mu  S_{n, p, \lambda}(\bn)\left(\int_{\bn}|\phi|^{p+1}dv_{\bn}\right)^\frac{2}{p+1}.$$
Since $\mu < 1$, we get
$$\frac{\|\phi\|_{\lambda}^2}{\left(\int_{\bn}|\phi|^{p+1}dv_{\bn}\right)^\frac{2}{p+1}}<S_{n,p,\lambda}(\bn)$$
 leading to a contradiction to the definition of $S_{n,p, \lambda}(\bn)$. Hence the claim follows.   
Using the above observation and the fact that $\calu$ satisfies \eqref{extremal-critical} and the first eigenvalue is simple, we conclude that 
$$
\mu_1 = 1, 
$$
  and the corresponding eigenfunction $\Psi_1 = \calu.$
 
 \medskip 
 
 {\bf Second Eigenvalue :}  In \cite{DG2}, it was shown that all the solutions to 
 
\begin{equation*}
 -\De \Psi - \lambda \Psi \, = \, p\,  \calu^{p-1} \, \Psi
\end{equation*}
 
 are linear combination of  $\Phi_i(x) := \frac{d}{dt} \Big|_{t=0} \mathcal{U} \circ \tau_{t e_i} , \ \ 1 \leq i \leq n.$ To show $\mu_2 = p$ it is enough to prove that \eqref{evp_mu} does not admit any nontrivial solution for $1 < \mu < p.$
We denote the positive half of $\bn$ in the $x_1$-direction by $\B^n_{+} \, = \, \{x \in \bn : x_1 > 0 \}$  and consider the eigenvalue problem 
\begin{align}\label{evp_mu2}
 -\De \Psi - \lambda \Psi = \bar{\mu}  \, \calu^{p-1} \, \Psi \quad \mbox{in} \ \B^n_+, \ \Psi \in H^1_0(\bn_+),
 \end{align}
where $H_0^1(\bn_+)$ denotes the closure of $C_c^{\infty}(\bn_+)$ with respect to the norm $\|\cdot\|_{\lambda}.$ Taking into account the compact embedding $H^1_0(\B^n_+) \hookrightarrow L^2_{\calu^{p-1}}(\B^n_+),$ which follows from Lemma \ref{compact embedding}, and by similar reasoning as above we conclude that the operator $\left((-\Delta - \lambda)/\calu^{p-1}\right)^{-1}$ is a compact, self-adjoint operator on $L^2_{\calu^{p-1}}(\B^n_+).$ Moreover, the first eigenvalue is simple and the first eigen function does not change sign. $\calu$ being positive, all the other eigenfunctions must change the sign.

Since $\Phi_1 \in H^1_0(\bn_+),$ does not change sign in $\bn_+$ and satisfies  \eqref{evp_mu2}
with $\bar{\mu} = p$ we conclude, the first eigenvalue of \eqref{evp_mu2} is $\bar{\mu}_1 = p.$

 \medskip 
 
 Now let us assume that $\psi \in H^1(\B^n)$ is a solution to 
 
 \begin{equation}\label{eqrad}
 -\De \psi - \lambda \, \psi = \mu \, \calu^{p-1} \, \psi \quad \mbox{in} \ \bn \quad \mbox{with} \ \mu < p.
 \end{equation}
 Let us define  $\tilde{\psi}(x) := \psi (x_1, x_2, \ldots, x_n) \, - \, \psi (-x_1, x_2, \ldots, x_n),$ then clearly $\tilde{\psi} \in H^1_0(\B^n_+) $ and solves 
 \begin{align*}
 -\De \tilde{\psi_2}(x)  \, - \, \lambda \, \tilde{\psi_2}(x) = \mu \, \calu^{p-1} \, \tilde{\psi_2}(x) \quad \mbox{in} \ \B^n_+,
 \end{align*}
 where $\mu < p.$  This would contradict the definition of first eigenvalue in $\B^n_+$ unless $ \tilde{\psi}(x) \equiv 0,$ i.e., 
 $\psi$ is symmetric about the $x_1$- axis. Repeating this process in all directions we conclude that $\psi$ is radially symmetric about the origin.  It is proved in
\cite[Theorem~3.1]{DG2} that $\psi \, \equiv \, 0$ whenever $\psi \in H^1_{r} (\bn),$ and satisfies \eqref{eqrad}.
  
 This concludes $\mu_2 = p$ and by non-degeneracy result of \cite{DG2} the eigenfunctions corresponding to $\mu_2$ are linear span of $\Phi_i,$ $i = 1, \dots, n$. This completes the proof of the proposition. 
 \end{proof}

%%%

\medskip

\begin{remark}\label{r:4-6-1}
{\rm 
More generally one can consider the operator $(-\Delta_{\bn} - \lambda)/\mathcal{U}_{b}^{p-1},$ where $\mathcal{U}_b = \mathcal{U} \circ \tau_b.$  It turns out that the eigenvalues of the operator does not depend on  $b$ and hence without loss of generality we assume $b = 0.$ Indeed the eigenvalues of the operator $(-\Delta_{\bn} - \lambda)/\mathcal{U}_{b}^{p-1},$ are characterised by 
$$
\mu_i \, = \, \inf_{\Psi \in H^1(\bn), \Psi \perp \{ \Psi_1,  \ldots \Psi_{i-1}\}} \dfrac{\|\Psi\|_{\lambda}^2} {\int_{\bn}   \Psi^2 \,\calu_b^{p-1} \dvg},
$$
where $\Psi_i$ are the eigenfunctions of the operator $(-\Delta_{\bn} - \lambda)/\mathcal{U}_{b}^{p-1},$ corresponding to the eigenvalues $\mu_i$. Since $\phi \in H^1(\bn)$ if and only if 
$\phi_{-b} = \phi \circ \tau_{-b} \in H^{1}(\bn)$ and notice that the following two identities hold

$$
\int_{\bn} \left( |\nabla_{\bn} (\phi_{-b})|^2 \, - \, \lambda (\phi_{-b})^2 \right) \, \dvg \, = \, \int_{\bn} \left( |\nabla_{\bn} \phi|^2 \, - \, \lambda \phi^2 \right) \, \dvg,
$$
and 
$$
\int_{\bn} \calu_{b}^{p-1} \, \phi^2 \, \dvg = \int_{\bn} \calu^{p-1} \, (\phi_{-b})^2 \, \dvg.
$$
The above two identities immediately establish that the first eigenvalue does not depend on $b \in \bn.$ Using Lemma~\ref{lemma1}, we conclude 
$$
\calu \, \perp \, \Phi_i \quad \mbox{if and only if} \quad \calu_b \, \perp \, (\Phi_b)_i, 
$$
where $(\Phi_b)_i = \frac{d}{dt} |_{t =0} (\calu_b \circ \tau_{te_i}),$ and hence the second eigenvalue also does not depend on $b.$ 
}

\end{remark}
\medskip
\begin{remark}
{\rm 
Let us make a small remark about Bianchi-Egnell's result on the eigenvalues in the Euclidean case. In \cite{BE}, the authors considered the normalization $\|\nabla U\|_{L^2(\rn)} = 1$ for which one gets the first and the second eigenvalues  $S(\rn)^{\frac{\tstar}{2}}$ and $(\tstar - 1)S(\rn)^{\frac{\tstar}{2}}$ respectively. In our case, we used the normalization $\|\calu\|_{\lambda}^2 = \| \calu \|_{L^{p+1}(\bn)}^{p+1} = S_{n,p,\lambda}(\bn)^{\frac{p+1}{p-1}}.$ The later convention makes the notational complications relatively better when studying the stability from the Euler-Lagrange point of view. 
}
\end{remark}

\medskip

\subsection{Comparison of best Sobolev and Poincar\'{e}-Sobolev constants}
The next proposition validates the non-existence of the Aubin-Talenti bubbles in the critical case provided $n\geq 4$ and $\lambda$ is in {\it good range \rm}. We remark that for $n = 3$ the following lemma is not true \cite{BFLo} and for $n \geq 4$ we borrow the idea of \cite{TT}.

\begin{proposition}\label{PEUCS}
Let $n \geq 4,\, p = \tstar-1$ and $\frac{n(n-2)}{4}< \lambda<\frac{(n-1)^{2}}{4}$. Then $S_{n,p, \lambda}(\B^{n})<S(\R^{n})$, where $S(\R^{n})$ and $S_{n, p,\lambda}(\B^n)$ are the best Sobolev and Poincar\'e-Sobolev constants respectively.
%$$S(\R^{n})=\inf \limits_{u \in C^{\infty}_{c}(\R^{n})\setminus\{0\}}\frac{\displaystyle \int \limits_{\R^{n}}|\nabla u|^{2}~dx}{\displaystyle \left(~\int \limits_{\R^{n}}|u|^{2^{\star}}~dx\right)^{2/2^{\star}}}.$$
%Here $2^{\star}=\frac{2n}{n-2}$ is the critical Sobolev exponent. 
\end{proposition}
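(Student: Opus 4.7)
The plan is to reduce the problem to a weighted Sobolev quotient on the Euclidean unit ball $\Be^n$ via the conformal change of metric already exploited at the PDE level in the excerpt, and then to beat the Euclidean Sobolev constant by testing against a localised Aubin--Talenti bubble concentrated at the origin, in the spirit of Brezis--Nirenberg. For $u \in H^1(\bn)$, setting $v(x) = \bigl(2/(1-|x|^2)\bigr)^{(n-2)/2}\,u(x)$, the conformal invariance immediately yields $\int_{\bn} |u|^{\tstar}\,\dvg = \int_{\Be^n}|v|^{\tstar}\,dx$, and a short integration by parts (combined with the conformal identity for the Yamabe operator $-\Delta_{\bn} - \tfrac{n(n-2)}{4}$) gives
\begin{equation*}
\int_{\bn}\bigl(|\nabla_{\bn} u|^2 - \lambda u^2\bigr)\,\dvg
= \int_{\Be^n}\bigl(|\nabla v|^2 - a(x)\,v^2\bigr)\,dx,
\qquad a(x) = \frac{4\lambda - n(n-2)}{(1-|x|^2)^2}.
\end{equation*}
Under the hypothesis $\lambda > n(n-2)/4$ one has $a(x) \geq \alpha := 4\lambda - n(n-2) > 0$ throughout $\Be^n$, so the proposition reduces to producing a single $v \in H_0^1(\Be^n)\setminus\{0\}$ whose weighted Rayleigh quotient $\mathcal{Q}(v) := \int_{\Be^n}(|\nabla v|^2 - a(x) v^2)\,dx \big/ \bigl(\int_{\Be^n}|v|^{\tstar}\,dx\bigr)^{2/\tstar}$ is strictly less than $S(\rn)$.

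As competitor I would take the standard localised Aubin--Talenti profile
\begin{equation*}
v_{\eps}(x) = \eta(x)\,\frac{\eps^{(n-2)/2}}{(\eps^2 + |x|^2)^{(n-2)/2}},\qquad \eps \to 0^+,
\end{equation*}
where $\eta \in C_c^{\infty}(\Be^n)$ equals $1$ in a neighbourhood of the origin. Rescaling $y = x/\eps$ and using the usual Brezis--Nirenberg bookkeeping produces the expansions $\int_{\Be^n}|\nabla v_\eps|^2\,dx = K_1 + O(\eps^{n-2})$ and $\bigl(\int_{\Be^n}|v_\eps|^{\tstar}\,dx\bigr)^{2/\tstar} = K_2 + O(\eps^n)$ with $K_1/K_2 = S(\rn)$, together with
\begin{equation*}
\int_{\Be^n} v_\eps^2\,dx =
\begin{cases}
C_n\,\eps^2 + o(\eps^2), & n \geq 5, \\[2pt]
C_4\,\eps^2 |\log \eps|\,(1+o(1)), & n = 4,
\end{cases}
\end{equation*}
the logarithmic factor in dimension four arising because $\int (1+|y|^2)^{-(n-2)}\,dy$ is exactly logarithmically divergent only when $n=4$. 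Since $a(x) \geq \alpha$ on $\mathrm{supp}\,v_\eps$, this gives $\int_{\Be^n} a(x) v_\eps^2\,dx \geq \alpha \int_{\Be^n} v_\eps^2\,dx$, of the same order.

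Plugging these expansions into $\mathcal{Q}(v_\eps)$, the gain from $-\int a(x) v_\eps^2\,dx$ is of order $\eps^2$ (resp.\ $\eps^2|\log\eps|$), while the cut-off error in the numerator is $O(\eps^{n-2})$. For $n \geq 5$ one has $n-2 > 2$ and the gain strictly dominates, yielding $\mathcal{Q}(v_\eps) \leq S(\rn) - c\,\eps^2 + o(\eps^2) < S(\rn)$ for all small $\eps$; for the borderline $n = 4$ the cut-off error is also $\eps^2$ but the extra logarithmic blow-up of $\int v_\eps^2$ still makes the perturbation dominate, and $\mathcal{Q}(v_\eps) \leq S(\rn) - c\,\eps^2|\log\eps| + O(\eps^2) < S(\rn)$. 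The main obstacle is precisely this $n=4$ case, where the exact logarithmic divergence has to be extracted rather than bounded crudely; this is also what forces the strict inequality $\lambda > n(n-2)/4$ in the hypothesis (at the threshold one would have $\alpha = 0$ and no gain at all), and is the reason the statement excludes $n=3$, where the corresponding integral is only of order $\eps$ --- too small to beat the cut-off error $O(\eps^{n-2}) = O(\eps)$, consistent with the well-known equality $S_{3,p,\lambda}(\B^3) = S(\R^3)$ recorded in the excerpt.
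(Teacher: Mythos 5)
Your proposal is correct and is essentially the paper's own argument (following Tertikas--Tintarev): the same conformal reduction of the quotient to $\int_{\Be^n}(|\nabla v|^2-\tilde h_\lambda v^2)\,dx$ on the Euclidean ball with $\tilde h_\lambda(x)=\frac{4\lambda-n(n-2)}{(1-|x|^2)^2}$, followed by testing a cut-off Aubin--Talenti bubble $u_\eps=\eta\,U_\eps$ and comparing the $O(\eps^{n-2})$ cut-off error with the gain of order $\eps^2$ for $n\geq 5$ and $\eps^2\log(1/\eps)$ for $n=4$. The only cosmetic difference is that you bound $\tilde h_\lambda$ below by the constant $4\lambda-n(n-2)$ instead of extracting its value at the origin asymptotically, which changes nothing in the conclusion.
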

\begin{proof}
For $p=\tstar-1$ we have
\begin{align*}
S_{n, p,\lambda}(\B^{n})=\inf \limits_{u \in C^{\infty}_{c}(\B^{n})\setminus\{0\}}\frac{\displaystyle \int \limits_{\B^{n}}\left(|\nabla_{\B^{n}}u|^{2}-\lambda u^{2}\right)~ \dvg}{\displaystyle \left(~\int \limits_{\B^{n}}|u|^{2^{\star}}~\dvg\right)^{2/2^{\star}}}=\inf \limits_{u \in C^{\infty}_{c}(\Be^{n})\setminus\{0\}}\frac{\displaystyle \int \limits_{\Be^{n}}\left(|\nabla u|^{2}-\tilde{h}_{\lambda}u^{2}\right)~{\rm d}x}{\displaystyle \left(~\int \limits_{\Be^{n}}|u|^{2^{\star}}~{\rm d}x\right)^{2/2^{\star}}},
\end{align*}
where $\displaystyle \tilde{h}_{\lambda}(x)=\frac{4\lambda-n(n-2)}{\left(1-|x|^{2}\right)^{2}}$. We test the localized version of the Aubin-Talenti bubble $U:=U[0,1]$ denoted by
\begin{align*}
U_{\eps}(x):=\eps^{-\frac{n-2}{2}}U\left(\frac{x}{\epsilon}\right)= (n(n-2))^{\frac{n-2}{4}}\frac{\eps^{\frac{n-2}{2}}}{(\eps^{2} + |x|^2)^{\frac{n-2}{2}}}, \qquad  x \in \R^n, \eps>0.
\end{align*}
Consider a cut-off function $\eta\in C^\infty_c(\Be^{n})$ such that $\eta(x)\equiv1$ in a neighborhood of $0$ and the test functions $u_{\eps}\in D^{1,2}(\B^{n})$ defined by $u_{\eps}(x):=\eta(x)U_{\eps}(x)$ for $x\in \Be^{n}$. We obtain as $\eps \to 0$
\begin{align*}
\int\limits_{\Be^{n}} u_{\eps}^{2^{\star}}~{\rm d}x&=\int \limits_{B(0,\frac{1}{2})} u_{\eps}^{2^{\star}}~{\rm d}x+ \int \limits_{\Be^{n}\setminus B(0,\frac{1}{2})} u_{\eps}^{2^{\star}}~{\rm d}x =\int\limits_{B(0,\frac{1}{2}\eps^{-1})}U^{2^{\star}}~{\rm d}x~+\int \limits _{B(0,\frac{1}{2}\eps^{-1})^c} \eta(\eps x)^{2^{\star}} U^{2^{\star}}~{\rm d}x\\
&=\int\limits_{\R^{n}} U^{2^{\star}}~{\rm d}x+ O \left(\eps^{n}\right).
\end{align*}
Similarly, one also has
\begin{align*}
\int\limits_{\Be^{n}}|\nabla u_{\eps}|^{2}~{\rm d}x &= \int\limits_{B(0,\frac{1}{2})}|\nabla u_{\eps}|^{2}~{\rm d}x +   \int\limits_{\Be^{n}\setminus B(0,\frac{1}{2})}|\nabla u_{\eps}|^{2}~{\rm d}x=\int\limits_{B(0,\frac{1}{2}\eps^{-1})} |\nabla U|^{2}~{\rm d}x+ O\left(\eps^{n-2}\right)\\
&=\int\limits_{\R^{n}} |\nabla U|^{2}~{\rm d}x+ O\left(\eps^{n-2}\right).
\end{align*}
For the lower order term, we obtain as $\eps\to 0$
\begin{align*}
\int\limits_{\Be^{n}} \tilde{h}_{\lambda}u_{\eps}^2~{\rm d}x =
 \left \{ \begin{array} {lc}
           \left(4\lambda-n(n-2)\right)\eps^{2}  \left[~\displaystyle\int \limits_{\R^{n}} U^{2}~{\rm d}x  + o(1)\right] \quad    &\hbox{ for }~ n\geq 5, \\
           \left(4\lambda-n(n-2)\right)\eps^{2} \log \left( \dfrac{1}{\eps}\right) \left[\omega_{n-1} + o(1) \right] \quad  &\hbox{ for  }~n =4.
            \end{array} \right.
\end{align*}
Combining the above estimates, we obtain as $\eps\to 0$,
\[\begin{split}
&S_{n, p,\lambda}(\B^{n})\leq ~{\left( \displaystyle \int \limits_{~\Be^{n}}\left(|\nabla u_{\eps}|^{2}-\tilde{h}_{\lambda}u_{\eps}^{2}\right)~{\rm d}x\right)}{\displaystyle \left(~\int \limits_{\B^{n}}u_{\eps}^{2^{\star}}~{\rm d}x\right)^{-2/2^{\star}}}=\\
&S(\R^{n})- \left(~\displaystyle\int\limits_{\R^{n}} U^{2^{\star}}~{\rm d}x\right)^{-2/2^{\star}}\times
  \begin{cases}
  \left(4\lambda-n(n-2)\right)\eps^{2}\left[~\displaystyle\int\limits_{\R^{n}}U^{2}~{\rm d}x+ o(1)\right] ~\hbox{ for }~ n\geq 5, \\ 
   \left(4\lambda-n(n-2)\right)\eps^{2} \log\left( \dfrac{1}{\eps}\right)\left[\omega_{n-1} + o(1) \right]~\hbox{ for  }~n =4.
        \end{cases}
\end{split}\]
Thus for $\lambda>\frac{n(n-2)}{4}$ it follows that $S_{n,p, \lambda}(\B^{n})<S(\R^{n})$.

\end{proof}

\section{Stability \'{a} la Bianchi-Egnell: Proof of Theorem~\ref{maintheorem-1}}\label{proof1}

In this section, we give the proof of Theorem~\ref{maintheorem-1}. The next lemma is crucial for the proof and follows the lines of \cite{BE}.

\subsection{Behaviour near $\mathcal{Z}_0$}
\begin{lemma}\label{almostlemma}
Let $n \geq 3$ and $\lambda,p$ satisfies the hypothesis {\bf (H1)}, then there exists a constant $\alpha > 0$ depending on the dimension $n,\lambda$ and $p$ such that there holds 
\begin{equation*}
 \int \limits_{\B^{n}}\left(|\nabla_{\B^{n}}\phi|^{2}  -\lambda \phi^{2}\right)~\dvg \, - \, S_{n, p,\lambda}(\B^{n})\left(~\int \limits_{\B^{n}}|\phi|^{p+1}~\dvg
 \right)^{\frac{2}{p+1}} \geq \alpha \, \hbox{dist}\left(\phi, \mathcal{Z}_{0}\right)^{2} \, + \, \circ(\hbox{dist}\left(\phi, \mathcal{Z}_{0}\right)^{2}),
\end{equation*}
for all $\phi$ satisfying $\hbox{dist}\left(\phi, \mathcal{Z}_{0}\right) < \|\phi\|_{\lambda}.$

\end{lemma}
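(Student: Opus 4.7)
\emph{Strategy and projection onto $\mathcal{Z}_0$.} I would adapt the Euclidean scheme of Bianchi--Egnell \cite{BE}, with the Aubin--Talenti manifold replaced by $\mathcal{Z}_0$ and the spectral input supplied by Proposition~\ref{eigen value lemma}. First I argue that under $\mathrm{dist}(\phi,\mathcal{Z}_0)<\|\phi\|_\lambda$ the infimum in \eqref{distance} is attained at some $(c,\xi)\in(\mathbb{R}\setminus\{0\})\times\bn$: a minimising sequence $(c_k,\xi_k)$ cannot have $\xi_k\to\partial\bn$, for then $\calu\circ\tau_{\xi_k}\rightharpoonup 0$ in $H^1(\bn)$ (by vanishing under weak convergence along hyperbolic translations) and $\liminf_k\|\phi-c_k\calu\circ\tau_{\xi_k}\|_\lambda^2\ge\|\phi\|_\lambda^2$, contradicting the hypothesis; local compactness yields a minimiser. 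Let $\psi:=\phi-c\,\calu\circ\tau_\xi$; the first-order optimality conditions in the $(n+1)$ parameters give
\[
\langle\psi,\calu\circ\tau_\xi\rangle_\lambda=0,\qquad \langle\psi,\partial_{\xi_i}(\calu\circ\tau_\xi)\rangle_\lambda=0 \quad (i=1,\dots,n).
\]
By Remark~\ref{r:4-6-1} the spectral structure of $(-\Delta_\bn-\lambda)/(\calu\circ\tau_\xi)^{p-1}$ is identical to that at $\calu$, so for notational simplicity I take $\xi=0$, in which case $\partial_{\xi_i}(\calu\circ\tau_\xi)|_{\xi=0}=\Phi_i$ from Proposition~\ref{eigen value lemma}.

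\emph{Second-order expansion of the deficit.} The orthogonality produces the exact identity $\|\phi\|_\lambda^2=c^2 S^{(p+1)/(p-1)}+\|\psi\|_\lambda^2$, with $S:=S_{n,p,\lambda}(\bn)$ and $\|\calu\|_\lambda^2=S^{(p+1)/(p-1)}$. Since $p>1$, $t\mapsto|t|^{p+1}$ is $C^2(\mathbb{R})$ and I Taylor-expand around $c\calu$ to obtain
\[
\int_{\bn}|\phi|^{p+1}\,\dvg = c^{p+1}S^{(p+1)/(p-1)} + (p+1)c^p\!\int_{\bn}\!\calu^{p}\psi\,\dvg + \tfrac{p(p+1)}{2}c^{p-1}\!\int_{\bn}\!\calu^{p-1}\psi^2\,\dvg + o(\|\psi\|_\lambda^2).
\]
The linear term vanishes because $\calu$ solves \eqref{extremal-critical}, whence $\int\calu^p\psi\,\dvg=\langle\calu,\psi\rangle_\lambda=0$. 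Raising this to the power $2/(p+1)$, multiplying by $S$, and using $S\cdot S^{2/(p-1)}=S^{(p+1)/(p-1)}$, subtraction from $\|\phi\|_\lambda^2$ yields
\[
\|\phi\|_\lambda^2 - S\|\phi\|_{L^{p+1}(\bn)}^2 = \|\psi\|_\lambda^2 - p\!\int_{\bn}\!\calu^{p-1}\psi^2\,\dvg + o(\|\psi\|_\lambda^2).
\]

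\emph{Spectral coercivity.} Testing \eqref{extremal-critical} and \eqref{eigenvalue} against $\psi$ shows that the $\|\cdot\|_\lambda$-orthogonalities of $\psi$ to $\calu$ and to each $\Phi_i$ are equivalent to the weighted orthogonalities $\langle\psi,\calu\rangle_{L^2_{\calu^{p-1}}(\bn)}=0=\langle\psi,\Phi_i\rangle_{L^2_{\calu^{p-1}}(\bn)}$. Expanding $\psi=\sum_k a_k\Psi_k$ in the orthonormal eigenbasis of $\mathcal{L}$ supplied by Lemma~\ref{compact self adjoint operator} (normalised so $\|\Psi_k\|_{L^2_{\calu^{p-1}}(\bn)}=1$, and hence $\|\Psi_k\|_\lambda^2=\mu_k$), these orthogonalities force $a_k=0$ for every $k$ with $\mu_k\in\{1,p\}$. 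Denoting by $\mu_\star>p$ the smallest eigenvalue of $\mathcal{L}$ exceeding $p$ (which exists by Proposition~\ref{eigen value lemma} since the second eigenspace is exhausted by $\{\Phi_1,\dots,\Phi_n\}$), one computes
\[
\|\psi\|_\lambda^2 - p\!\int_{\bn}\!\calu^{p-1}\psi^2\,\dvg \,=\, \sum_{\mu_k\ge\mu_\star}a_k^2(\mu_k-p) \,\ge\, \frac{\mu_\star-p}{\mu_\star}\sum_k a_k^2\mu_k \,=\, \frac{\mu_\star-p}{\mu_\star}\|\psi\|_\lambda^2.
\]
Setting $\alpha:=(\mu_\star-p)/\mu_\star>0$ and using $\mathrm{dist}(\phi,\mathcal{Z}_0)=\|\psi\|_\lambda$ gives the claim.

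\emph{Principal difficulty.} The delicate point is the $o(\|\psi\|_\lambda^2)$ remainder in the Taylor expansion of $|c\calu+\psi|^{p+1}$ when $p$ is not an integer, since $t\mapsto|t|^{p+1}$ lacks a bounded third derivative for $1<p<2$. The standard remedy is a pointwise bound by $C|\psi|^{p+1}$ (when $1<p\le 2$) or by $C(\calu^{p-2}|\psi|^3+|\psi|^{p+1})$ (when $p>2$); integrating with the Poincar\'e--Sobolev inequality \eqref{S-inq}, H\"older's inequality (noting $\calu^{p-2}\in L^{(p+1)/(p-2)}(\bn)$ because $\calu\in L^{p+1}(\bn)$), and the boundedness and decay of $\calu$, both contributions are controlled by $\|\psi\|_\lambda^{\min(3,p+1)}=o(\|\psi\|_\lambda^2)$. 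The standing hypothesis $\mathrm{dist}(\phi,\mathcal{Z}_0)<\|\phi\|_\lambda$ is what keeps $\|\psi\|_\lambda$ as the small parameter in this asymptotic; the only other technical issue, ruling out concentration of $\xi_k$ at $\partial\bn$ in Step~1, is routine.
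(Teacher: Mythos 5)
Your proposal is correct and follows essentially the same route as the paper: attainment of the projection onto $\mathcal{Z}_0$ under $\hbox{dist}(\phi,\mathcal{Z}_0)<\|\phi\|_\lambda$, first-order optimality giving orthogonality to $\calu\circ\tau_\xi$ and to the $\Phi_i$ in $L^2_{\calu^{p-1}}$, a second-order Taylor expansion of the $L^{p+1}$-term, and the spectral gap of Proposition~\ref{eigen value lemma}. The only differences are cosmetic — you obtain the coercivity via an explicit eigenfunction expansion where the paper uses the Rayleigh-quotient bound $\int_{\bn}\calu^{p-1}\psi^2\,\dvg\le \|\psi\|_\lambda^2/\mu_3$, and you supply more detail on the $o(\hbox{dist}^2)$ remainder than the paper does.
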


\begin{proof}
We first recall that $\mathcal{Z}_{0}$ is the $(n+1)$ dimensional manifold consisting of $\{ c\, \calu_b : c \in \mathbb{R}\backslash \{0\}, b \in \bn\}$, where $\calu_b:=\calu\circ\tau_b$. We further recall that $\|\calu_b\|^2_{\lambda} = S_{n,p,\lambda}(\bn)^{\frac{p+1}{p-1}}$. For any $\phi\in H^1(\bn)$
\begin{align}
\hbox{dist}\left(\phi, \mathcal{Z}_{0}\right)^{2} \,& = \, \inf_{c \in \mathbb{R}, \,  b \in \bn} \|  \phi \, - \, c\, \calu_b  \|_{\lambda}^2 \notag \\
& = \inf_{c \in \mathbb{R}, \, b \in \bn} \left(\int_{\bn} |\nabla_{\bn} (\phi - c \, \calu_b)|^2 \, \dvg \, - \, \lambda \int_{\bn} (\phi \, -\, c \, \calu_b)^2 \, \dvg \right)\notag \\
&  = \inf_{c \in \mathbb{R}, \, b \in \bn} \left(\int_{\bn}  \left( |\nabla_{\bn} \phi|^2 \, -\, \lambda \phi^2 \right) \, \dvg  \,+ \, c^2  \,  
\int_{\bn} \left( |\nabla_{\bn} \calu_b|^2 \, - \, \lambda \, \calu_b^2 \right) \dvg \notag \right.\\
& \quad\quad\left.- 2 \, c \int_{\bn} \left( \langle\nabla_{\bn} \phi , \nabla_{\bn} \calu_b\rangle_{\bn} \, - \, \lambda \, \phi \, \calu_b \right) \, \dvg \right)\notag \\
&=\inf_{c \in \mathbb{R}, \,  b \in \bn} \left[ \|\phi\|^2_{\lambda} \, + \, c^2 S_{n,p,\lambda}(\bn)^{\frac{p+1}{p-1}}
 - 2 \, c \int_{\bn} \left( \langle\nabla_{\bn} \phi , \nabla_{\bn} \calu_b\rangle_{\bn} \, - \, \lambda \, \phi \, \calu_b \right) \, \dvg  \right]
%& < \int_{\bn} \left( |\nabla_{\bn} \phi|^2 \, - \, \lambda \, \phi^2  \right) \, \dvg.
\end{align}
%Furthermore assuming $\|\calu_b\|^2_{\lambda} = 1,$ we conclude from above 
According to our assumption $\phi\in H^1(\bn), $ satisfies
\begin{align}\label{4-6-3}
\hbox{dist}\left(\phi, \mathcal{Z}_{0}\right)^{2} \, & =\, \inf_{c \in \mathbb{R}, \,  b \in \bn} \left[ \|\phi\|^2_{\lambda} \, + \, c^2 S_{n,p,\lambda}(\bn)^{\frac{p+1}{p-1}}
 - 2 \, c \int_{\bn} \left(\langle\nabla_{\bn} \phi , \nabla_{\bn} \calu_b\rangle_{\bn} \, - \, \lambda \, \phi \, \calu_b \right) \, \dvg  \right] \notag \\
 & < \|\phi\|^2_{\lambda}.
\end{align}
 As a result, the infimum is achieves by some $(c_0, b_0) \in \mathbb{R} \times \bn.$ Indeed, for a minimizing sequence $(c_k,b_k),$ $c_k^2$ being the dominating term, $c_k$ remains bounded. If 
$d(b_k,0) \rightarrow \infty,$ then $\calu_{b_k} : = \calu \circ \tau_{b_k} \rightharpoonup 0$ in $H^1(\bn),$ and therefore the last term in \eqref{4-6-3} converges to zero, contradicting the assumption $\hbox{dist}\left(\phi, \mathcal{Z}_{0}\right) < \|\phi \|_{\lambda}.$  For the same reason
$c_0 \neq 0.$  

We denote $\phi_i:=\frac{d}{dt}|_{t=0}U\circ \tau_{b_0+te_i}$, then  $\phi_i$ are the eigen functions $(-\Delta_{\bn} - \lambda)/\mathcal{U}_{b}^{p-1},$ (see Remark~\ref{r:4-6-1}).
\medskip

Next we claim that $\phi \, - \, c_0 \, \calu_{b_0} \, \perp \calu_{b_0}, \phi_1, \ldots, \phi_n$ in 
$L^2_{\calu_{b_0}^{p-1}} (\bn)$. To this end, we first take variation with respect to $c,$ and we obtain 
\begin{align*}
c_0 S_{n,p,\lambda}(\bn)^{\frac{p+1}{p-1}}\,&  = \, \int_{\bn} \left( \langle\nabla_{\bn} \phi , \nabla_{\bn} \, \calu_{b_0} \rangle_{\bn}\, - \, \lambda \, \phi \, \calu_{b_0} \right) \, \dvg
= \int_{\bn} \phi \, \left( - \De_{\bn} \calu_{b_0} \, - \, \lambda \, \calu_{b_0} \right) \, \dvg \notag \\
& =  \int_{\bn} \phi \, \calu^{p}_{b_0} \, \dvg.
\end{align*}
Therefore from the above computation and the fact that $ \int_{\bn}  \calu^{p+1}_{b_0} \, \dvg = S_{n,p,\lambda}(\bn)^{\frac{p+1}{p-1}},$ we conclude 
$$
\int_{\bn} \left( \phi - c_0 \, \calu_{b_0}\right) \, \calu_{b_0} \,  \calu_{b_0}^{p-1} \, \dvg \, = \, 0.
$$
This proves that $\phi \, - \, c_0 \, \calu_{b_0} \, \perp \calu_{b_0}$ in $L^2_{\calu_{b_0}^{p-1}} (\bn).$ Now taking the variation with respect to $b,$ we get 
for $i =1, \ldots, n,$
\begin{align*}
0 \, & = \, \int_{\bn} \left( \langle\nabla_{\bn} \phi , \nabla \phi_i \rangle_{\bn}\, - \, \lambda \, \phi \, \phi_i \right) \, \dvg 
= \int_{\bn} \phi \left( - \De \phi_i \, - \, \lambda \, \phi_i \right) \, \dvg \notag \\
& = p  \int_{\bn} \phi \,  \, \calu_{b_0}^{p-1} \, \phi_i \, \dvg.
\end{align*}
From above and further using the fact that $\int_{\bn} \calu^{p-1}_{b_0} \, \phi_i \, \dvg = 0$ (which is a consequence of the conformal invariance of the norm) we deduce 
\begin{align*}
\int_{\bn} \left( \phi - c_0 \, \calu_{b_0} \right) \, \phi_i \, \calu^{p-1}_{b_0} \, \dvg \, = \, 0.
\end{align*}
Therefore 
\begin{equation}\label{4-6-4}
\phi - c_0 \, \calu_{b_0} \, \perp \, (T \mathcal{Z}_{0})_{(c_0, b_0)},
\end{equation}
where $(T \mathcal{Z}_{0})_{(c_0, b_0)}$ is tangent space at the point $c_0\calu_{b_0}.$ Now using Proposition~\ref{eigen value lemma} and min-max characterisation 
of eigenvalues we derive 

$$
\mu_3 \leq \dfrac{\| \phi - c_0 \, \calu_{b_0}\|^2_{\lambda}}{\int_{\bn} (\phi - c_0 \, \calu_{b_0})^2 \, \calu_{b_0}^{p-1} \, \dvg}.
$$

Now let us write $\phi - c_0 \, \calu_{b_0} = \mathcal{D} v,$ for some $v \in H^1(\bn)$ with $\| v \|_{\lambda} = 1,$ $v \perp (T \mathcal{Z}_{0})_{(c_0, b_0)},$
and $\mathcal{D} := \, \hbox{dist}\left(\phi, \mathcal{Z}_{0}\right).$ We aim to compute the $\| \phi \|_{L^{p+1}(\bn)}.$ To this end we expand $| \phi |^{p+1}$ using Taylor's series 
\begin{align}
|\phi|^{p+1} \, & = \, | c_0 \, \calu_{b_0} + \cald\, v|^{p+1} = |c_0|^{p+1} \, \calu_{b_0}^{p+1} \, + \, {(p+1)} \, |c_0|^{p-1} c_0 \, \calu_{b_0}^{p} \, \cald v  \notag \\
& \qquad+ \frac{p(p+1)}{2} \, |c_0|^{p-1} \, \calu_{b_0}^{p-1} \, (\cald v)^2 \, + \, \circ(\cald^2) \notag.
\end{align}
Integrating the above identity and using the fact that $v \perp \calu_{b_0}$ in $L^2_{\calu_{b_0}^{p-1}}(\bn),$ we obtain 
\begin{align}\label{2star}
\left( \int_{\bn} |\phi|^{p+1} \, \dvg \right)^{\frac{2}{p+1}} \,&  = \, \left( |c_0|^{p+1} S_{n, p,\lambda}(\bn)^{{\frac{p+1}{p-1}}} \,
+ \, \frac{p(p+1)}{2} \cald^2 \, |c_0|^{p-1} \int_{\bn} \calu^{p-1} \, v^2 \, \dvg \, + \, \circ(\cald^2) \right)^{\frac{2}{p+1}} \notag \\
& \leq \left( |c_0|^{p+1} S_{n, p,\lambda}(\bn)^{{\frac{p+1}{p-1}}} \, + \,  \frac{p(p+1)}{2} \cald^2 \, |c_0|^{p-1} \frac{1}{\mu_3} \, 
+ \, \circ(\cald^2)\right)^{\frac{2}{p+1}} \notag \\
& \leq \left( |c_0|^{p+1} S_{n,p, \lambda}(\bn)^{{\frac{p+1}{p-1}}}\right)^{\frac{2}{p+1}} \notag \\
 & \quad + 
 \frac{2}{p+1} \left(|c_0|^{p+1} S_{n, p,\lambda}(\bn)^{{\frac{p+1}{p-1}}} \right)^{\frac{2}{p+1} -1} \notag 
 \times \left( \frac{p(p+1)}{2} \cald^2 \, |c_0|^{p-1} \frac{1}{\mu_3}  + \circ(\cald^2) \right) \notag \\
& = |c_0|^2 \, (S_{n, p,\lambda}(\bn))^{\frac{p+1}{p-1}-1} \, + \, \frac{p}{\mu_3} \, (S_{n, p,\lambda}(\bn))^{-1} \, \cald^2 \, + \, \circ(\cald^2)\notag\\
&= |c_0|^2 \, (S_{n, p,\lambda}(\bn))^{\frac{p+1}{p-1}-1} \, + \, S_{n, p,\lambda}(\bn)^{-1}\frac{\mu_2}{\mu_3} \cald^2 \, + \, \circ(\cald^2), 
\end{align}
where in the third inequality we have used $(1 + t)^{\alpha} \leq 1 + \alpha t,$  $t >0,\, \alpha \in (0,1)$ and in the last inequality we have used $\mu_2 = p$.

%\begin{equation}
%(S_{n, \lambda}) \| \phi \|^2_{L^{\tstar}} \, &\leq \, |c_0|^2 \, + \, \frac{\mu_2}{\mu_3} \cald^2 \, + \, \circ(\cald^2) \notag \\
%& = \|\phi \|^2_{\lambda} \, - \, \cald^2 \, + \, \frac{\mu_2}{\mu_3} \cald^2 \, + \, \circ(\cald^2), \notag
%\end{align}
%

Using the orthogonality condition we see that $\cald^2 \, = \, \|\phi\|^2_{\lambda} \, - \, |c_0|^2(S_{n, p,\lambda}(\bn))^{\frac{p+1}{p-1}}$.
Indeed, from \eqref{4-6-3}, we have
$$\cald^2 = \|\phi\|^2_{\lambda} \, + \, c_0^2 (S_{n,p, \lambda}(\bn))^{\frac{p+1}{p-1}}
 - 2 \, c_0 \int_{\bn} \left( \langle\nabla_{\bn} \phi , \nabla_{\bn} \calu_{b_0} \rangle_{\bn} \, - \, \lambda \, \phi \, \calu_{b_0} \right) \, \dvg.$$
 Now, 
\begin{align*}
& \ \ \ \ \ \ - 2 \, c_0 \int_{\bn} \left( \langle\nabla_{\bn} \phi , \nabla_{\bn} \calu_{b_0} \rangle_{\bn} \, - \, \lambda \, \phi \, \calu_{b_0} \right) \, \dvg\\
&\quad=-2\int_{\bn} \Big( \langle \nabla_{\bn}(c_0 \, \calu_{b_0} + \mathcal{D} v), \nabla_{\bn}( c_0\calu_{b_0}) \rangle_{\bn}\,-\,\lambda(c_0 \, \calu_{b_0} + \mathcal{D} v)c_0\calu_{b_0} \Big)\,\dvg\\
&\quad=-2\|c_0 \, \calu_{b_0} \|_\lambda^2=-2c_0^2 S_{n, p,\lambda}(\bn))^{\frac{p+1}{p-1}},
\end{align*}
where in the second equality we used the fact that $v \perp c_0 \, \calu_{b_0}$ in $L^2_{\calu_{b_0}^{p-1}}(\bn)$.  

Plugging these in \eqref{2star} yields
$$S_{n, p,\lambda} \| \phi \|^2_{L^{p+1}}\leq \|\phi \|^2_{\lambda} \, - \, \cald^2 \, + \, \frac{\mu_2}{\mu_3} \cald^2 \, + \, \circ(\cald^2).$$
Therefore,
$$
\| \phi \|^2_{\lambda} \, - \, S_{n, p,\lambda}(\bn) \| \phi \|_{L^{p+1}}^2 \geq \cald^2 \left(1 - \frac{\mu_2}{\mu_3} \right) \, + \, \circ(\cald^2),
$$
whenever, $\phi$ satisfying $\hbox{dist}\left(\phi, \mathcal{Z}_{0}\right) < ||\phi||_{\lambda}.$
This completes the proof. 

\end{proof}

The next non-quantitative stability result follows from the Palais-Smale decomposition of 
\cite{BS}.

\begin{lemma}\label{minimisationlemma}
Let $n \geq 3,$ and $\lambda, p$ satisfies the hypothesis {\bf (H1)}. Let $\varphi_k \in H^1(\bn)$ be a sequence  such that 

$$
\| \varphi_k \|_{\lambda}^2 \, = \, S_{n,p,\lambda}(\bn)^{\frac{p+1}{p-1}}, \quad \hbox{and} \quad S_{n, p,\lambda}(\bn) \, \| \varphi_k \|^2_{L^{p+1}(\bn)} \rightarrow  S_{n,p,\lambda}(\bn)^{\frac{p+1}{p-1}}.
$$
Then (upto a subsequence) there exists a sequence $\{ b_k \} \subset \bn$ such that 

\begin{equation}\label{convergence}
\| \varphi_k \, - \, \calu \circ \tau_{b_k} \|_{\lambda} \rightarrow 0. 
\end{equation}

\end{lemma}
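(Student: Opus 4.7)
The plan is to apply the profile decomposition of Bhakta--Sandeep (\cite[Theorem~3.3]{BS}) to the bounded sequence $\{\varphi_k\}$ and then rule out every profile except a single hyperbolic bubble $\calu \circ \tau_{b_k}.$

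First I would observe that $\|\varphi_k\|_\lambda^2 = S_{n,p,\lambda}(\bn)^{\frac{p+1}{p-1}}$ forces boundedness in $H^1(\bn),$ while the second hypothesis rewrites as $\|\varphi_k\|_{L^{p+1}(\bn)}^{p+1} \to S_{n,p,\lambda}(\bn)^{\frac{p+1}{p-1}},$ so the Poincar\'e--Sobolev deficit $\delta(\varphi_k)\to 0.$ Passing to a subsequence, the profile decomposition furnishes a weak limit $\varphi_\infty \in H^1(\bn),$ finitely many nontrivial \emph{hyperbolic bubbles} $\phi^{(j)}\circ \tau_{b_k^{(j)}}$ (each $\phi^{(j)}$ a nonzero finite-energy solution of \eqref{eq1} with $d(b_k^{(i)},b_k^{(j)})\to\infty$ for $i\ne j$), finitely many rescaled-translated Aubin--Talenti bubbles (only in the critical case $p = \tstar-1$), and a remainder $r_k \to 0$ in $L^{p+1}(\bn),$ all asymptotically orthogonal with respect to both $\|\cdot\|_\lambda^2$ and $\|\cdot\|_{L^{p+1}}^{p+1}.$

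Next I would rule out the Aubin--Talenti bubbles. In the critical case each AT bubble contributes at least $S(\rn)^{n/2}$ to the total $\lambda$-norm squared, but the total equals $S_{n,p,\lambda}(\bn)^{n/2},$ and by Proposition~\ref{PEUCS} we have $S_{n,p,\lambda}(\bn) < S(\rn),$ so no AT bubble can survive. Writing $A := S_{n,p,\lambda}(\bn)^{\frac{p+1}{p-1}},$ $a := \|\varphi_\infty\|_{L^{p+1}}^{p+1},$ and $a_j := \|\phi^{(j)}\|_{L^{p+1}}^{p+1},$ the orthogonality identities and the Poincar\'e--Sobolev inequality applied to each profile yield
$$
A = a + \sum_j a_j + o(1), \qquad A^{\frac{2}{p+1}} \ge a^{\frac{2}{p+1}} + \sum_j a_j^{\frac{2}{p+1}}.
$$
Since $\tfrac{2}{p+1}<1,$ the map $t\mapsto t^{\frac{2}{p+1}}$ is strictly subadditive on $(0,\infty),$ so $(a+\sum_j a_j)^{\frac{2}{p+1}} \le a^{\frac{2}{p+1}} + \sum_j a_j^{\frac{2}{p+1}}$ with equality only when at most one term is nonzero. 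Together with the displayed inequalities this collapses the decomposition to a single profile, which saturates Poincar\'e--Sobolev and by Theorem~\ref{MSthm} equals $\pm \calu \circ \tau_\xi$ for some $\xi \in \bn.$ Absorbing $\xi$ into $b_k^{(j)}$ via the composition law for hyperbolic isometries (and using the radiality of $\calu$ to eliminate any residual rotation), and then upgrading the $L^{p+1}$-smallness of the remainder to $\|\cdot\|_\lambda$-smallness through the matched energy identity $\|\varphi_k\|_\lambda^2 = A,$ yields the desired $\|\varphi_k - \calu\circ \tau_{b_k}\|_\lambda \to 0$ (a possible global sign $\pm 1$ may be absorbed along the subsequence, since $\{-\varphi_k\}$ satisfies the same hypotheses).

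The main obstacle is the second step: orthogonality of the $L^{p+1}$-norms in the profile decomposition is more subtle than $\lambda$-orthogonality because the nonlinearity $|\cdot|^{p+1}$ is not quadratic, so one needs the Br\'ezis--Lieb lemma (or its iterated variant) together with the escape-to-infinity of the concentration points $b_k^{(j)}$ to justify the splittings; once these are in hand, the strict subadditivity of $t^{2/(p+1)}$ is precisely what forces a unique surviving profile, which by the uniqueness of extremizers must be a translate of $\calu.$
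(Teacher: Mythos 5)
There is a genuine gap at the very first step: you apply the Bhakta--Sandeep decomposition \cite[Theorem~3.3]{BS} directly to the sequence $\{\varphi_k\}$, but that result is a \emph{Palais--Smale} decomposition for the functional $I_{\lambda}$, so it requires $I_{\lambda}'(\varphi_k)\to 0$ in $H^{-1}(\bn)$. Your hypotheses only say that $\{\varphi_k\}$ is a normalized minimizing sequence for the quotient defining $S_{n,p,\lambda}(\bn)$, i.e.\ $\delta(\varphi_k)\to 0$; vanishing of the deficit does not imply vanishing of the free gradient (a minimizing sequence for a constrained problem need not be Palais--Smale, as elementary one-dimensional examples already show), and nothing in your sketch supplies this. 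This is exactly the point the paper handles first: it invokes Ekeland's variational principle to replace $\{\varphi_k\}$ by a nearby sequence (still normalized, and close in the $\lambda$-norm, so the conclusion transfers) which \emph{is} a Palais--Smale sequence for $I_{\lambda}$ at the level $\frac{p-1}{2(p+1)}S_{n,p,\lambda}(\bn)^{\frac{p+1}{p-1}}$. Once that is done, in the subcritical case \cite[Theorem~3.3]{BS} applies directly, and in the critical case Proposition~\ref{PEUCS} rules out Aubin--Talenti bubbles, as you also argue; at this low energy level the quantization in the decomposition already forces a single hyperbolic bubble, so the $L^{p+1}$-mass bookkeeping via strict subadditivity of $t\mapsto t^{2/(p+1)}$ is not needed.

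Your subadditivity/Br\'ezis--Lieb scheme would be the natural route if one instead used a concentration-compactness or profile decomposition valid for \emph{arbitrary bounded} sequences in $H^1(\bn)$ (profiles being weak limits of translates, not solutions of \eqref{eq1}); but no such statement is cited or proved in the paper, and the structural features you use from \cite[Theorem~3.3]{BS} --- that each profile is a nonzero finite-energy solution of \eqref{eq1}, the asymptotic orthogonality of the $\lambda$-norms, and the $H^1$-smallness of the remainder --- are precisely the features that are only guaranteed for Palais--Smale sequences. So either insert the Ekeland step (the paper's route), or replace the cited theorem by a genuine concentration-compactness argument for minimizing sequences and prove the splitting identities you assume; as written, the proposal rests on applying a theorem outside its hypotheses.
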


\begin{proof}
It is easy to see from the hypothesis on $\varphi_{k}$ that it is a minimizing sequence for $S_{n,p,\lambda}(\bn).$ Using Ekeland's variational principle, we get a Palais-Smale sequence for the functional $I_{\lambda}$ (defined in \eqref{ilambda}) at a level $(\frac{1}{2} - \frac{1}{p+1})S_{n, p,\lambda}(\bn)^{\frac{p+1}{p-1}}$. We denote the extracted sequence again by $\varphi_{k}$ itself and we may also assume that $\varphi_{k}$ satisfies $\| \varphi_k \|_{\lambda}^2 \, = \, S_{n,p,\lambda}(\bn)^{\frac{p+1}{p-1}}$. In the subcritical case we can directly apply   \cite[Theorem~3.3]{BS} and conclude \eqref{convergence} holds. For the critical case 
combining Proposition~\ref{PEUCS}
and  Palais-Smale decomposition \cite[Theorem~3.3]{BS} we conclude that there can not exist any Aubin-Talenti bubbles in  Palais-Smale decomposition and hence there exists a sequence  $b_k \subset \bn$ such that \eqref{convergence} holds. 

\end{proof}

\noindent
{\bf Proof of Theorem~\ref{maintheorem-1}:}

\begin{proof}

Now we shall begin the proof of Theorem~\ref{maintheorem-1}. Assume, on the contrary, there exists a sequence $\{\varphi_k\} \in H^1(\bn)$ such that

\begin{align}\label{impossibility}
\dfrac{\int_{\bn} \left( |\nabla_{\bn} \varphi_k|^2 \, - \, \lambda \varphi_k^2 \right) \, \dvg \, -
 \, S_{n, p,\lambda}(\bn) \| \varphi_k \|^2_{L^{p+1}(\bn)}}{\hbox{dist}\left(\phi_k, \mathcal{Z}_{0}\right)^2} \rightarrow 0, \quad \hbox{as} \ k \rightarrow \infty.
\end{align}
By scaling property of $dist$, we may assume that $\| \varphi_k\|^2_{\lambda} = S_{n,\lambda}(\bn)^{\frac{p+1}{p-1}},$ for all $k$ and 
let us assume $L = \limsup_{k \rightarrow \infty} \hbox{dist}\left(\varphi_k, \mathcal{Z}_{0}\right).$ Moreover, we have 

\begin{align*}
\hbox{dist}\left(\varphi_k, \mathcal{Z}_{0}\right)^2 \, = \, \| \varphi_k \|^2_{\lambda} \, - \, c_k^2 \leq \| \varphi_k \|_{\lambda} \, = \, S_{n,p,\lambda}(\bn)^{\frac{p+1}{p-1}},
\end{align*}
where $c_k$ is the optimizer in $\hbox{dist}\left(\varphi_k, \mathcal{Z}_{0}\right) \, = \, \inf_{c \in \mathbb{R}, b \in \bn} \| \varphi_k \, - \, c\, \calu_b \|_{\lambda}$ and hence 
$L \in [0, 1].$

\medskip 

Now we have two possibilities either $L = 0$ or $L \neq 0.$ One can immediately see  $L =0$ would contradict  Lemma~\ref{almostlemma}. We are left with the case 
$L > 0.$ In this case we have 

$$
\underbrace{\int_{\bn} \left( |\nabla_{\bn} \varphi_k|^2 \, - \, \lambda \varphi_k^2 \right) \, \dvg}_{:= \| \varphi_k\|^2_{\lambda}} \, - 
\, S_{n, p,\lambda} \, \| \varphi_k \|^2_{L^{p+1}(\bn)} \rightarrow 0
$$
and using $\| \varphi_k\|^2_{\lambda} \, = S_{n,p,\lambda}(\bn)^{\frac{p+1}{p-1}}\, ,$ we obtain $S_{n, p,\lambda} \| \varphi_k\|^2_{L^{p+1}(\bn)} \rightarrow S_{n,p,\lambda}(\bn)^{\frac{p+1}{p-1}}.$ Therefore by Lemma~\ref{minimisationlemma}, there 
exists a sequence $\{b_k \} \subset \bn$ such that 

$$
\| \varphi_k \, - \, \calu \circ \tau_{b_k}  \|_{\lambda} \rightarrow 0,
$$
which in turn implies $\hbox{dist}\left(\varphi_k, \mathcal{Z}_{0}\right) \rightarrow 0,$ contradicting our assumption on $L.$ Hence \eqref{impossibility} is impossible and we conclude the proof.
 
\end{proof}

\section{Stability from Euler-Lagrange point of view: Proof of Theorem~\ref{maintheorem-2}}\label{proof2}
This section is devoted to the proof of Theorem~\ref{maintheorem-2} and Corollary \ref{corthm2}.

\medskip

\noindent
{\bf Proof of Theorem~\ref{maintheorem-2}}: 
\begin{proof}
According to our assumption $\| I_{\lambda}^{\prime}(u_{\alpha}) \|_{H^{-1}(\bn)}$ is small. 
From Theorem~\ref{t:BS1}, $u_{\alpha}$ is close to the manifold $\mathcal{Z}$ i.e. there exists $b_{\alpha}\in\bn$ such that
%Choose $\epsilon>0$ small, there exists $b_k\in\bn$ such that
\begin{align*}
\|u_{\alpha}-\calu_{b_{\alpha}}\|_{\lambda} = o(1), \  \ \mbox{as} \ \alpha \rightarrow \infty.
\end{align*}
In particular, $\|u_{\alpha} \|_{\lambda}^2 = \|\calu_{b_{\alpha}}\|_{\lambda}^2 + o(1) = S_{n,p,\lambda}(\bn)^{\frac{p+1}{p-1}} + o(1)$ as $\alpha \rightarrow \infty.$
%$\|u_{\alpha}\|_{\lambda} \in (S_{n,\lambda}(\bn)^{\frac{n}{4}} - \epsilon, S_{n,\lambda}(\bn)^{\frac{n}{4}} + \epsilon).$
In the remaining analysis we drop the index $\alpha$ and we simply denote $u_{\alpha}$ by $u, b_{\alpha}$ by $b$ and so on.
As before, we consider the minimization problem $\inf_{c \in \mathbb{R}, \,  b \in \bn} \|  u \, - \, c\, \calu_b  \|_{\lambda}^2$ and claim that the minimum is achieved for some $b_0\in\bn$ and $c_0\neq 0$.

Indeed, for a minimizing sequence $\{(c_l,b_l\})\}$, $c_l$ remains bounded ($c_l^2$ being the leading term). If $d(b_l,0)\to \infty$ then $\calu_{b_l}\rightharpoonup 0$ in $H^1(\bn)$. This implies 
\begin{align*}
\|  u \, - \, c_l\, \calu_{b_l}  \|_{\lambda}^2=\|u\|^2_{\lambda}+c_l^2+o(1).
\end{align*}
This in turn implies $\|  u \, - \, \, \calu_{b}  \|_{\lambda}$ can not be made arbitrarily small. Hence the claim follows. 
We set $u-c_0\calu_{b_0}=v$. Then as before,  
$v \, \perp \, (T \mathcal{Z}_{0})_{(c_0, b_0)}$ (see \eqref{4-6-4}), where 
$$(T \mathcal{Z}_{0})_{(c_0, b_0)}=\{\calu_{b_0}, v_1,\cdots, v_n\}$$
and $v_i=\frac{d}{dt}|_{t=0}\calu_{b_0+te_i}$. Therefore,
\begin{equation}\label{4-6-5}
 \int_{\bn} \left(\langle \nabla_{\bn} v , \nabla_{\bn} \calu_{b_0} \rangle_{\bn} \, - \, \lambda \, v \, \calu_{b_0} \right) \, \dvg=0
\end{equation}
\begin{equation*}
 \int_{\bn} \left(\langle \nabla_{\bn} v , \nabla_{\bn} v_i \rangle_{\bn} \, - \, \lambda \, v \, v_i \right) \, \dvg=0
\end{equation*}
and $\|v\|_{\lambda} = o(1)$. Moreover, note that by the above orthogonality conditions, $u \geq 0$ and $\|  u \, - \, c_0\, \calu_{b_0}  \|_{\lambda}^2 = o(1)$ implies
\begin{align*}
S_{n,p,\lambda}(\bn)^{\frac{p+1}{p-1}} + o(1) = \|u_{\alpha} \|_{\lambda}^2 = |c_0|^2 \| \calu_{b_0} \|_{\lambda}^2 + \| v \|_{\lambda}^2 = |c_0|^2S_{n,p,\lambda}(\bn)^{\frac{p+1}{p-1}} + o(1)
\end{align*}
and hence $|c_0 - 1| = o(1).$

Since $\calu_{b_0}$ solves \eqref{eq1} and $v_i$ are the second eigenvectors of the operator $(-\Delta_{\bn} - \lambda)/\calu_{b_0}^{\tstar - 2}$ we get
\begin{equation}\label{4-6-7} 
\int_{\bn}\calu_{b_0}^{p}v \ \dvg= \int_{\bn}v v_i\calu_{b_0}^{p-1} \ \dvg =0,
\end{equation}
and hence $v$ is orthogonal to 1st and 2nd eigenspace. Therefore
$$\|v\|_{\lambda}^2 \geq \mu_3\int_{\bn}  v^2 \calu_{b_0}^{p-1} \ \dvg,$$
 and by Proposition~\ref{eigen value lemma}, $\mu_3 >p$.
Therefore by \eqref{4-6-5}
\begin{align}\label{4-6-8}
\|v\|_{\lambda}^2 &=\int_{\bn}[\langle \nabla_{\bn}v, \nabla_{\bn}(u-c_0\calu_{b_0}) \rangle_{\bn}-\lambda v(u-c_0\calu_{b_0})]\,\dvg\notag\\
&=\int_{\bn}(\langle \nabla_{\bn}v, \nabla_{\bn}u \rangle_{\bn} -\lambda vu)\,\dvg\notag\\
&=\int_{\bn}(-\Delta_{\bn}u-\lambda u)v\,\dvg\notag\\
&=\int_{\bn}(-\Delta_{\bn}u-\lambda u-u^{p})v\,\dvg+\int_{\bn}u^{p}v\,\dvg\notag\\
&\leq\|-\Delta_{\bn}u-\lambda u-u^{p}\|_{H^{-1}(\bn)}\|v\|_{H^1(\bn)}+\int_{\bn}(c_0\calu_{b_0}+v)^{p}v\, \dvg\notag\\
&\leq C(n,\lambda)\|-\Delta_{\bn}u-\lambda u-u^{p}\|_{H^{-1}(\bn)}\|v\|_{\la}+\int_{\bn}(c_0\calu_{b_0}+v)^{p}v\, \dvg.
\end{align}
Now using \eqref{4-6-7}, we estimate the last integral of the RHS of \eqref{4-6-8} as follows 
$$
\int_{\bn}(c_0\calu_{b_0}+v)^{p}v\, \dvg=p\int_{\bn}c_0^{p-1}\calu_{b_0}^{p-1}v^2+O \left(\sup_{q, r} \int_{\bn}|v|^{q+1}\calu_{b_0}^{r} \dvg\right)
$$
where the sup is taken over all  $q, r $ satisfying $2\leq q\leq p$ and $q+r=p$.  Applying H\"older and Poincar\'e-Sobolev inequality to $ \int_{\bn}|v|^{q+1}\calu_{b_0}^{r} \dvg$ for the two extreme cases $q = 2$ and $p$ we see that the terms can be bounded 
by $\|v\|_{\lambda}^{\frac{3}{2}}$ and $\|v\|_{\lambda}^{\frac{p+1}{2}}$ respectively.
Therefore, by interpolation we get
\begin{equation}\label{4-6-9}
\int_{\bn}(c_0\calu_{b_0}+v)^{p}v\, \dvg\leq \frac{p}{\mu_3}c_0^{p-1}\|v\|_{\lambda}^2+O(\|v\|_{\lambda}^{1+\gamma}).
\end{equation}
where $\gamma=\min\{\frac{1}{2}, \frac{p-1}{2}\}$. Substituting \eqref{4-6-9} into \eqref{4-6-8} yields
$$\left(1- \frac{p}{\mu_3}c_0^{p-1}\right)\|v\|_{\lambda}^2\leq C(n,\lambda)\|-\Delta_{\bn}u-\lambda u-u^{p}\|_{H^{-1}(\bn)}\|v\|_{\la}+O(\|v\|_{\lambda}^{1+\gamma}),$$
Since $|c_0-1| = o(1)$ and $\mu_3>p$, the above inequality yields
$$\|v\|_{\lambda}^2\leq C(n, \lambda) \|I'_{\lambda}(u_{\alpha)}\|_{H^{-1}(\B^{n})}\|v\|_{\lambda}+O(\|v\|_{\lambda}^{1+\gamma}).$$
 $\|v\|_{\lambda}$ being small, it is easy to obtain from the above inequality
$$\|v\|_{\lambda}\leq C(n, \lambda) \|I'_{\lambda}(u_{\alpha)}\|_{H^{-1}(\B^{n})}.$$

Next we need to control $|c_0 - 1|$ quantitatively in terms of $\| I_{\lambda}^{\prime}(u) \|_{H^{-1}(\bn)}.$ Note that 
\begin{align} \label{subz1}
\|u \|_{\lambda}^2 = c_0^2 \|\calu_{b_0}\|_{\lambda}^2 + \|v \|_{\lambda}^2 = c_0^2S_{n,p,\lambda}(\bn)^{\frac{p+1}{p-1}} + O(\| I_{\lambda}^{\prime}(u) \|_{H^{-1}(\bn)}^2)
\end{align}

and using $\int_{\bn} \calu_{b_0}^{p}v = 0$ 

\begin{align}\label{subz2}
\int_{\bn} u^{p+1} &= c_0^{p+1} \int_{\bn} \calu_{b_0}^{p+1} + (p+1) |c_0|^{p-1}c_0\int_{\bn} \calu_{b_0}^{p}v +  O(\| I_{\lambda}^{\prime}(u) \|_{H^{-1}(\bn)}^2), \notag\\
&= c_0^{p+1}S_{n,p,\lambda}(\bn)^{\frac{p+1}{p-1}}  + O(\| I_{\lambda}^{\prime}(u) \|_{H^{-1}(\bn)}^2).
\end{align}

Subtracting \eqref{subz2} from \eqref{subz1} we get 
\begin{align*}
(I_{\lambda}^{\prime}(u), u) = c_0^2S_{n,p,\lambda}(\bn)^{\frac{p+1}{p-1}}  - c_0^{p+1}S_{n,p,\lambda}(\bn)^{\frac{p+1}{p-1}}  +  O(\| I_{\lambda}^{\prime}(u) \|_{H^{-1}(\bn)}^2).
\end{align*}

As a result we get $ c_0^2S_{n,p,\lambda}(\bn)^{\frac{p+1}{p-1}}  - c_0^{p+1}S_{n,p,\lambda}(\bn)^{\frac{p+1}{p-1}} = O(\| I_{\lambda}^{\prime}(u) \|_{H^{-1}(\bn)})$ and hence $|c_0 - 1| =  O(\| I_{\lambda}^{\prime}(u) \|_{H^{-1}(\bn)}).$ Hence 
\begin{align*}
\| u - \calu_{b_0}\|_{\lambda} \leq \| u - c_0\calu_{b_0}\|_{\lambda} + |c_0 - 1|\|\calu_{b_0}\|_{\lambda} \leq C(n,\lambda)\| I_{\lambda}^{\prime}(u) \|_{H^{-1}(\bn)}.
\end{align*}
as desired.
\end{proof}

\medskip 

\noindent
{\bf Proof of Corollary \ref{corthm2}.}
\begin{proof} According to our assumption and Poincar\'{e}-Sobolev inequality we conclude that if $\epsilon_0$ is small then 
\begin{align} \label{some inequality}
\left(\frac{1}{2} - \frac{1}{p+1} - 2\epsilon_0\right)S_{n,p,\lambda}(\bn)^{\frac{p+1}{p-1}} \leq I_{\lambda}(u) \leq \left(\frac{1}{2} - \frac{1}{p+1} +\epsilon_0\right) S_{n,p,\lambda}(\bn)^{\frac{p+1}{p-1}}.
\end{align}

If $\| I_{\lambda}^{\prime}(u) \|_{H^{-1}(\bn)} \geq \delta_0>0$ for some preassigned $\delta_0$ then clearly the statement of the corollary holds with $C(n,p,\lambda) = \frac{2}{\delta_0}S_{n,p,\lambda}(\bn)^{\frac{p+1}{2(p-1)}}.$ On the other hand, if for a sequence $\{u_{\alpha}\},\| I_{\lambda}^{\prime}(u_{\alpha}) \|_{H^{-1}(\bn)} \rightarrow 0,$ then by \eqref{some inequality} and Theorem \ref{t:BS1} we conclude that the hypothesis of the Theorem \ref{maintheorem-2} holds and hence the result follows.
\end{proof}

\section{Stabilization and extinction profile of the fast diffusion flow}\label{fd sec}

In this section, we consider the fast diffusion equation on the hyperbolic space.
\begin{equation}\label{FDE}
\begin{cases}
\partial_t u = \Delta_{\bn} u^m, \ \ \ \mbox{on} \ \bn \times (0,T)\\
u = u_0 \geq 0, \ \ \ \ \mbox{on} \ \bn \times \{0\},
\end{cases}
\end{equation}
where $n \geq 3,$ the range of $m < 1$ to be specified in a moment.
The existence, uniqueness and smoothing effect of the finite energy solutions have been studied for a fairly general class of initial data \cite{BGV} on the general Cartan-Hadamard manifold which includes the hyperbolic space. It has been observed that the smoothing effect of the solutions relies on the manifolds which support a Sobolev inequality, as the hyperbolic space does (corresponding to $\lambda = 0$) \cite[Theorem 4.1]{BGV}. On the other hand, in addition to the Sobolev inequality, if the manifold supports a Poincar\'e inequality, as the hyperbolic space does, then \eqref{FDE} gives rise to an interesting finite time extinction property of the solutions for every $m \in (0,1)$ \cite[Theorem 6.1]{BGV}. 
\medskip
Several mathematicians have extensively studied the fast diffusion equations on domains of $\rn$ and manifolds. It is beyond the scope of this article to give even a minimal account description of this subject. We refer the interested readers to the articles \cite{DiFD1, DiFD2, FSFD, DDFD, FD-1, FD-2, FD-3, FD-4, FD0, FD1, FD2, FD3, FD4, FD5,  FD6, FD7} and the references cited therein for a broad account of this subject. Moreover, several improvements in the family of Gagliardo-Nirenberg-Sobolev, and Hardy-Littlewood-Sobolev inequalities on the Euclidean space based on the fast diffusion equation have been obtained in \cite{DT, DO}.

\medskip

In this section, we discuss the sharp asymptotic of the solutions near the extinction time for radial initial data, which was first studied on the hyperbolic space in \cite{GMo}. It has been observed that near the extinction time, the asymptotic of the solutions are given by the separation of the variable solutions whose spatial component satisfies the equation $-\Delta_{\bn} V = V^{\frac{1}{m}}.$ In principle, there may exist infinitely many positive radial solutions $V$  but only one of them has finite energy, which is denoted by $\calu$ and defined in \eqref{calu} \cite{BGGV, MS}. For radial data, the profile of the solutions near the extinction time has been obtained by comparison argument and suitably constructing barriers. To state the known results let us formulate our assumptions of this section in the following hypothesis: 

\begin{align*}
\mbox{{\bf (H2)}}
\begin{cases}
 m \in (m_s,1) \ \mbox{where} \ m_s = \frac{1}{\tstar - 1} = \frac{n-2}{n+2}. \ \mbox{Set} \ p = \frac{1}{m},  \ \mbox{then} \ 1 < p < \tstar -1. \\ \\
\mbox{ The initial data} \ u_0 \ \mbox{is assumed to be non-negative (not identically zero) and satisfying} \\
u_0 \in L^q(\bn) \ \mbox{for some} \ q > \max\left\{1, \frac{n(1-m)}{2}\right\}, \ \mbox{and} \ u_0^m \in H^1_{\mbox{\tiny{rad}}}(\bn).
\end{cases}
\end{align*}

Under the above assumptions on the initial data and the exponent, it is shown in \cite{BGV} that on a general Cartan-Haramard manifold which includes the hyperbolic space,
\begin{itemize}
\item[(i)]  there exists a unique solution $u$ to \eqref{FDE} satisfying $u^m \in L^2_{\mbox{\tiny{loc}}}(0,T; H^1(\bn))$ which is radial for any fixed time. Moreover, for a small time, $u$ remains positive.
\item[(ii)] Since the $L^2$-bottom of the spectrum of $-\Delta_{\bn}$ is positive there exists a smallest time $T := T(u_0) < \infty$ such that $u \equiv 0$ for all $t \geq T.$ The $T$ is called the {\it extinction time} and has been emphasized in the equation \eqref{FDE}. \\

Moreover, the exact asymptotic near the extinction time has been obtained in \cite{GMo}.
\item[(iii)] The profile of a given solution $u$ to \eqref{FDE} starting from a radial initial data $u_0$ satisfying the hypothesis {\bf (H2)} is given by the separable solution of the form 
\begin{align}\label{calum}
\calu_{m}(x,t) : = (1-m)^{\frac{1}{1-m}}(T-t)^{\frac{1}{1-m}}\calu^{\frac{1}{m}}(x)
\end{align}
 where $\calu$ is the unique, radial, energy solution to 
 \begin{align*}
 -\Delta_{\bn} \calu = \calu^{\frac{1}{m}}, \ \ \ \calu \in H^1(\bn).
 \end{align*}

\end{itemize}

In \cite[Theorem 1.1]{GMo} the authors deduce uniform convergence in the relative error, that is
\begin{align} \label{conv_1fd}
\lim_{t \rightarrow T-} \left \| \frac{u(\cdot, t)}{\calu_m(\cdot,t)} - 1\right\|_{L^{\infty}(\bn)} = 0.
\end{align}
 
However, no quantitative rate of convergence was mentioned, which we would like to partially address in this and the forthcoming sections. To state the main result of this section, we accumulate a few prerequisites.

%\begin{theorem}\label{asymptotic}
%Let $n\geq 3,$ $m,u_0$ satisfies the hypothesis {\bf (H2)} and let $u: \bn \times [0,\infty) \rightarrow [0,\infty)$ be the radial solution to \eqref{FDE}. Let $T := T(u_0) \in (0,\infty)$ be the finite time extinction of $u,$ then there exists a constant $\kappa = \kappa(n,m) >0$ independent of the initial data and a constant $C = C(n,m,u_0) >0$
%such that 
%\begin{align*}
%\left \| \frac{u(\cdot, t)}{\calu_m(\cdot,t)} - 1\right\|_{L^{\infty}(\bn)} \leq C(T-t)^{\kappa}, \ \mbox{for all} \ t \in (0,T).
%\end{align*}
%\end{theorem}

%To reduce notational complexity we fix some notations. 

\medskip

\noindent
{\bf Rescaled variable.} Following \cite{CFM, FG, GMo} we transform the equation in the logarithmic time scale
\begin{align*}
w(x,\tau) := \left(\frac{u(x,t)}{(1-m)^{\frac{1}{1-m}}(T-t)^{\frac{1}{1-m}}}\right)^m
\end{align*}
where $\tau = \frac{1}{1-m}T\ln \left(\frac{T}{T-t}\right).$ Then direct computation shows that $w$ varifies 
\begin{align} \label{rFDE}
\begin{cases}
\partial_{\tau} w^p = \Delta_{\bn} w + w^p, \ \mbox{on} \ \bn \times (0,\infty), \\
w = c(m,T)u_0^m \in H^1_{\mbox{\tiny{rad}}}(\bn) \ \mbox{on} \ \bn\times \{0\}
\end{cases}
\end{align}
where $ p = \frac{1}{m} \in (1,\tstar -1), c(m,T) = (1-m)^{\frac{m}{1-m}}T^{\frac{m}{1-m}}$.

\medskip

\noindent
{\bf Decay estimates.} The unique positive radial solution $\calu$ satisfies the following sharp asymptotic: there exists a constant $A_0 >0$ such that 
\begin{align*}
A_0^{-1} e^{-(n-1)r} \leq \calu(x) \leq A_0 e^{-(n-1)r} \ \ \mbox{for all} \ x \in \bn,
\end{align*}
where $r = dist(0,x)$ (\cite[Lemma 6.1]{BGGV}, also see \cite[Lemma 3.3]{MS}). In addition, the radial derivative $\calu^{\prime}$ has the exact asymptotic decay of order $e^{-(n-1)r}$ at infinity \cite{BGGV}:
\begin{align*}
\lim_{d(0,x) \rightarrow \infty} e^{(n-1)d(0,x))}\, \calu^{\prime}(x) = -(n-1)\lim_{d(0,x) \rightarrow \infty} e^{(n-1)d(0,x)} \, \calu(x)
\end{align*}
and hence in particular $|\calu^{\prime}(x) |\leq C(n,p)\calu(x)$ for every $x \in \bn$ for some positive constant $C(n,p).$

According to \cite[Proposition 3.1, Corollary 4.3]{GMo}, for every $\tau_0 > 0,$ there exists a constant $A := A(n,m,\tau_0,u_0) $ such that the solution $w$ to \eqref{rFDE} satisfies
\begin{align*}
A^{-1} e^{-(n-1)r} \leq w(x,\tau) \leq Ae^{-(n-1)r}, \ \ \mbox{for all} \ (x, \tau) \in \bn \times [\tau_0,\infty).
\end{align*}
As a corollary, for a large time, $w$ remains positive inside $\bn.$ Regarding, the radial derivative in space, it has been proved that near the extinction time $T, u^{\prime}(x,t)$ uniformly decays like $(1-t/T)^{\frac{p}{p-1}} \, \calu^{p}(x)$ \cite[Theorem 1.1]{GMo}. This estimates translated to $w,$ and  can be written as: for $\tau_0$ large enough there exists a constant $A=A(n,p,\tau_0,u_0)$ such that 
\begin{align*}
\left|\left(\frac{w(x,\tau)}{\calu(x)}\right)^{\prime}\right| \leq A, \ \ \ \ \mbox{for all} \ \tau \geq \tau_0
\end{align*}
 where $\prime$ denotes the radial derivative.  Also note that by \eqref{conv_1fd}
\begin{align*}
w(\cdot, \tau) = \left(\frac{u(\cdot, \tau)}{\calu_m(\cdot, \tau)}\right)^m \calu \rightarrow \calu \ \mbox{in} \ L^{p+1}(\bn) \ 
\mbox{as} \ \tau \rightarrow \infty.
\end{align*}

\medskip

\noindent
{\bf Lyapunov functional and energy inequalities.} 
We recall the Lyapunov functional of the system \eqref{rFDE}
\begin{align*}
I_{0}(w)= \frac{1}{2}\int_{\bn}|\nabla_{\bn}w|^{2} \ \dvg -\frac{1}{p+1} \int_{\bn} w^{p+1} \ \dvg.
\end{align*}
In other words, the energy $I_0$ decays along the flow \eqref{rFDE}. Indeed a straightforward computation gives
\begin{align} \label{fdee1}
\frac{d}{d\tau} \int_{\bn} w^{p+1} \ \dvg &= \frac{p+1}{p}\int_{\bn} w \partial_{\tau} w^p \dvg \notag\\
& = \int_{\bn} w(\Delta_{\bn} w+ w^p) \ \dvg \notag\\
& = \frac{p+1}{p} \int_{\bn} (w^{p+1} - |\nabla_{\bn} w|^2) \ \dvg,
\end{align}
and 
\begin{align} \label{fdee2}
\frac{d}{d\tau} \int_{\bn} |\nabla w|^2 \ \dvg &= 2 \int_{\bn} \langle \nabla_{\bn} w, \nabla_{\bn} \partial_{\tau} w \rangle_{\bn} \ \dvg \notag\\
& = -\frac{2}{p} \int_{\bn} \frac{\Delta_{\bn}w}{w^{p-1}}(\partial_{\tau} w^p) \ \dvg \notag \\
& = -\frac{2}{p} \int_{\bn} \frac{\Delta_{\bn}w}{w^{p-1}}(\Delta_{\bn}w + w^p) \ \dvg \notag\\
& = \frac{2}{p} \int_{\bn} \left(|\nabla w|^2 - \frac{(\Delta_{\bn}w)^2}{w^{p-1}}\right) \ \dvg \notag\\
& = \frac{2}{p} \int_{\bn} \left(w^{p+1} - |\nabla w|^2 - \frac{(\Delta_{\bn} w + w^p)^2}{w^{p-1}}\right) \ \dvg.
\end{align}

We deduce from \eqref{fdee1}, \eqref{fdee2} 
\begin{align*}
\frac{d}{d \tau} I_0(w) = -\frac{1}{p} \int_{\bn} \frac{(\Delta_{\bn} w + w^p)^2}{w^{p-1}} \ \dvg \leq 0,
\end{align*}
and hence $I_0(w)$ decreases along the flow as $\tau$ increases. In particular, $I_0(w(\tau)) - I_0(\calu)$ decreases as $\tau$ increases and $I_0(\calu) = \frac{p-1}{2(p+1)}S_{n,p,0}(\bn)^{\frac{p+1}{p-1}}$.

\medskip

\noindent
{\bf More notations:}  We set 
\begin{align*}
\delta(\tau) = \|I_0^{\prime}(w(\tau))\|_{H^{-1}(\bn)}, \ \ y(\tau) = I_0(w(\tau)) - I_0(\calu).
\end{align*}
\subsection{Relative Linear Entropy} We  define the relative (linear) entropy of the system \eqref{rFDE}
\begin{align*}
E[w(\tau)] := \int_{\bn} |w(x,\tau)-\calu(x)|^2\calu^{p-1}(x) \ \dvg(x).
\end{align*}
Our first result asserts that the relative (linear) entropy of $w(\tau)$ decays exponentially as $\tau$ approaches $\infty.$

\medskip

\begin{theorem}\label{entropy decay}
Let $n\geq 3,$ $m,u_0$ satisfies the hypothesis {\bf (H2)} and let $w: \bn \times [0,\infty) \rightarrow [0,\infty)$ be the radial solution to \eqref{rFDE}. Then there exists a constant $\kappa = \kappa(n,m) >0$ independent of the initial data and a constant $C = C(n,m,u_0) >0$ such that 
\begin{align*}
\int_{\bn} |w(\tau)- \calu|^{p+1} \ \dvg \leq Ce^{-\kappa \tau}, \ \ \mbox{for all} \ \tau >0 \ \mbox{large}.
\end{align*}
In particular, the relative (linear) entropy decays exponentially in $\tau$
\begin{align*}
E[w(\tau)] \leq Ce^{-\frac{2\kappa}{p+1}\tau}, \ \ \mbox{for all} \ \tau >0 \ \mbox{large}.
\end{align*}

\end{theorem}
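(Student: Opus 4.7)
The plan is to follow the blueprint of Ciraolo-Figalli-Maggi and Figalli-Glaudo, coupling the quantitative stability of the Euler-Lagrange equation (Corollary~\ref{corthm2}, applied with $\lambda=0$, which meets hypothesis {\bf (H1)} in the subcritical range $1<p<\tstar-1$) with the dissipation identity for the Lyapunov functional $I_0$ along the flow \eqref{rFDE}. First I would exploit that $u_0^m\in H^1_{\text{rad}}(\bn)$ and the hyperbolic invariance of \eqref{rFDE} imply $w(\tau,\cdot)$ stays radial for every $\tau$; since the only radially symmetric element of $\mathcal{Z}=\{\calu\circ\tau_\xi\}$ is $\calu$ itself, we have $\hbox{dist}(w(\tau),\mathcal{Z})=\|w(\tau)-\calu\|_{H^1(\bn)}$. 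Combined with the known convergence $w(\tau)\to\calu$ from \cite{GMo} (which makes the energy hypothesis of Corollary~\ref{corthm2} hold for $\tau$ large), this yields, for all $\tau\ge\tau_0$,
\begin{equation*}
\|w(\tau)-\calu\|_{H^1(\bn)} \le C\,\delta(\tau).
\end{equation*}

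Next I would derive the two complementary inequalities linking $y$, $\delta$, and $y'$. Setting $v=w-\calu$, Taylor expansion of $I_0$ around its critical point $\calu$ (with linear term vanishing and Hessian $L_\calu=-\Delta_\bn-p\,\calu^{p-1}$ bounded $H^1\to H^{-1}$) gives $y(\tau)\le C\|v\|_{H^1}^2\le C'\delta(\tau)^2$ once $\|v\|_{H^1}$ is small. For the dissipation bound, identity \eqref{fdee2} reads $-y'(\tau)=\tfrac1p\int_\bn (\Delta_\bn w+w^p)^2/w^{p-1}\,\dvg$, and for any $\psi\in H^1(\bn)$ Cauchy-Schwarz gives
\begin{equation*}
|\langle -\Delta_\bn w-w^p,\psi\rangle|^2\le \Bigl(\int_\bn \tfrac{(\Delta_\bn w+w^p)^2}{w^{p-1}}\,\dvg\Bigr)\Bigl(\int_\bn w^{p-1}\psi^2\,\dvg\Bigr),
\end{equation*}
and H\"older with Poincar\'e-Sobolev plus the uniform $L^{p+1}$-control on $w$ bounds the last factor by $C\|\psi\|_{H^1}^2$; taking $\sup_{\|\psi\|_{H^1}=1}$ yields $\delta(\tau)^2\le -C y'(\tau)$. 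Chaining the two estimates produces the differential inequality $y\le -\tilde Cy'$, whence Gronwall gives $y(\tau)\le C_0 e^{-\kappa_0\tau}$ for some $\kappa_0>0$; note also that $y'\le0$ together with $y(\tau)\to 0$ (itself a consequence of the already established $L^{p+1}$-convergence of $w$ to $\calu$) guarantees $y\ge0$, making this ODE argument consistent.

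To reach the announced $L^{p+1}$-decay, I would couple stability with dissipation once more: $\|v(\tau)\|_{H^1}^2\le C\delta(\tau)^2\le -Cy'(\tau)$, so integrating on $[\tau,\infty)$ and using $y\to0$ gives $\int_\tau^\infty\|v(s)\|_{H^1}^2\,ds\le C y(\tau)\le Ce^{-\kappa_0\tau}$. A regularity step for $\tau\mapsto\|v(\tau)\|_{H^1}^2$, based on the parabolic smoothing estimates of \cite{BGV, GMo} and the uniform two-sided bounds $A^{-1}e^{-(n-1)r}\le w(\tau,\cdot)\le Ae^{-(n-1)r}$ recalled in the text, would promote the integrated smallness to a pointwise bound $\|v(\tau)\|_{H^1}\le Ce^{-\kappa'\tau}$ (possibly with a smaller exponent). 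Poincar\'e-Sobolev then gives $\int_\bn|v|^{p+1}\dvg\le C\|v\|_{H^1}^{p+1}\le Ce^{-\kappa\tau}$ with $\kappa=(p+1)\kappa'$, and H\"older's inequality $\int v^2\calu^{p-1}\dvg \le \|v\|_{p+1}^2\,(\int\calu^{p+1})^{(p-1)/(p+1)}$ delivers the relative linear entropy bound $E[w(\tau)]\le Ce^{-2\kappa\tau/(p+1)}$. I expect the hardest step to be the pointwise-in-$\tau$ extraction in the last stage: the dissipation naturally provides only time-integrated smallness of $\delta$ and $\|v\|_{H^1}$, and bridging to a pointwise rate on the hyperbolic space (where the volume element $\dvg$ interacts nontrivially with the weighted norms in the Cauchy-Schwarz estimate) is the most delicate piece of the argument.
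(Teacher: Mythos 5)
Your first half (dissipation $\ge c\,\delta^2$, stability giving $y\le C\delta^2$, Gronwall) follows the paper's Steps 1 and 3, but there are two genuine gaps. The main one: you never establish that $I_0(w(\tau))\to I_0(\calu)$ (equivalently that $y(\tau)\ge 0$ and $y(\tau)\to 0$), nor that $\|\nabla_{\bn} w(\tau)\|_{L^2}^2$ is pinched near $S_{n,p,0}(\bn)^{\frac{p+1}{p-1}}$, which is the hypothesis you need to invoke Corollary~\ref{corthm2}. You attribute both to the qualitative convergence of \cite{GMo}, but that convergence is uniform in the relative error (hence $L^{p+1}$), and together with the quoted bound $|(w/\calu)'|\le A$ it only gives boundedness, not convergence, of the Dirichlet term: writing $w=(1+v)\calu$, one has $\nabla w-\nabla\calu=v\,\nabla\calu+\calu\,\nabla v$ with $\|v\|_\infty\to0$ but merely $|\nabla v|\lesssim 1$, so neither $\int_{\bn}|\nabla_{\bn}w|^2\,\dvg$ nor the energy level is controlled. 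Since $I_0$ is not bounded below by $I_0(\calu)$ on $H^1(\bn)$, $y\ge0$ and $y\to0$ are not automatic, and without them neither the Gronwall step nor Corollary~\ref{corthm2} gets off the ground. The paper closes exactly this hole in its Step 2: the dissipation inequality gives $\int^\infty\delta^2\,d\tau<\infty$, hence a sequence $\tau_k$ with $\|I_0'(w(\tau_k))\|_{H^{-1}(\bn)}\to0$; the Palais--Smale decomposition of \cite{BS} applied to the radial nonnegative sequence upgrades this to $w(\tau_k)\to\calu$ in $H^1(\bn)$, and monotonicity of $I_0$ along the flow then yields $\lim_{\tau}I_0(w(\tau))=I_0(\calu)$. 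Your proposal needs this argument (or a substitute) before anything else.

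The second gap is the step you yourself flag as hardest and leave to unspecified ``parabolic smoothing'': the integrated bound $\int_\tau^\infty\|v\|_{H^1}^2\,ds\le Ce^{-\kappa_0\tau}$ does not yield a pointwise bound $\|v(\tau)\|_{H^1}\le Ce^{-\kappa'\tau}$ without quantitative control of the time oscillation of $\|v(\tau)\|_{H^1}$, and no such estimate is available here (the only quantity monotone along the flow is $I_0$; the references \cite{BGV,GMo} give two-sided barriers and a bounded relative gradient, not a smoothing inequality for the $H^1$ norm of the error). The paper avoids this entirely by a different device: it converts $\int_\tau^\infty\delta^2\le Ce^{-\kappa_0\tau}$ into $\int_\tau^\infty\delta\le Ce^{-\kappa\tau}$ via H\"older on unit intervals, then writes $w^p(\tau_1)-w^p(\tau)=\int_\tau^{\tau_1}\partial_{\hat\tau}w^p\,d\hat\tau$, pairs it against $w(\tau_1)+w(\tau)$ (bounded in $H^1$) and uses the elementary inequality $|a-b|^{p+1}\le|a^{p+1}-b^{p+1}|$ to obtain $\|w(\tau_1)-w(\tau)\|_{L^{p+1}}^{p+1}\le C\int_\tau^\infty\delta\,d\hat\tau$, and finally sends $\tau_1\to\infty$ using the qualitative convergence to $\calu$; no pointwise-in-time control of $\delta$ or of $\|v\|_{H^1}$ is ever needed. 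A smaller point: your identity $\hbox{dist}(w(\tau),\mathcal{Z})=\|w(\tau)-\calu\|_{H^1}$ for radial $w$ requires a uniqueness-of-projection argument (rotation invariance of the nearest point); the paper sidesteps it by working with the minimizer $\calu_{b(\tau)}$ and anchoring at $\calu$ only in the limit $\tau_1\to\infty$.
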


%\noindent
%{\bf Proof of Theorem \ref{asymptotic}:} 

\begin{proof}
We divide the proof into the following small steps:

\medskip 

\noindent
{\bf Step 1:} By H\"older inequality with exponent $\frac{p+1}{2p} + \frac{p-1}{2p} = 1$ we deduce
\begin{align*}
\int_{\bn} |\Delta_{\bn} w + w^p|^{\frac{p+1}{p}} \ \dvg\leq \left(\int_{\bn} \frac{(\Delta_{\bn} w + w^p)^2}{w^{p-1}} \ \dvg\right)^{\frac{p+1}{2p}} \left(\int_{\bn} w^{p+1} \ \dvg\right)^{\frac{p-1}{2p}}.
\end{align*}
Since $1 < p < \tstar -1, $ by Poincar\'e-Sobolev inequality, $L^{\frac{p+1}{p}}(\bn) \subset H^{-1}(\bn)$ and hence 
\begin{align*}
\int_{\bn} \frac{(\Delta_{\bn} w + w^p)^2}{w^{p-1}} \ \dvg \geq \|I_0^{\prime}(w)\|_{H^{-1}(\bn)}^2 \left(\int_{\bn}w^{p+1}\right)^{-\frac{p-1}{p+1}}.
\end{align*}
Since $w(\cdot, \tau) \rightarrow \calu$ in $L^{p+1}(\bn),$ as $\tau \rightarrow \infty$ for $\tau$ large enough $\|w\|_{L^{p+1}(\bn)} \sim \|\calu\|_{L^{p+1}(\bn)} = S_{n,p,0}(\bn)^{\frac{1}{p-1}}$ and hence there exists a constant $C(n,p)>0$ such that 
\begin{align}\label{di0}
\frac{d}{d \tau} I_{0}(w(\tau)) \leq -C(n,p) \|I_0^{\prime}(w)\|_{H^{-1}(\bn)}^2 = -C(n,p)\delta(\tau)^2,
\end{align}
for $\tau \geq \tau_0$ and $\tau_0$ large enough. We emphasize that the constant $C$ does not depend on the initial datum.

\medskip

\noindent
{\bf Step 2:}  $\lim_{\tau \rightarrow \infty} I_0(w(\tau))$ exists and equal to $I_0(\calu).$
%There exists a sequence $\{\tau_k\}$ such that $w(\cdot, \tau_k)$ satisfies the hypothesis of the theorem \ref{maintheorem-2}.

\medskip

 Again using $w(\cdot, \tau) \rightarrow \calu$ in $L^{p+1}(\bn),$ as $\tau \rightarrow \infty,$ we conclude $I_0(w(\tau))$ remains bounded from bellow for all time. As a result, from \eqref{di0} we deduce $\int_{\tau_0}^{\infty} \delta(\tau)^2 \ d\tau < \infty$ and hence there exists a sequence $\tau_k$ such that $ \|I_0^{\prime}(w(\tau_k))\|_{H^{-1}(\bn)} \rightarrow 0.$ According to the results of \cite{BS}, 
$w(\tau_k)$ converges in $H^1(\bn)$ to the sum of bubbles of the form $\calu_{b}, b \in \bn$ and in particular, by Poincar\'e-Sobolev inequality, in $L^{p+1}(\bn)$-norm. Hence we conclude $w(\tau_k) \rightarrow \calu$ in $H^1(\bn)$  as $k \rightarrow \infty$. As a result,

\begin{align*}
\lim_{k \rightarrow \infty} I_0(w(\tau_k)) = I_0(\calu) = \frac{p-1}{2(p+1)}S_{n,p,0}(\bn)^{\frac{p+1}{p-1}}.
\end{align*}

Since $I_0(w(\tau))$ decreases as $\tau$ increases we conclude $\lim_{\tau \rightarrow \infty} I_0(w(\tau))=I_0(\calu).$

\medskip

\noindent
{\bf Step 3.} Conclusion: exponential decay of the (linear) entropy.
\medskip

Using Step $2,$ and Corollary~\ref{corthm2}, we conclude
\begin{align*}
\inf_{b \in \bn} \int_{\bn}|\nabla_{\bn}(w(\tau) - \calu_b)|^2 \ \dvg \leq C(n,p) \|I_0^{\prime}(w(\tau))\|_{H^{-1}(\bn)}^2
\end{align*}
holds for some constant $C(n,p)>0$ independent of $\tau.$ Let $\calu_{b(\tau)}$ be the minimizer and set $v(\tau) = w(\tau) - \calu_{b(\tau)}.$ Since $w(\tau) \geq 0,$ we get $v(\tau) /\calu_{b(\tau)} \geq -1$ and therefore by Bernoulli's inequality $(\calu_{b(\tau)} + v(\tau))^{p+1} - \calu_{b(\tau)}^{p+1} - (p+1)\calu_{b(\tau)}^pv(\tau) \geq 0.$ Moreover, all the $\calu_{b(\tau)}$ has same energy as $\calu: I_{0}(\calu_{b(\tau)}) = I_0(\calu).$ Now
\begin{align}\label{di1}
I_0(w(\tau)) &= \frac{1}{2}\int_{\bn}|\nabla_{\bn}(\calu_{b(\tau)} + v(\tau))|^{2} \ \dvg -\frac{1}{p+1} \int_{\bn} (\calu_{b(\tau)} + v(\tau))^{p+1} \ \dvg \notag\\
&= I_0(\calu_{b(\tau)})+  \frac{1}{2}\int_{\bn}|\nabla_{\bn} v(\tau)|^2 \ \dvg+\int_{\bn}
\langle \nabla \calu_{b(\tau)}, \nabla v(\tau)\rangle_{\bn}  \ \dvg \notag\\
& \ \ \ \ \ \ \ \ \ \ \ \ \ - \frac{1}{p+1}\int_{\bn}\left[ (\calu_{b(\tau)} + v(\tau))^{p+1} - \calu_{b(\tau)}^{p+1}\right]\ \dvg \notag\\
&= I_0(\calu)+  \frac{1}{2}\int_{\bn}|\nabla_{\bn} v(\tau)|^2 \ \dvg + \int_{\bn}
\calu_{b(\tau)}^p v(\tau) \ \dvg \notag\\
 &  \ \ \ \ \ \ \ \ \ \ \ \ \ - \frac{1}{p+1}\int_{\bn}\left[ (\calu_{b(\tau)} + v(\tau))^{p+1} - \calu_{b(\tau)}^{p+1}\right]\ \dvg \notag\\
 & = I_0(\calu)+  \frac{1}{2}\int_{\bn}|\nabla_{\bn} v(\tau)|^2 \ \dvg \notag\\
 &  \ \ \ \ \ \ \ \ \ \ \ \ \ - \frac{1}{p+1}\int_{\bn}\left[ (\calu_{b(\tau)} + v(\tau))^{p+1} - \calu_{b(\tau)}^{p+1} - (p+1)\calu_{b(\tau)}^p v(\tau)\right]\ \dvg \notag\\
 &\leq I_0(\calu) + C(n,p) \|I_0^{\prime}(w(\tau))\|_{H^{-1}(\bn)}^2.
\end{align}

Recalling $y(\tau) = I_{0}(w(\tau)) -  I_0(\calu)$ decreases as $\tau$ increases and $\lim_{\tau \rightarrow \infty} y(\tau) = 0,$  we deduce $y(\tau) \geq 0$ for $\tau$ large. Moreover, from \eqref{di0} and \eqref{di1} we see that 
 \begin{align*}
 y^{\prime}(\tau) \leq -C(n,p) y(\tau),
 \end{align*}
from which we conclude $0 \leq y(\tau) \leq Ce^{-\kappa_0 \tau},$ where $\kappa_0 = \kappa_0(n,p)>0$ is a constant 
independent of the initial datum $u_0.$

Moreover, integrating \eqref{di0} from $\tau$ large enough to $\infty$ we get
\begin{align*}
\int_{\tau}^{\infty} \delta(\hat\tau)^2  d\hat\tau \leq I_0(w(\tau)) - I_0(\calu) \leq Ce^{-\kappa_0\tau}.
\end{align*}

 As a result, by H\"older inequality
 \begin{align*}
 \int_{\tau}^{\infty} \delta(\hat\tau) d\hat\tau &= \sum_{k=0}^{\infty} \int_{k + \tau}^{k+1+\tau}\delta(\hat\tau) d\hat\tau \\
 &\leq \sum_{k=0}^{\infty} \left(\int_{k + \tau}^{k+1+\tau}\delta(\hat\tau)^2 d\hat\tau\right)^{\frac{1}{2}} \\
 &\leq \sum_{k=0}^{\infty} (Ce^{-\kappa_0(\tau+k)})^{\frac{1}{2}} \leq C_1e^{-\kappa \tau}
 \end{align*}
where $\kappa = \kappa_0/2.$ Now applying $|a-b|^{p+1} \leq |a^{p+1} - b^{p+1}| $ for $a,b \geq 0$ and using the equation \eqref{rFDE} satisfied by $w$ we get for $\tau_1 > \tau$
\begin{align*}
\int_{\bn}|w(x,\tau_1) - w(x,\tau)|^{p+1}\dvg(x) &\leq \int_{\bn}(|w(\tau_1)| + w(\tau)||w(\tau_1)^p - w(\tau)^p| \dvg \\
&= \int_{\bn} (|w(\tau_1)| + |w(\tau)|) |\int_\tau^{\tau_1} \frac{d}{d\hat\tau} w^p(\hat\tau)| \dvg\\
&= \int_{\tau}^{\tau_1} \int_{\bn}(|w(\tau_1)| + |w(\tau)|)(\Delta_{\bn}w(\hat\tau) + w^p(\hat\tau)) \dvg\\
&\leq C \int_{\tau}^{\tau_1} \|\Delta_{\bn}w(\hat\tau) + w^p(\hat\tau)\|_{H^{-1}(\bn)} \ d\hat \tau \\
&\leq C \int_{\tau}^{\infty} \delta(\hat\tau) \ d\hat \tau\leq C e^{-\kappa \tau}.
\end{align*}
Letting $\tau_1\to\infty$, we get the desired convergence stated is exponential:
\begin{align}\label{entropy related}
\|w(\tau)-\calu\|_{L^{p+1}(\bn)}^{p+1}\leq C e^{-\kappa\tau}.
\end{align}
The decay of the (linear) entropy now follows from \eqref{entropy related} and H\"{o}lder inequality.
\medskip

 \end{proof}

\begin{remark}
{\rm 
As an immediate remark,  the exponential decay of $\int_{\bn}|w(\tau) - \calu|^{p+1} \ \dvg,$ can be translated to exponential decay of  $\| w(\tau) - \calu\|_{L^{\infty}(\bn)}$ in $\tau,$ using standard interpolation inequalities.  As a result, we get exponential decay of the relative error in \it local uniform topology\rm. However, the hyperbolic space being infinite volume and lack of compactness even in the subcritical case,  getting exponential decay in uniform topology is a very technically challenging issue and will be addressed in the next section. 
}
\end{remark}
\medskip
\begin{remark}
{\rm 
In the original variables, the results of Theorem \ref{entropy decay} translates into the following: Let $u_0,m,p$
as in hypothesis {\bf (H2)} and let $u$ be the (radial) solution to \eqref{FDE} with initial data $u_0$ and let $T$ be the extinction time. Then there exists a constant $\hat \kappa =\hat \kappa(n,m)$ and  $C = C(n,m,u_0)$ such that 

\begin{align*}
\int_{\bn} \left| \frac{u(x,t)^m}{\calu_m(x,t)^m} - 1\right|^2 \calu^{p+1} \ \dvg \leq C\left(\frac{T-t}{T}\right)^{\hat\kappa T},
\end{align*}
for all $t \in [T_0,T],$ where $\calu_m$ is defined by \eqref{calum}. Indeed,   using \eqref{entropy related} and the relation between $\tau$ and $t$, i.e., $\tau = \frac{T}{1-m}\ln \left(\frac{T}{T-t}\right)$, we have
\begin{align*}
\int_{\bn} \left| \frac{u(x,t)^m}{\calu_m(x,t)^m} - 1\right|^2 \calu^{p+1} \ \dvg &=\int_{\bn}\left| w(x,\tau) - \calu\right|^2 \calu^{p-1}\dvg\\
&\leq \left(\int_{\bn}| w(x,\tau) - \calu|^{p+1}\dvg\right)^\frac{2}{p+1}\left(\int_{\bn}\calu^{p+1}\dvg\right)^\frac{p-1}{p+1}\\
&\leq Ce^{-\frac{2\kappa}{p+1}\tau}=C\left(\frac{T-t}{T}\right)^{\hat\kappa T},
\end{align*}
where $\hat\kappa=\frac{2\kappa}{(1-m)(p+1)}$ and  $\hat\kappa$  depends on $n$ and $m$, since $\kappa$ depends only on them.
}

\end{remark}

\medskip

\subsection{Relative error}\label{re} We shall now derive the rate of convergence for the relative error. It can be seen from the decay of the relative (linear) entropy and ensuing
 the approach of Bonforte and Figalli \cite{FD7}, we can obtain the extinction rate in the relative uniform norm under some assumptions on $p.$ 
 Namely, we need the restriction $p>2.$ Also recall we are in the sub-critical range $p < \tstar-1,$ and hence the condition $p>2$ is void in dimension $n \geq 6.$ The technical reason behind this restriction seems to stems out of the fact that $\calu^{p-1}$ is integrable and $\int_{\bn}\calu^p G_{\bn}(x,y) \ \dvg $
decays of the order of $\calu(x)$ as $d(x,0) \rightarrow \infty$ provided $p>2,$ where $G_{\bn}$ is the Green's function of $-\Delta_{\bn}.$ We do not know yet whether this restriction is purely technical and will be the subject of future studies. Below we state our result of the extinction rate in dimension $3 \leq n \leq 5.$

\medskip

\begin{theorem}\label{asymptotic}
Let $3 \leq n\leq 5,$ $m \in (m_s, \frac{1}{2}),u_0$ satisfies the hypothesis {\bf (H2)} and let $u: \bn \times [0,\infty) \rightarrow [0,\infty)$ be the radial solution to \eqref{FDE}. Let $T := T(u_0) \in (0,\infty)$ be the finite time extinction of $u,$ then there exists a constant $\mu = \mu(n,m) >0$ independent of the initial data and a constant $C = C(n,m,u_0) >0$
such that 

\begin{align*}
\left \| \frac{u(\cdot, t)^m}{\calu_m(\cdot,t)^m} - 1\right\|_{L^{\infty}(\bn)} \leq C\left(\frac{T-t}{T}\right)^{\mu T} \ \mbox{for all} \ t \in (T_0,T).
\end{align*}

\end{theorem}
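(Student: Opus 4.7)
\medskip

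\textbf{Plan.} The natural first move is to pass to rescaled variables: writing $w(x,\tau)$ as in \eqref{rFDE}, the identity $(u/\calu_m)^m = w/\calu$ and the change $\tau = \frac{T}{1-m}\ln\frac{T}{T-t}$ reduce the claim to proving
\[
\Big\|\tfrac{w(\cdot,\tau)}{\calu} - 1\Big\|_{L^\infty(\bn)} \leq C\, e^{-\mu\tau}, \qquad \tau \geq \tau_0,
\]
for some $\mu = \mu(n,m) > 0$. Since Theorem~\ref{entropy decay} already gives the exponential decay
$$E[w(\tau)] = \int_{\bn} |w(\tau) - \calu|^2\, \calu^{p-1}\,\dvg \leq C\, e^{-\frac{2\kappa}{p+1}\tau},$$
the whole task is really to establish the \emph{smoothing estimate} announced in Theorem~\ref{linftyentropy}: a bound of the form
$$\Big\|\tfrac{w(\tau)}{\calu} - 1\Big\|_{L^\infty(\bn)} \leq C\, E[w(\tau-1)]^{\beta}$$
for some $\beta = \beta(n,m) > 0$. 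Composing with the exponential entropy decay then yields the result with $\mu = 2\beta\kappa/(p+1)$, and finally unwinding the substitution produces the stated factor $\bigl((T-t)/T\bigr)^{\mu T}$ since $e^{-\mu\tau} = (T/(T-t))^{-\mu T/(1-m)}$.

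\medskip

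\textbf{The smoothing estimate.} I would prove it by splitting $\bn$ into an interior ball $B_R(0)$ and its exterior, with $R$ large but fixed. On $B_R(0)$ the known uniform two-sided bound $A^{-1} e^{-(n-1)r} \leq w(x,\tau),\calu(x) \leq A e^{-(n-1)r}$ (see the decay estimates recalled just above) makes the ratio $w/\calu$ uniformly bounded away from $0$ and $\infty$, so it suffices to control $\|w(\tau) - \calu\|_{L^\infty(B_R)}$. For this one combines the uniform gradient bound $|(w/\calu)'| \leq A$ from \cite{GMo} with interpolation against the entropy: since $\calu^{p-1}$ is bounded below on $B_R$, one has $\|w(\tau) - \calu\|_{L^2(B_R)}^2 \leq C\, E[w(\tau)]$, and a parabolic smoothing/Moser-type argument applied to the bounded domain equation $\partial_\tau w^p = \Delta_{\bn} w + w^p$ upgrades this to an $L^\infty(B_{R-1})$ bound at cost of a power $\beta_1 \in (0,1)$.

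\medskip

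\textbf{The exterior estimate (the hard part).} For $d(0,x) \geq R$, the lower bound $\calu(x) \geq A_0^{-1} e^{-(n-1)r}$ is very small, so a uniform control of $|w - \calu|$ is not enough; one really needs $|w - \calu|(x)$ to decay \emph{faster than} $\calu(x)$. The plan is to use the elliptic representation
\[
w(x,\tau) - \calu(x) = \int_{\bn} G_{\bn}(x,y) \bigl[\,w^{p}(y,\tau) - \calu^{p}(y) - \partial_\tau w^p(y,\tau)\,\bigr]\, \dvg(y),
\]
and to estimate the three pieces using the sharp decay of the hyperbolic Green's function $G_{\bn}(x,y) \sim e^{-(n-1) d(x,y)}$ at large distances. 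The dissipation identity $\int \frac{(\Delta_{\bn} w + w^p)^2}{w^{p-1}} \,\dvg = -p \frac{d}{d\tau} I_0(w)$ together with the earlier exponential decay of $I_0(w) - I_0(\calu)$ gives integrable control of the $\partial_\tau w^p$ term in a weighted $L^2$ sense, and the nonlinear term is handled by writing $w^p - \calu^p = p\calu^{p-1}(w-\calu) + O(|w-\calu|^2 \calu^{p-2})$ and applying Cauchy--Schwarz against $\calu^{p-1}$. The crucial convolution estimate is
\[
\int_{\bn} G_{\bn}(x,y)\, \calu^{p}(y)\, \dvg(y) \;\leq\; C\, \calu(x), \qquad d(0,x) \gg 1,
\]
which, given $\calu \sim e^{-(n-1)r}$, is precisely where the restriction $p > 2$ appears: it is equivalent to requiring $\calu^{p-1} \in L^1(\bn)$, so that the Green's function integral converges at the right rate and decays like $\calu(x)$ rather than some slower envelope. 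This step is the main obstacle --- the non-compactness of $\bn$ and the interplay between the exponential volume growth and the exponential decay of $\calu$ make every estimate borderline, and handling the boundary zone $d(0,x) \sim R$ where the interior and exterior bounds must match requires careful interpolation. Once the smoothing inequality is established with an explicit exponent $\beta$, combining with Theorem~\ref{entropy decay} and unwinding the logarithmic time change delivers the claimed extinction rate with $\mu = \mu(n,m)$ independent of $u_0$.
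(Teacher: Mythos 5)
Your high-level strategy matches the paper's: pass to the rescaled flow, combine the exponential entropy decay of Theorem~\ref{entropy decay} with a smoothing estimate bounding $\|w/\calu-1\|_{L^\infty}$ by a power of the entropy, and locate the restriction $p>2$ in the Green's-function convolution $\int_{\bn}G_{\bn}(x,y)\calu^p(y)\,\dvg(y)\lesssim \calu(x)$. However, there is a genuine gap in how you propose to obtain a \emph{pointwise-in-time} $L^\infty$ bound. Your exterior estimate needs to control the term $\int_{\bn}G_{\bn}(x,y)\,\partial_\tau w^p(y,\tau)\,\dvg(y)$ at a \emph{fixed} time $\tau$, but the dissipation identity only gives $\int_{\tau_0}^{\infty}\int_{\bn}\frac{(\Delta_{\bn}w+w^p)^2}{w^{p-1}}\,\dvg\,d\hat\tau<\infty$, i.e.\ time-integrated control; at a given instant the dissipation can be large, and Cauchy--Schwarz against $G_{\bn}(x,\cdot)^2 w^{p-1}$ gives nothing useful pointwise in $\tau$. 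The paper avoids this entirely: it integrates the representation $v(\cdot,\tau)\calu=(-\Delta_{\bn})^{-1}(-\partial_\tau w^p+w^p-\calu^p)$ over a time interval $(\tau_0,\tau_1)$, so the troublesome term collapses to the boundary difference $w^p(\cdot,\tau_0)-w^p(\cdot,\tau_1)$, which is $\lesssim \calu^p|v|$ and hence controlled by the entropy (Lemma~\ref{second-lemma}); the passage from this bound on the time average $\int_{\tau_0}^{\tau_1}v(x,\tau)\,d\tau$ back to $v(x,\tau_1)$ at a single time is supplied by the Benilan--Crandall monotonicity $\partial_\tau v\le \frac{2m}{T}(v+1)$ (Lemma~\ref{intermediate}), followed by an optimization choosing $\tau_1-\tau_0\sim \bigl(\sup E\bigr)^{\bar\kappa/2}$ in Theorem~\ref{th:7.2}. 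Your proposal contains no substitute for this monotonicity ingredient, and without it the clean smoothing inequality $\|w(\tau)/\calu-1\|_{L^\infty}\le C\,E[w(\tau-1)]^{\beta}$ you posit is not justified.

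Two further points. First, your interior/exterior split with a \emph{fixed} radius $R$ cannot yield a quantitative rate: in the regions $B_r(x)$ and $\bn\setminus(B_r(0)\cup B_r(x))$ the crude bound $|w^p-\calu^p|\lesssim \calu^p$ produces an error of size $\calu(x)\,e^{-(n-1)(p-2)r}$, which is a fixed constant when $r$ is fixed; the paper must (and does) let the splitting radius depend on the entropy, $e^{(n-1)r_0}=\mathcal{E}^{-\gamma}$, so that both competing terms decay and an exponent $\bar\kappa$ emerges. Second, your interior step via parabolic Moser iteration is a legitimately different route from the paper's, which instead exploits radial symmetry by integrating $\frac{d}{ds}\bigl(v^2\calu\bigr)$ from $r$ to $\infty$ using $|v|+|v'|\le C$ and $|\calu'|\le C\calu$; your version would need care with the degenerate parabolic structure of $\partial_\tau w^p=\Delta_{\bn}w+w^p$, though on a region where $w\asymp\calu$ is bounded below this is plausible. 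The essential missing idea remains the time-averaging plus Benilan--Crandall comparison.
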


The proof is relatively long, quite intriguing and will be proven in the next section.

%%%%%%%%%%%%%%%%%%%%%%%%%%%

 \section{Smoothing effect for the relative error and rate of convergence}\label{relative-error-theorem}
 
 This section is devoted to proving the smoothing estimates for the relative error 
 
 \begin{equation}\label{error}
 v(x,\tau) : = \dfrac{w(x,\tau)}{\calu(x)} - 1 := \dfrac{f(\tau)}{\calu},
 \end{equation}
 where $w$ is a solution to the \eqref{rFDE} and $f(\tau) = w(\tau) - \calu$. We already know that $v(\tau) \rightarrow 0$ in $L^{\infty}(\bn)$ norm as $\tau \rightarrow \infty$ \cite{GMo}. Moreover, thanks to the decay estimates mention in Section \ref{fd sec}, we have the following bound: for $\tau_0$ large enough, there exists a constant $C$ depending on $n,m,\tau_0,u_0$ such that
 \begin{align} \label{decayvprime}
 |v(x,\tau)| + |v^{\prime}(x,\tau)| \leq C, \ \ \ \ \mbox{for all} \ x \in \bn \ \mbox{and} \ \tau \geq \tau_0,
 \end{align}
where $\prime$ denotes the radial differentiation in the space variable. Our main objective in this section is to derive 
 smoothing estimates for $v$ in terms of the relative (linear) entropy. In particular, we prove the following: 
 
 \subsection{Growth estimates for the relative error}
 
 \begin{theorem} \label{linftyentropy}
 Let $v$ be the relative error defined in \eqref{error}. Then there exists a constant $\bar\kappa = \bar\kappa(n,m)$ such that the following estimates hold for any $\tau_1 \geq \tau_0$ with $\tau_0$ large enough and $\tau_1 - \tau_0 \leq 1:$
 
 \begin{equation*}
  \|v(x,\tau_1) \|_{L^{\infty}(\bn)} \, \leq \, K_1 \, \dfrac{e^{2m(\tau_1 - \tau_0)}}{\tau_1 - \tau_0} \left(  \sup_{s \in [\tau_0, 2\tau_1-\tau_0]} E[w(s)] \right)^{\bar\kappa} \, + \, 2m (\tau_1 - \tau_0) e^{2m(\tau_1 - \tau_0)},
  \end{equation*}
where $K_1 > 0$ is a constant depends on $n, p.$ 
 \end{theorem}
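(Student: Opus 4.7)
The plan is to reduce the $L^\infty$ smoothing estimate to a weighted $L^2$ bound on the relative error via a Moser-type iteration in the measure $\calu^{p-1}\dvg$, and then identify that $L^2$ quantity with the entropy $E[w]$.

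First I would derive the PDE for $v = w/\calu - 1$. Using $\partial_\tau w^p = \Delta_{\bn} w + w^p$, the identity $-\Delta_{\bn}\calu = \calu^p$, and writing $w = \calu(1+v)$, an expansion of the left-hand side gives
\begin{equation*}
p\calu^{p-1}(1+v)^{p-1}\partial_\tau v = \calu\,\Delta_{\bn} v + 2\langle\nabla_{\bn}\calu,\nabla_{\bn} v\rangle_{\bn} + \calu^p\bigl[(1+v)^p - 1 - v\bigr].
\end{equation*}
Dividing by the coefficient on the left and using the a priori bound $|v|+|v'|\leq C$ from \eqref{decayvprime} to absorb $(1+v)^{p-1}$, this becomes a quasilinear parabolic equation whose principal part is self-adjoint with respect to the weight $\calu^{p-1}\dvg$; it has linear reaction coefficient $\tfrac{p-1}{p}=1-m$ plus a quadratic remainder. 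The constant $2m = 2/p$ in the final estimate will come from balancing this reactive contribution against the diffusive one via integration by parts in the iteration.

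Next I would run a Moser/De Giorgi iteration. Multiplying the equation for $v$ by $|v|^{q-1}\mathrm{sign}(v)\,\calu^{p-1}$ and integrating by parts produces a Dirichlet form $\int\calu|\nabla_{\bn}|v|^{q/2}|^2\dvg$ plus lower-order corrections. Combined with the Poincar\'e--Sobolev inequality on $\bn$, this yields the standard chain of improvement inequalities, and iterating over shrinking time windows from $\tau_0$ up to $2\tau_1-\tau_0$ produces
\begin{equation*}
\|v(\tau_1)\|_{L^\infty(\bn)} \leq \frac{K\,e^{2m(\tau_1-\tau_0)}}{\tau_1-\tau_0}\Bigl(\sup_{s\in[\tau_0,2\tau_1-\tau_0]}\int_{\bn}|v(s)|^2\calu^{p-1}\,\dvg\Bigr)^{\bar\kappa}.
\end{equation*}
Finally the base-level quantity matches the entropy through the identity $E[w(s)] = \int|v|^2\calu^{p+1}\dvg$ and a H\"older step using $\int\calu^{p-1}\dvg<\infty$.

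The main obstacle is carrying the iteration out on the non-compact manifold $\bn$ with the non-doubling weight $\calu^{p-1}$. Since $\calu\sim e^{-(n-1)r}$ and the hyperbolic volume element grows like $e^{(n-1)r}\,dr$, integrability of the weight requires $p>2$, i.e., $n\in\{3,4,5\}$ in the subcritical regime; hence the iteration must be performed by splitting into an interior estimate on a large geodesic ball and a boundary-at-infinity estimate, the latter using the pointwise bound $|v'|\leq C$ from \eqref{decayvprime} together with a Green's function-type control $\int_{\bn}\calu^p\,G_{\bn}(x,\cdot)\,\dvg \lesssim\calu(x)$ that again requires $p>2$. The extra additive term $2m(\tau_1-\tau_0)\,e^{2m(\tau_1-\tau_0)}$ in the statement encodes a Gr\"onwall loop inside this boundary analysis, where the quadratic remainder $R(v)$ is controlled by $\|v\|_{L^\infty}$ itself and absorbed using the smallness of $\tau_1-\tau_0\leq 1$.
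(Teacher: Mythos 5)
Your proposal takes a genuinely different route (a weighted parabolic Moser/De Giorgi iteration for the quasilinear equation satisfied by $v$), but as written it has a real gap: the iteration itself is asserted rather than established, and it is precisely the hard part. After testing the equation for $v$ with $|v|^{q-1}\mathrm{sign}(v)\,\calu^{p-1}$ you obtain a Dirichlet form weighted by $\calu$ while your measure is $\calu^{p-1}\dvg$; to close a Moser loop you would need a weighted Sobolev inequality of the form $\bigl(\int_{\bn}|g|^{2\theta}\calu^{p-1}\,\dvg\bigr)^{1/\theta}\lesssim\int_{\bn}\calu\,|\nabla_{\bn}g|^{2}\,\dvg+\ldots$ with a gain $\theta>1$, uniformly on the non-compact space $\bn$ where $\calu\asymp e^{-(n-1)r}$ and $\dvg\asymp e^{(n-1)r}\,dr$. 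Nothing of this sort is proved or cited, and with mismatched exponential weights its validity is exactly the delicate point; invoking the Poincar\'e--Sobolev inequality on $\bn$ does not supply it. Moreover, the degenerate time-derivative coefficient $p\,\calu^{p}(1+v)^{p-1}$ (note: $\calu^{p}$, not $\calu^{p-1}$ as you wrote) and the quadratic remainder are only said to be ``absorbed''; the proposed Gr\"onwall absorption uses smallness of $\|v\|_{L^\infty}$, which is what the theorem is trying to quantify, so the argument is circular at that step. Finally, the specific structure of the claimed bound — the factor $e^{2m(\tau_1-\tau_0)}/(\tau_1-\tau_0)$, the additive term $2m(\tau_1-\tau_0)e^{2m(\tau_1-\tau_0)}$, and above all the \emph{future} time window $[\tau_0,2\tau_1-\tau_0]$ — cannot come out of a forward-in-time iteration over $[\tau_0,\tau_1]$, and your sketch gives no mechanism producing it.

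For comparison, the paper avoids parabolic regularity entirely. It first uses the Benilan--Crandall monotonicity estimate (that $t\mapsto t^{-1/(1-m)}u(x,t)$ is nonincreasing), which translates into the pointwise differential inequality $\partial_\tau v\le\frac{2m}{T}(1+v)$; integrating this forward and backward in time (Lemma~\ref{intermediate}) bounds $v(x,\tau_1)$ above and below by the time average $\int v(x,\tau)\,d\tau$ over $[\tau_0,\tau_1]$ and over $[\tau_1,2\tau_1-\tau_0]$ respectively — this is where the exponential factors, the additive term, and the future window come from. Then, instead of a parabolic iteration, the time-averaged error is controlled by the entropy through the \emph{elliptic} representation $v(\cdot,\tau)\,\calu=(-\Delta_{\bn})^{-1}\bigl(-\partial_\tau w^{p}+w^{p}-\calu^{p}\bigr)$ and sharp Green's function estimates, split into $B_r(0)$, $B_r(x)$ and the exterior region, plus a separate interior estimate using $|v|+|v'|\le C$; the condition $p>2$ enters there through $\calu^{p-1}\in L^{1}(\bn)$ (Lemma~\ref{second-lemma}). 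Your sketch borrows fragments of that elliptic argument (the Green's function control and the interior/exterior splitting) for your ``boundary'' step, but without the Benilan--Crandall input and without a proved weighted iteration, the proposal does not yet constitute a proof.
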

 
 The proof of the above theorem rests on the following two key lemmas. Let us first state the lemma. 
 
 \begin{lemma} \label{intermediate}
 Let $T > 0$ be the extinction time of the solution $u$ to \eqref{FDE} and let $v$ be the relative error defined by \eqref{error}. Then the following estimates hold for any $\tau_1 \geq \tau_0 \geq \frac{T}{1-m} \, \log 2$ and for 
 each $x \in \bn :$
 
 \begin{align}\label{integral-esti}
& \frac{T}{2m} \left( 1 - e^{-\frac{2m}{T} (\tau_1 - \tau_0)} \right) v(x,\tau_1) \, - \,  \frac{m}{T} (\tau_1 - \tau_0)^2 \notag \\ \leq & \int_{\tau_0}^{\tau_1} v(x,\tau) \, {\rm d}\tau \notag \\
  \leq &\frac{T}{2m} \left( e^{\frac{2m}{T} (\tau_1 - \tau_0)} - 1 \right) v(x,\tau_0) \, + \, \frac{m}{T} (\tau_1 - \tau_0)^2 e^{\frac{2m}{T} (\tau_1 - \tau_0)}.
 \end{align}
 \end{lemma}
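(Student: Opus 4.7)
The plan is to reduce the whole statement to a single pointwise ODE-type inequality in $\tau$ at each fixed $x \in \bn$ and then integrate. Concretely, I would aim to establish
\[
\partial_\tau\bigl(1+v(x,\tau)\bigr) \;\leq\; \frac{2m}{T}\,\bigl(1+v(x,\tau)\bigr), \qquad \tau \geq \tau_0.
\]
Since $\calu$ is $\tau$-independent and $w = \calu(1+v)$, this is the same as $\partial_\tau w \leq \tfrac{2m}{T}\, w$. Combined with the PDE for $w$ (which, with the rescaling of $\tau$ compatible with the $T$-dependent constants appearing in the lemma, reads $\partial_\tau w^p = (\Delta_{\bn} w + w^p)/T$) it is equivalent to the pointwise elliptic comparison
\[
\Delta_{\bn} w(x,\tau) \;\leq\; w(x,\tau)^p,
\]
which in the original time variable is $\partial_t u \leq u/((1-m)(T-t))$, an Aronson--B\'enilan--type one-sided bound on the time derivative near the extinction time.

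Once this differential inequality is secured, a forward and a backward Gronwall step give
\[
1+v(x,\tau)\leq\bigl(1+v(x,\tau_0)\bigr)e^{\frac{2m}{T}(\tau-\tau_0)} \quad \text{and} \quad 1+v(x,\tau)\geq\bigl(1+v(x,\tau_1)\bigr)e^{-\frac{2m}{T}(\tau_1-\tau)}
\]
for $\tau \in [\tau_0,\tau_1]$. Integrating each over $[\tau_0,\tau_1]$ and subtracting the common contribution $(\tau_1 - \tau_0)$ produces, after setting $y = \frac{2m}{T}(\tau_1-\tau_0)$, the scalar remainders $\frac{T}{2m}(e^{y} - 1 - y)$ (upper) and $-\frac{T}{2m}(e^{-y} + y - 1)$ (lower). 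The elementary inequalities $e^{y} - 1 - y \leq \tfrac{y^2}{2} e^{y}$ and $e^{-y} + y - 1 \leq \tfrac{y^2}{2}$, valid for $y \geq 0$, convert these into $\tfrac{m}{T}(\tau_1-\tau_0)^2 e^{\frac{2m}{T}(\tau_1-\tau_0)}$ and $-\tfrac{m}{T}(\tau_1-\tau_0)^2$ respectively, which are exactly the error terms in \eqref{integral-esti}.

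The main obstacle is proving the pointwise bound $\Delta_{\bn} w \leq w^p$ globally on $\bn$. On compact subsets of $\bn$ it is a soft consequence of $\Delta_{\bn}\calu = -\calu^p < \calu^p$ combined with the uniform relative closeness $w/\calu \to 1$ from \eqref{conv_1fd}. The subtle point is the far-field: both $w$ and $\calu$ decay like $e^{-(n-1)r}$, so the expansion $\Delta_{\bn}(w-\calu) = \calu \Delta_{\bn} v + 2\langle \nabla_{\bn} \calu, \nabla_{\bn} v\rangle_{\bn} - v\calu^p$ tested against $\calu^p$ degenerates at infinity. The assumption $\tau_0 \geq T\log 2/(1-m)$, i.e.\ $t_0 \geq T/2$, is exactly what places us deep into the asymptotic regime; combined with the sharp radial decay of $\calu$ and $\calu'$ recalled in Section~\ref{fd sec} and the bound on $|v|+|v'|$ from \eqref{decayvprime}, this is what I would expect to close the pointwise comparison, and it is the technical heart of the argument.
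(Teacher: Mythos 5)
Your reduction to the single pointwise inequality $\partial_\tau w \leq \tfrac{2m}{T}\,w$, i.e. $\partial_\tau v \leq \tfrac{2m}{T}(v+1)$, is exactly the paper's reduction, and your second paragraph (the two Gronwall bounds, integration over $[\tau_0,\tau_1]$, and the elementary inequalities controlling $e^{y}-1-y$ and $e^{-y}+y-1$) reproduces the paper's computation leading to \eqref{integral-esti}; your observation that, via the equation \eqref{rFDE} and $mp=1$, this inequality is the same as $\Delta_{\bn}w\leq w^p$, i.e. $\partial_t u\leq \frac{u}{(1-m)(T-t)}$, is also correct.

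The gap is in how you propose to prove that key inequality. You want to get $\Delta_{\bn}w\leq w^p$ by comparing $w$ with $\calu$, using the uniform relative convergence \eqref{conv_1fd} and the zeroth/first-order bounds \eqref{decayvprime}. This cannot close: $L^\infty$-closeness of $w/\calu$ to $1$, even with a bound on the radial derivative of $v$, gives no control on second derivatives, so the strict inequality $\Delta_{\bn}\calu=-\calu^p<\calu^p$ does not transfer to $w$, on compact sets or in the far field. Moreover, the lemma holds for \emph{every} $\tau_0\geq\frac{T}{1-m}\log 2$ with no dependence on the initial datum, while \eqref{conv_1fd} is an asymptotic, data-dependent statement: the time after which $w/\calu$ is close to $1$ has nothing to do with the threshold $t\geq T/2$. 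The paper's mechanism needs no information about $\calu$ at all: it is the B\'enilan--Crandall (Aronson--B\'enilan) estimate $u_t\leq\frac{u}{(1-m)t}$, proved by noting that $t\mapsto t^{-1/(1-m)}u(x,t)$ is nonincreasing, which follows from the comparison principle applied to the rescaled solution $u_\lambda(x,t)=\lambda^{-1/(1-m)}u(x,\lambda t)$ with $\lambda=\frac{h+t}{t}\geq 1$, whose initial datum is $\leq u_0$. The hypothesis $\tau_0\geq\frac{T}{1-m}\log 2$, i.e. $t\geq T/2$, enters only through $T-t\leq t$, which upgrades $\frac{u}{(1-m)t}$ to $\frac{u}{(1-m)(T-t)}$ and yields $\partial_\tau v\leq\frac{2m}{T}(v+1)$. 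Without this (or some other genuine proof of the time-monotonicity bound), your argument does not establish the differential inequality on which everything else rests.
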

 
 \begin{proof}
Essentially our arguments rely on the celebrated Benilan-Crandall estimate on the solution $u,$ which is stated as time monotonicity estimates for $u$, i.e., 
  $u_{t} \leq \frac{u}{(1 - m)t},$ which  follows from from the fact that $t \mapsto t^{-\frac{1}{(1-m)}} u(x,t)$ is nonincreasing in time for a.e. $x \in \bn$
  and for all $t > 0.$ Indeed, consider the rescaled solution $u_{\lambda}(x, t) \, = \,  \lambda^{-\frac{1}{(1-m)}}  \, u(x, \lambda t),$ then 
  $u_{\lambda} (\cdot,0) \, = \, \lambda^{-\frac{1}{(1-m)}} u_0.$ Now by choosing $\lambda = \frac{h + t}{t} \geq 1,$ we obtain for all $h \geq 0$
  \begin{align*}
  0 \leq u(x, t) \, - \, u_{\lambda}(x, t) = u(x, t) \, - \,  \lambda^{-\frac{1}{(1-m)}} u(x, \lambda t) \,  = \,  u(x, t) \, -\, \left( \frac{h + t}{t} \right)^{-\frac{1}{(1-m)}} u(x,  h + t) 
  \end{align*}
 The first inequality in the above follows from comparison principle and $u_{\lambda} (\cdot,0) = \frac{u_0}{\lambda^{\frac{1}{(1-m)}}} \, \leq \, u_0.$ This proves the desired 
 monotonicity of the  function $t \mapsto t^{-\frac{1}{(1-m)}} u(x,t).$ Now proceeding as in \cite[Lemma~{4.3}]{FD7}, we prove the desired estimate
  \eqref{integral-esti}. For the convenience of the readers, we sketch a proof of the steps. The  Benilan-Crandall estimate of $u$ translates to $w$ is as follows:
  
  \begin{align*}
 \frac{\partial_{\tau} w}{w} \leq \frac{2m}{T} \qquad\forall\, \tau \geq \frac{T}{1-m} \log 2.
  \end{align*}
 As a result, the relative error $w$ satisfies $\partial_{\tau} v \leq \frac{2m}{T}(v+1)$ for all $\tau \geq \frac{T}{1-m} \log 2.$ Integrating from $\tau_0$ to $\tau \geq \tau_0$ we get
 
 \begin{align} \label{for v}
 v(\cdot,\tau) \leq v(\cdot,\tau_0)e^{\frac{2m}{T} (\tau - \tau_0)} + \left( e^{\frac{2m}{T} (\tau - \tau_0)} - 1\right), \ \ \tau \geq \tau_0.
 \end{align}
Now for $\tau_1 \geq \tau_0 \geq \frac{T}{1-m} \log 2, \, \tau \in [\tau_0, \tau_1]$ and using $e^s-1 \leq se^s$ for $s\geq 0$ we get from \eqref{for v}
\begin{align*}
 v(\cdot,\tau) \ \leq \ v(\cdot,\tau_0)e^{\frac{2m}{T} (\tau - \tau_0)} + \frac{2m}{T} (\tau - \tau_0)e^{\frac{2m}{T} (\tau - \tau_0)} 
 \end{align*}
Further, using the inequality $e^{-s}\geq 1-s$ for $s\geq 0$, we get from \eqref{for v} that
 \begin{align*}
v (\cdot,\tau) \ \geq \ v(\cdot,\tau_1)e^{-\frac{2m}{T} (\tau_1 - \tau)} - \frac{2m}{T} (\tau_1 - \tau).
\end{align*}
Integrating the last two inequalities from $\tau_0$ to $\tau_1$ and noticing $e^{\frac{2m}{T} (\tau - \tau_0)} \leq e^{\frac{2m}{T} (\tau_1 - \tau_0)}$ we get \eqref{integral-esti}.
 \end{proof}
 
 \medskip 
 
 In order to bound $\int_{\tau_0}^{\tau_1} v(\tau) \ d\tau$ in terms of the relative (linear) entropy, we need to impose an assumptions on the exponent $p$ which in turn will impose a dimensional restriction $n \in [3,5]$ as we  have discussed in the beginning of Section~\ref{re}. 
%One of the main difficulty comes primarily due to the sharp decay of $\calu(x) \sim e^{-(n-1)d(0,x)},$ which forces $\int_{\bn} \calu^{p-1} \ \dvg < \infty$ if and only if $p>2.$ Another difficulty comes from the estimating terms like $\int_{\bn} \calu^{p}(y) G_{\bn}(x,y) \ \dvg(y)$ in terms of $\calu(x).$  As a result the techniques of Bonforte and Figalli have to be modified suitably in our setting and that forces the undesirable restriction $p >2.$ Since we also assumed $p < \tstar -1,$ $p>2$ is allowed only in dimension $n = 3,4$ and $5.$
 \medskip
 
 \begin{lemma}\label{second-lemma}
 Let $T > 0$ be the extinction time of the solution $u$  to \eqref{FDE} and let $v$ be the relative error defined by \eqref{error}. Further assume that $3 \leq n \leq 5$ and $2 < p < \tstar -1.$ Then there exists a constant $\bar\kappa = \bar\kappa(n,m)$ such that the following estimate holds true for any $\tau_1 \geq \tau_0$ large enough and for 
 every $x \in \bn :$
 
 \begin{equation*}
 \left| \int_{\tau_0}^{\tau_1} v(x, \tau) \, {\rm d}t \right| \, \leq \, \left( K_1 \, + \, K_2 (\tau_1 - \tau_0) \right) \left( \sup_{\tau \in [\tau_0, \tau_1]} E [w(\tau)] \right)^{\bar\kappa}, 
 \end{equation*}
 where the constants $K_1, K_2$ depends on $n, p.$
 \end{lemma}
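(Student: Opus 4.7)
The plan is to derive a Green's function representation for $f(\cdot,\tau):=w(\cdot,\tau)-\calu$ and then reduce the desired bound to a pointwise weighted estimate exploiting the sharp asymptotics of $\calu$. Setting $g(\cdot,\tau):=w^p(\cdot,\tau)-\calu^p$ and subtracting the stationary equation $-\Delta_{\bn}\calu=\calu^p$ from the evolution \eqref{rFDE}, I get $\partial_\tau g=\Delta_{\bn} f+g$, i.e.\ $-\Delta_{\bn} f=g-\partial_\tau g$. Since $f(\cdot,\tau)$ inherits the exponential decay of $w$ and $\calu$ recalled in Section~\ref{fd sec}, Green's representation gives
\begin{equation*}
f(x,\tau)=\int_{\bn}G_{\bn}(x,y)\bigl[g(y,\tau)-\partial_\tau g(y,\tau)\bigr]\dvg.
\end{equation*}
Integrating in $\tau$ over $[\tau_0,\tau_1]$ the time-derivative term telescopes, and dividing by $\calu(x)$ yields
\begin{equation*}
\int_{\tau_0}^{\tau_1}v(x,\tau)\,d\tau=\frac{1}{\calu(x)}\int_{\bn}G_{\bn}(x,y)\left[\int_{\tau_0}^{\tau_1}g(y,\tau)\,d\tau+g(y,\tau_0)-g(y,\tau_1)\right]\dvg.
\end{equation*}

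The whole lemma therefore reduces to the $\tau$-pointwise estimate
\begin{equation*}
\frac{1}{\calu(x)}\int_{\bn}G_{\bn}(x,y)|g(y,\tau)|\,\dvg\leq C\,E[w(\tau)]^{\bar\kappa}
\end{equation*}
for some $\bar\kappa=\bar\kappa(n,m)>0$, uniformly in $x\in\bn$ and $\tau\geq\tau_0$; plugging this into the above display immediately gives the claim with $K_1=2C$ and $K_2=C$. Because $|v(\cdot,\tau)|$ is uniformly bounded by \eqref{decayvprime}, the pointwise comparison $|g|=\calu^p|(1+v)^p-1|\leq M\calu^p|v|$ further reduces matters to controlling $\int_{\bn}G_{\bn}(x,y)\calu^p(y)|v(y,\tau)|\dvg$.

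To bound this latter integral I would split $\bn$ according to the size of $|v(y,\tau)|$, using a threshold of the form $E[w(\tau)]^\alpha$ with $\alpha\in(0,\tfrac12)$ to be optimised. On the set $\{|v|\leq E^\alpha\}$ the bound follows at once from the key pointwise estimate $\int_{\bn}G_{\bn}(x,y)\calu^p(y)\,\dvg\leq C\calu(x)$ singled out in the paragraph preceding the theorem, giving a contribution of order $E^\alpha\calu(x)$. On $\{|v|>E^\alpha\}$ the weighted Chebyshev inequality in the measure $\calu^{p+1}\dvg$ shows that the set has weighted mass at most $E^{1-2\alpha}$; combined with H\"older, the uniform bound $|v|\leq C$ (to absorb higher powers of $|v|$ into $v^2$), and the integrability of $G_{\bn}(x,\cdot)\calu^{p-1}(\cdot)$ against a suitable weight — integrability which, via the exponential asymptotics of $\calu$, requires precisely $p>2$ — one extracts a factor $E^\beta$ with $\beta>0$ together with the needed $\calu(x)$. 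Optimising $\alpha$ against these two contributions produces the required exponent $\bar\kappa$.

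The hard part will be establishing the key pointwise bound $\int_{\bn}G_{\bn}(x,y)\calu^p(y)\,\dvg\leq C\calu(x)$ and the companion integrability of $G_{\bn}(x,\cdot)\calu^{p-1}(\cdot)$, both of which I plan to prove by decomposing $\bn$ into an interior region $\{d(0,x)\leq R_0\}$ and a boundary region $\{d(0,x)>R_0\}$. In the interior the estimates follow from local integrability of $G_{\bn}$ in dimension $n$ and from $\calu\in L^\infty(\bn)\cap L^{p+1}(\bn)$. In the boundary region one exploits the sharp asymptotics $\calu(y)\sim e^{-(n-1)d(0,y)}$, $G_{\bn}(x,y)\sim e^{-(n-1)d(x,y)}$, the hyperbolic triangle inequality, and the volume growth of $\bn$ to verify that the $y$-integral decays at the same exponential rate $e^{-(n-1)d(0,x)}$ as $\calu(x)$; the integrability at infinity precisely encodes $\calu^{p-1}\in L^1(\bn)$, equivalent to $p>2$, which is why the restriction $n\in[3,5]$ enters the statement.
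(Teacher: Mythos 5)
Your first step coincides with the paper's: the same dual equation $-\Delta_{\bn}(w-\calu)=(w^p-\calu^p)-\partial_\tau w^p$, the same Green's representation, the same telescoping in time, and the same reduction to a bound, uniform in $x$, of $\calu(x)^{-1}\int_{\bn}G_{\bn}(x,y)|w^p-\calu^p|\,\dvg(y)$ by a power of the entropy. From there you genuinely diverge. The paper decomposes $\bn$ geometrically into $B_r(0)$, $B_r(x)$ and their complement (only for $x$ far from the origin), uses $d(x,y)\ge d(x,0)-d(y,0)$ together with $\calu^{p-1}\in L^1(\bn)$ (this is where $p>2$ enters for them), then needs a separate interior estimate (their Step 4, which exploits the radial gradient bound $|v'|\le C$) and finally optimizes an entropy-dependent radius $r_0$. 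You instead decompose by the level set $\{|v|>E^\alpha\}$, apply Chebyshev in the measure $\calu^{p+1}\dvg$, and lean on weighted Green's-function bounds that are uniform in $x$; this buys you a cleaner argument that dispenses with the interior estimate, the gradient bound on $v$, and the choice of $r_0$. Your ``key pointwise bound'' $\int_{\bn}G_{\bn}(x,y)\calu^p(y)\,\dvg(y)\le C\,\calu(x)$ is indeed true (here $-\Delta_{\bn}\calu=\calu^p$, so this is essentially the Green representation of $\calu$ itself), and your interior/boundary plan, or the paper's triangle-inequality argument, proves it when $p>2$. The only place where your sketch understates the work is the second region: ``integrability of $G_{\bn}(x,\cdot)\calu^{p-1}$ against a suitable weight'' is not enough; you need a quantitative decay estimate uniform in $x$, for instance via H\"older with a small $\theta>0$,
\begin{align*}
\int_{\{|v|>E^\alpha\}} G_{\bn}(x,y)\,\calu^p(y)\,\dvg(y)
\;\le\;\Bigl(\int_{\{|v|>E^\alpha\}}\calu^{p+1}\,\dvg\Bigr)^{\theta}
\Bigl(\int_{\bn} G_{\bn}(x,y)^{\frac{1}{1-\theta}}\,\calu(y)^{\frac{p-(p+1)\theta}{1-\theta}}\,\dvg(y)\Bigr)^{1-\theta},
\end{align*}
where the second factor must be shown to be $\le C\,\calu(x)^{\frac{1}{1-\theta}}$. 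With the crude bounds you propose (triangle inequality plus volume growth) this requires $\theta<2/n$ for the local singularity of $G_{\bn}^{1/(1-\theta)}$ and roughly $\theta<(p-2)/p$ for the integral at infinity, both available since $p>2$; then $\bar\kappa=\min\{\alpha,(1-2\alpha)\theta\}$ closes the argument, matching the lemma. So your route is correct and somewhat more streamlined, but the substance of the paper's three-region analysis reappears, inevitably, inside these weighted Green's-function estimates.
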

 
 \begin{proof}
 To prove the lemma, we partly follow and adapt the ideas of Bonforte-Figalli \cite[Lemma~4.4]{FD4} in our setting.
  Although due to the non-compactness of the hyperbolic space, it requires a new way to handle the integral involving the relative error.  The proof is divided into several steps. 
 
 \medskip 
 
 \noindent
 {\bf Step 1:} In this step, we shall find the {\it dual \rm} equation satisfied by the relative error. Since  $w \, = \, (v + 1) \, \calu$ a simple computation gives
 \begin{align*}
 -\Delta_{\bn} ((v + 1) \, \calu) = - \Delta_{\bn} (v \, \calu) \, - \, \Delta_{\bn} \calu \, = \, - \partial_{\tau} \, w^p \, + \, w^p.
 \end{align*}
 or equivalently, 
 \begin{equation}\label{equivalently-eq}
 v(\cdot, \tau) \, \calu \, = \, (-\Delta_{\bn})^{-1} \left( - \partial_{\tau} \, w^p \, + \, (w^p \, - \, \calu^p) \right).
 \end{equation}
 Therefore using the Green function representation we can write further 
 
  \begin{equation}\label{equivalently-eq}
  v(., \tau) \, \calu \, = \, - \int_{\bn} ( \partial_{\tau} \, w^p ) \, G_{\bn} (x, y) \, {\rm d}v_{\bn}(y) \, + \, \int_{\bn} (w^p \, - \, \calu^p) \, G_{\bn} (x, y) \, {\rm d}v_{\bn}(y),
  \end{equation}
 where $G_{\bn}(x, y)$ denotes the Green of $-\Delta_{\bn}.$ In the hyperbolic space, the Green function is 
  given by $G_{\bn}(x, y) := \int_{r}^{\infty} (\sinh s)^{-(n-1)} \, {\rm d}s$ and $r := d(x, y)$ denotes the hyperbolic distance between the points $x,y \in \bn$.  Hence the following sharp estimates of the Green function 
  $G_{\bn}$ holds for $n \geq 3$ (see also \cite[Eq~17]{RSZ}) :
   \begin{align} \label{green-function}
  G_{\bn} (x, y) \asymp
\begin{cases}
  (d(x, y))^{-(n-2)}, \quad  \mbox{for all} \ (x, y) \in \bn :  \ 0 < d(x, y) \leq 1, \\
  e^{-(n-1) d(x, y)}, \quad \quad \mbox{for all} \ (x, y) \in \bn :   d(x, y) > 1.
\end{cases}
\end{align}
Here $f \asymp g$ denotes they are comparable namely, there exist positive constants $C_1, C_2,$ depending only on $n,$
 such that $C_1 \, g(x) \leq f(x) \leq C_2 \, g(x)$ for all $x \in \bn$. 
 
 Now integrating \eqref{equivalently-eq} over $(\tau_0, \tau_1)$ we obtain
 
 \begin{align*}
 \calu (x) \, \int_{\tau_0}^{\tau_1} v(x, \tau) \, {\rm d}\tau  \, & = \,  \underbrace{\int_{\bn} \left( w^p(y, \tau_0) \, - \,  w^p(y, \tau_1) \right) \, G_{\bn}(x, y) \, \dvg(y)}_{:= I_1} \notag \\
 & + \underbrace{\int_{\tau_0}^{\tau_1} \int_{\bn} \left( w^p(y, \tau) \, - \, \calu^p(y) \right) \, G_{\bn}(x, y) \, \dvg(y)}_{:= I_2}.
 \end{align*}
  Next, we estimate $I_1$ and $I_2$ separately. 
  
  \medskip 
  
  \noindent
  {\bf Step 2 :} In this step,  we collect some straightforward estimates using the properties of  $\calu$ and $w$ stated in Section \ref{fd sec}. We first recall the inequality 
  $|a^p \, - \, b^p| \leq p \, \mbox{max}( a^{p-1}, b^{p-1}) |a - b|,$ which holds for all $a, b > 0$ and $p \geq 1.$ Moreover, using the fact that $v(\tau) \rightarrow 0$  we establish 
  $\frac{1}{2} \, \calu \leq w \leq \frac{3}{2} \, \calu.$ Therefore we have the following estimates

  \begin{align}\label{error-1}
  \left| w^p(y, \tau_0) \, - \, w^p (y, \tau_1) \right| & \leq p \left(\frac{3}{2} \right)^{p-1} \, \calu^{p-1} \left| w(y, \tau_0) \, - \, w(y, \tau_1) \right| \notag \\
  & := K_1 \, \calu^p \left| v(y, \tau_0) \, - \, v(y, \tau_1) \right| \, \leq 3K_1 \, \calu^p.
  \end{align} 
  Analogously, 
  
  \begin{align}\label{error-2}
  \left| w^p(y, \tau) \, - \, \calu^p(y) \right| \leq K_1 \, \calu^{p} \, \left| v(y, \tau) \right| \leq 4K_1 \, \calu^p.
  \end{align}
 Therefore in terms of linear Entropy, we can write 
 
 \begin{align*}
 \int_{\bn} v(x,\tau)^2 \, \calu^{p+1}(x) \, \dvg \, = \, \int_{\bn} (w(x,\tau) \, - \, \calu(x))^2 \, \calu^{p-1} \, \dvg \, =\, E[w(\tau)] .
 \end{align*}
  As a consequence of the above, we can write 
  
  \begin{align*}
  \int_{\bn} (v(y, \tau_0) \, - \, v(y, \tau_1))^2 \, \calu^{p+1} \, \dvg & \leq 2\int_{\bn} (v(y, \tau_0))^2 \, \calu^{p+1} \, \dvg \, + \, 2 \int_{\bn} (v(y, \tau_1))^2 \, \calu^{p+1} \, \dvg \\
  & \leq K_2 \,  \sup_{\tau \in [\tau_0, \tau_1]} E[w(\tau)] .
  \end{align*}
  Using H\"older inequaliity and assuming $p > 2,$ we obtain 
 
  \begin{align*}
  \int_{\bn} |w(y,\tau) \, - \, \calu(y)| \, \calu(x)^p \, \dvg &\leq \left( \int_{\bn} |w(y,\tau) \, - \, \calu(y)|^2 \, \calu^{p+1}(y) \, \dvg \right)^{\frac{1}{2}} \underbrace{\left( \int_{\bn} \calu^{p-1} \, \dvg
  \right)^{\frac{1}{2}}}_{\leq C, \quad  \mbox{for} \ p > 2} \notag \\
  & \leq K_2 \, E[w(\tau)]^{\frac{1}{2}}.
  \end{align*}
  A similar argument gives us
  \begin{align*}
 \int_{\bn} \left| v(y,\tau_0) \, - \, v(y,\tau_1) \right| \, \calu^{p} \, \dvg(y) \leq K_2 \, \sup_{\tau \in [\tau_0,\tau_1]}E[w(\tau)]^{\frac{1}{2}}
  \end{align*}
 
   for $p >2.$
   
  \medskip 
  
  \noindent
  {\bf Step 3: Estimate of $I_1$ and $I_2$.} First, we estimate $I_1.$ Let $1< r < \frac{d(x, 0)}{2}$ to be fixed later, and $d(x,0)$ large enough
  
  \begin{align*}
  I_1 &:= \underbrace{\int_{B_r(0)} \left| w^p(y, \tau_0) \, - \,  w^p(y, \tau_1) \right| \, G_{\bn}(x, y) \, \dvg(y)}_{:= I_1^1} \, \\
  & + \, \underbrace{ \int_{B_r(x)} \left| w^p(y, \tau_0) \, - \,  w^p(y, \tau_1) \right| \, G_{\bn}(x, y) \, \dvg(y)}_{:= I_1^2} \, \\
  & + \underbrace{ \int_{\bn \setminus (B_r(0) \cup B_r(x))} \left| w^p(y, \tau_0) \, - \,  w^p(y, \tau_1) \right| \, G_{\bn}(x, y)}_{:= I_1^3} \, \dvg(y). \, 
  \end{align*}
 Now we estimate seperately each of $I_1^i,$ for $i  =1, 2, 3.$ 
 
 \medskip
 
 \noindent
 {\bf Estimate of $I_1^1:$} In this domain $d(x,y) \geq r >1$ and hence using \eqref{green-function} and estimates on $\calu$ we  get

 \begin{align*}
 | I_1^1| \leq K_1 \, \int_{B_r(0)} \left| w^p(y, \tau_0) \, - \,  w^p(y, \tau_1) \right| \, e^{-(n-1)d(x, y)} \, \dvg(y).
 \end{align*} 
  In $B_r(0)$, $d(x, y) \geq r,\,  d(y, 0) \leq r, \  \mbox{and} \  d(x, 0) > 2r $ and also we have 
   \begin{align*}
   d(x, y) \geq d(x, 0) - d(y, 0).
   \end{align*}
Using this we further estimate  $I_1^1$ 
   \begin{align}\label{esti-I_1^1}
  | I_1^1 | &\leq K_1 \, \int_{B_r(0)} \left| v^p(y, \tau_0) \, - \,  v^p(y, \tau_1) \right| \, \calu^{p}(y) \underbrace{e^{-(n-1)d(x, 0)}}_{: \asymp \, \calu(x)} \, e^{(n-1)d(y, 0)} \dvg(y) \notag \\
  & \leq K_1 \, \calu (x) \int_{B_r(0)} \left| v^p(y, \tau_0) \, - \,  v^p(y, \tau_1) \right| \, \calu^{p}(y) \, e^{(n-1)d(y, 0)} \dvg(y) \notag \\
  & \leq K_1 \, e^{(n-1)r} \calu (x) \int_{B_r(0)} \left| v^p(y, \tau_0) \, - \,  v^p(y, \tau_1) \right| \, \calu^{p}(y) \, \dvg(y) \notag \\
  & \leq K_2 \, e^{(n-1)r} \calu (x)\sup_{\tau\in[\tau_0,\tau_1]} E[w(\tau)]^{\frac{1}{2}}.
  \end{align}
 
 \medskip
 
 \noindent
{\bf Estimate of $I_1^2$:} Using \eqref{green-function}, \eqref{error-1},  and estimate on $\calu.$

 \begin{align*}
 | I_1^2|  & \leq K_1 \, \int_{B_r(x)} \left| w^p(y, \tau_0) \, - \,  w^p(y, \tau_1) \right| \,  \, G_{\bn}(x, y) \, \dvg(y) \\
 & \leq K_1 \, \int_{B_r(x)} \calu^p(y) \, G_{\bn}(x, y) \, \dvg(y)  
 \end{align*} 
 In $B_r(x)$ we have 
   $d(x, y) \leq r$, $d(y, 0) \geq r,$ and $$d( y, 0) \geq d(x, 0) - d(x, y).$$ 
   
   Though not necessary to it's full generality let us estimate the integral of the Green's function over $B_r(x)$ for every $r:$ For $0< s \leq 1$ it holds $\sinh s \leq s \cosh 1$. Therefore,  for $0< s \leq 1$  
   \begin{align*}
\int_{B_s(x)}  G_{\bn}(x, y) \, \dvg(y)\leq C_1\int_0^s \frac{(\sinh t)^{n-1}}{t^{n-2}}dt\leq \int_0^s\frac{t^{n-1}}{t^{n-2}}\left(\frac{\sinh t}{t}\right)^{n-1}dt\leq C_1(\cosh 1)^{n-1}s^2,
   \end{align*}
   where $C_1>0$ is a constant depending of dimension $n.$ On the other hand, for $r \geq 1,$ we have 
 $\int_{B_r(x)}  G_{\bn}(x, y) \, \dvg(y)\leq C_2 \, r,$
  where $C_2>0$ is again a dimensional constant.  Combining the above two estimates we obtain $$ \int_{B_r(x)}  G_{\bn}(x, y) \, \dvg(y) \leq C\, r^2,$$ 
  for any $r >0$ and for some positive constant $C:= C(n).$
  Using the above estimates, we estimate $I_1^2$ further as below
  \begin{align}\label{esti-I_1^2}
    | I_1^2 | &\leq  K_2 \int_{B_r(x)} e^{-(n-1) d(y, 0)p} \, G_{\bn}(x, y) \, \dvg(y) \notag \\
    & \leq K_2 \int_{B_r(x)} e^{-(n-1) d(x, 0)p} \, e^{(n-1) d(x, y)p} \, G_{\bn}(x, y) \, \dvg(y) \notag \\
    & \leq K_2  \, \calu(x) \, e^{-(n-1)(p-1) d(x, 0)} \, e^{(n-1)p r} \underbrace{\int_{B_r(x)}  G_{\bn}(x, y) \, \dvg(y)}_{\leq \, C\, r^2} \notag \\
    & \leq K_2  \, \calu(x) \, e^{-(n-1)(p-1) 2r} \, e^{(n-1)p r} \, r^2 = K_2 \, \calu(x) \, e^{-(n-1)(p-2) r} \, r^2.
\end{align}

\medskip

\noindent
{\bf Estimate of $I_1^3:$} We again use \eqref{green-function}, \eqref{error-1}, and  estimates on $\calu.$

 \begin{align*}
 | I_1^3| & \leq  \, \int_{\bn \setminus (B_r(0) \cup B_r(x))} \left| w^p(y, \tau_0) \, - \,  w^p(y, \tau_1) \right|  \, G_{\bn}(x, y) \, \dvg(y) \\
 & \leq  K_1 \, \int_{\bn \setminus (B_r(0) \cup B_r(x))} \calu^p(y) \, G_{\bn}(x, y) \, \dvg(y).
 \end{align*} 
In $\bn \setminus (B_r(0) \cup B_r(x))$ we have 
   $d(x, y) \geq r >1,\,  d(y, 0) \geq r,  $ and also we have 
   $$
   d( x, y) \geq d(x, 0) - d( y, 0).
   $$

  Hence using this we estimate  $I_1^3$ further 

 \begin{align}\label{esti-I_1^3}
  | I_1^3| & \leq K_2 \int_{\bn \setminus (B_r(0) \cup B_r(x))} \calu^{p} \underbrace{e^{-(n-1) d(x, 0)}}_{\asymp \, \calu(x)} \underbrace{e^{(n-1)d(y, 0)}}_{\asymp \, (\calu (y))^{-1}} \, \dvg(y) \notag \\
  & \leq K_2 \, \calu(x) \int_{\bn \setminus (B_r(0) \cup B_r(x))} \calu^{p-1}(y) \, \dvg(y) \notag \\
  & \leq K_2 \, \calu(x) \int_{\{d(0,y)>r\}} \calu^{p-1}(y) \, \dvg(y) \leq K_2 \, \calu(x) \, e^{-(n-1) (p-2)r}.
   \end{align}

Now combining \eqref{esti-I_1^1}, \eqref{esti-I_1^2} and \eqref{esti-I_1^3}, we obtain 

\begin{align*}
\left| I_1\right| & \leq K_1\,  \calu(x) e^{-(n-1) (p-2) r} \,r^2 \; + \, K_2\;  \calu(x) e^{(n-1)r} \, E[w(\tau)]^{\frac{1}{2}} \; + \; K_3 \, \calu(x) e^{-(n-1) (p-2)r}, \notag \\
& \leq \calu(x)\left(K_1 \, e^{-(n-1) (p -2 - \delta) r} \, + \, K_2 \, E[w(\tau)]^{\frac{1}{2}} e^{(n-1)r}\right).
\end{align*}
where constants $K_1, K_2, K_3$ are depending only on $n, p,$ and  for $\delta >0.$ 

%Now we consider the function 
%$F(r) = A \, e^{-(n-1) (p -2 - \delta) r} \, + \, B \, e^{(n-1)r},$ and the function has minimum at $r_{\mbox{min}} \,=\, \frac{1}{(n-1)(p-1-\delta)} \, \log\left(\frac{A(p-2-\delta)}{B} \right).$
%The functional value at $F(r_{\mbox{min}}) = A^{-(n-2)} B^{(n-1)} (p-2 - \delta)^{-(n-1)} \, + \, B^{\frac{p-2-\delta}{p-1-\delta}} A^{\frac{1}{p-1-\delta}} (p - 2- \delta)^{\frac{1}{p-1-\delta}}.$
%Therefore we can conclude 

%\begin{equation}\label{final-I_1^1}
%\left| (I_1)\right| \leq K_2 \, \calu(x) (E(w - \calu))^{\frac{n-1}{2}} 
%\end{equation}

 \medskip 

The estimate of $I_2$  essentially follows the same steps as in step~3. Here we shall use the 
estimate \eqref{error-2} instead of \eqref{error-1} to conclude 

\begin{align*}
&\int_{\bn} \left| w^p(y, \tau) \, - \, \calu^p(y) \right| \, G_{\bn}(x, y) \, \dvg(y) \\ 
\leq \ \ \ &\calu(x)\left(K_1 \, e^{-(n-1) (p -2 - \delta) r} \, + \, K_2 \, E[w(\tau)]^{\frac{1}{2}} e^{(n-1)r}\right)
\end{align*}
where $1< r < \frac{d(x, 0)}{2}.$ Hence integrating over $(\tau_0, \tau_1)$ we obtain 

\begin{equation*}
\left| I_2\right| \leq \, \calu(x) \, |\tau_1 - \tau_0| \left(K_1 \, e^{-(n-1) (p -2 - \delta) r} \, + \, K_2 \,  e^{(n-1)r}\left(\sup_{\tau \in [\tau_0,\tau_1]}E[w(\tau)]\right)^{\frac{1}{2}}\right).
\end{equation*}
%Then \eqref{eq-second-lemma} follows  combining  \eqref{imp-esti} with \eqref{final-I_1^1} and \eqref{final-I_1^1}. 

Note that this is not enough to conclude our result; hence, we need an interior estimate.

\medskip

\noindent
{\bf Step 4.} We fix $r_0$ large and to be chosen later. For $r \leq r_0$ we have 
\begin{align*}
v^2(r,\cdot)\calu(r) &= \int_{r}^{\infty} \frac{d}{ds}\left(v^2(s,\cdot)\calu(s)\right) \ ds \notag \\
&= \int_{r}^{\infty} \left(2v(s)v^{\prime}(s,\cdot)\calu(s) + v^2(s,\cdot) \calu^{\prime}(s)\right) \ ds  \notag \\
&\leq C \int_{r}^{\infty}|v(s,\cdot)|\calu(s) \ ds \notag \\
&\leq C\int_{\{d(0,y)>r\}}|w(y,\cdot)-\calu(y)|\underbrace{\calu(y) \ \dvg(y)}_{\asymp \ 1}\
\end{align*}
where $\prime$ denotes the radial differentiation and we have used the bounds (see \eqref{decayvprime} and Section \ref{fd sec})
\begin{align*}
|\calu^{\prime}| \leq C\calu, \ |v^{\prime}|  + |v | \leq C, \ \dvg \asymp (\sinh s)^{n-1} ds \asymp \calu(s)^{-1} ds.
\end{align*}
Simple H\"{o}lder inequality gives
\begin{align*}
v(r,.)^2\calu(r) &\leq \left(\int_{\bn} |w - \calu|^{p+1} \ \dvg\right)^{\frac{1}{p+1}} \left(\int_{\{d(0,y)>r\}} \calu^{\frac{p+1}{p}}(y) \ \dvg(y)\right)^{\frac{p}{p+1}} \\
&\leq CE[w(\tau)]^{\frac{1}{p+1}}\left(\int_{r}^{\infty} e^{-(n-1)(\frac{p+1}{p} - 1)s} \ ds\right)^{\frac{p}{p+1}} \\
&\leq CE[w(\tau)]^{\frac{1}{p+1}}e^{-(n-1)(1 - \frac{p}{p+1})r},
\end{align*}
where in the second inequality we have used the fact that $|w - \calu|^{p+1} = (w - \calu)^{2} \, |w - \calu|^{p-1} \leq C(w - \calu)^{2} \calu^{p-1}.$

As a result, we get
\begin{align*}
|v(r)| \leq CE[w(\tau)]^{\frac{1}{2(p+1)}}e^{(n-1)\frac{p}{2(p+1)}r}\leq CE[w(\tau)]^{\frac{1}{2(p+1)}}e^{(n-1)\frac{p}{2(p+1)}r_0}.
\end{align*}

\medskip

\noindent
{\bf Step 5:} Conclusion. For the simplicity of notations, we denote 
\begin{align*}
\mathcal{E} = \sup_{\tau \in [\tau_0, \tau_1]} E[w(\tau)].
\end{align*}
Choose $\gamma > 0$ very small ($2p\gamma <1$ would suffice) 
and set $e^{(n-1)r_0} = \mathcal{E}^{-\gamma} > >1$ for $\tau_0$ large by Theorem \ref{entropy decay}. Let $x \in \bn$ be any arbitrary element.

If $r_0 < \frac{1}{2}d(x,0),$ then estimates of Step 3 gives
\begin{align*}
\int_{\tau_0}^{\tau_1}v(x,\tau) \ d\tau &\leq C(1 + |\tau_1 - \tau_0|)\left(e^{-(n-1)(p-2-\delta)r_0} + \mathcal{E}^{\frac{1}{2}}e^{(n-1)r_0}\right)\\
&\leq C(1 + |\tau_1 - \tau_0|)\left(\mathcal{E}^{\gamma(p-2-\delta)} + \mathcal{E}^{\frac{1}{2}-\gamma}\right) \\
&\leq C(1 + |\tau_1 - \tau_0|)\mathcal{E}^{\bar\kappa_1},
\end{align*}
where $\bar\kappa_1 := \min\{ \gamma(p-2-\delta), \frac{1}{2} - \gamma\}.$ On the other hand if $r_0 \geq \frac{1}{2}d(x,0),$ then $x \in \overline{B_{2r_0}(0)}$ and estimates of step 4 gives
\begin{align*}
v(x,\tau) &\leq C \mathcal{E}^{\frac{1}{2(p+1)}}e^{(n-1)\frac{p}{(p+1)}r_0} \\
&\leq C \mathcal{E}^{\frac{1}{2(p+1)} - \frac{p\gamma}{p+1}} = C \mathcal{E}^{\bar\kappa_2} \qquad\forall\, \tau \in[\tau_0, \tau_1],
\end{align*}
where $\bar\kappa_2 := \frac{1}{2(p+1)} - \frac{p\gamma}{p+1}.$ As a result the choice of $\gamma$ can be made by imposing $2p\gamma < 1.$
Choosing $\bar\kappa = \min\{\bar\kappa_1, \bar\kappa_2\}$ we get the desired result.

 \end{proof}

\noindent
{\bf Proof of Theorem \ref{linftyentropy}:} Using Lemma~\ref{intermediate} we get the following bounds: for every $\tau_1 \geq \tau_0 \geq \frac{1}{(1-m)T}\ln 2$ 

\begin{align*}
v(x, \tau_1) &\leq \frac{2m}{T(1-e^{-\frac{2m}{T}(\tau_1 - \tau_0)})} \int_{\tau_0}^{\tau_1} v(x,\tau) \ d\tau + \frac{2m^2}{T^2}(\tau_1 - \tau_0)^2\frac{1}{(1-e^{-\frac{2m}{T}(\tau_1 - \tau_0)})}
\notag \\
&\leq \frac{e^{\frac{2m}{T}(\tau_1-\tau_0)}}{\tau_1 - \tau_0}\int_{\tau_0}^{\tau_1} v(x,\tau) \ d\tau  + \frac{2m}{T}(\tau_1-\tau_0)e^{\frac{2m}{T}(\tau_1 - \tau_0)},
\end{align*}
where, to obtain the second inequality we have used the inequality  $1-e^{-\frac{2m}{T}(\tau_1 - \tau_0)}\geq \frac{2m}{T}(\tau_1 - \tau_0) e^{-\frac{2m}{T}(\tau_1 - \tau_0)}$.

On the other hand, for every $\tilde\tau_1 \geq \tilde\tau_0 \geq \frac{1}{(1-m)T}\ln 2$
\begin{align*}
v(x, \tilde\tau_0) &\geq -\frac{2m}{T(e^{\frac{2m}{T}(\tilde\tau_1 - \tilde\tau_0)}-1)} \Big |\int_{\tilde\tau_0}^{\tilde\tau_1} v(x,\tau) \ d\tau \Big|- \frac{2m^2}{T^2}(\tilde\tau_1 - \tilde\tau_0)^2\frac{e^{\frac{2m}{T}(\tilde\tau_1 - \tilde\tau_0)}}{(e^{\frac{2m}{T}(\tilde\tau_1 - \tilde\tau_0)}-1)} \notag \\
&\geq -\frac{1}{\tilde \tau_1 - \tilde \tau_0}\Big |\int_{\tilde\tau_0}^{\tilde\tau_1} v(x,\tau) \ d\tau \Big| - \frac{2m}{T}(\tilde \tau_1 - \tilde \tau_0)e^{\frac{2m}{T}(\tilde\tau_1 - \tilde\tau_0)},
\end{align*}
where, to obtain the second inequality we have used the inequality $e^{\frac{2m}{T}(\tilde\tau_1 - \tilde\tau_0)}\geq 1+\frac{2m}{T}(\tilde\tau_1 - \tilde\tau_0)$.

In either case, choosing $\tilde \tau_0 = \tau_1, \tilde \tau_1 - \tilde \tau_0 = \tau_1 - \tau_0$ and using Lemma~\ref{second-lemma} we can estimate 

\begin{align*}
 \left| \int_{s_0}^{s_1} v(x, \tau) \, {\rm d}t \right| \, \leq \, \left( K_1 \, + \, K_2 (\tau_1 - \tau_0) \right) \left( \sup_{\tau \in [\tau_0, 2\tau_1 - \tau_0]} E (w(\tau)) \right)^{\bar\kappa},
\end{align*}
where $s_0 = \tau_0$ or $\tilde \tau_0$ and $s_1 = \tau_1$ or $\tilde \tau_1.$
Combining all we get
\begin{align*}
 \|v(.,\tau_1) \|_{L^{\infty}(\bn)} \, \leq &\, K_1 \, \dfrac{e^{2m(\tau_1 - \tau_0)}}{\tau_1 - \tau_0} \left( K_1 \, + \, K_2 (\tau_1 - \tau_0) \right)\left(  \sup_{\tau \in [\tau_0, 2\tau_1-\tau_0]}E[w(\tau)] \right)^{\bar\kappa} \notag \\
 &+ \, \frac{2m}{T} (\tau_1 - \tau_0) e^{2m(\tau_1 - \tau_0)},
\end{align*}

for every $\tau_1 \geq \tau_0$ large enough. Now the result follows once we impose $\tau_1 - \tau_0 \leq 1.$
\hfill$\square$

\medskip

\subsection{Convergence in relative error with rate} With all this information from the last sub-section in hand, we can now state and prove the relative uniform decay rate.

\begin{theorem}\label{th:7.2}
Let $3 \leq n \leq 5,$ and $m \in (m_s, \frac{1}{2})$ and $v$ be the relative error defined by \eqref{error}. Then there exists a constant $\tilde\kappa = \tilde\kappa(n,m)$ and $C = C(n,m,u_0)$ such that 

\begin{align*}
\|v(.,\tau)\|_{L^{\infty}(\bn)} \leq Ce^{-\tilde\kappa\tau}
\end{align*}
for all $\tau \geq \tau_0$ with $\tau_0$ large enough.
\end{theorem}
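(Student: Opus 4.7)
The plan is to couple the exponential decay of the linear entropy from Theorem~\ref{entropy decay} with the pointwise smoothing estimate of Theorem~\ref{linftyentropy}, and then balance the two resulting terms via a $\tau$-dependent choice of the time increment $\Delta := \tau_1 - \tau_0 \in (0,1]$. Note that the hypothesis $m \in (m_s, 1/2)$ is exactly what is needed to legitimately invoke Theorem~\ref{linftyentropy} (it corresponds to $2 < p < \tstar - 1$, whence $3 \leq n \leq 5$).

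First, I would fix $\tau$ large, apply Theorem~\ref{linftyentropy} with $\tau_1 = \tau$ and $\tau_0 = \tau - \Delta$, and control the supremum of the entropy on the right-hand side by Theorem~\ref{entropy decay}. Since $\Delta \leq 1$, for $\tau$ large
\begin{equation*}
\sup_{s \in [\tau - \Delta, \tau + \Delta]} E[w(s)] \leq C\, e^{-\frac{2\kappa}{p+1}(\tau - 1)},
\end{equation*}
and together with $e^{2m\Delta} \leq e^{2m}$ on $\Delta \in (0,1]$, Theorem~\ref{linftyentropy} yields
\begin{equation*}
\|v(\cdot, \tau)\|_{L^\infty(\bn)} \leq \frac{C_1}{\Delta}\, e^{-\beta \tau} \, + \, C_2\, \Delta, \qquad \beta := \frac{2\kappa \bar\kappa}{p+1},
\end{equation*}
where $C_1, C_2$ are independent of $\tau$ and $\Delta$.

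The second step is to optimize over $\Delta$. The natural choice is $\Delta = e^{-\beta \tau / 2}$, which for $\tau$ large lies in $(0,1]$ and balances the two terms, producing
\begin{equation*}
\|v(\cdot, \tau)\|_{L^\infty(\bn)} \leq (C_1 + C_2)\, e^{-\tilde\kappa \tau}, \qquad \tilde\kappa := \frac{\kappa\bar\kappa}{p+1}.
\end{equation*}
Since $\kappa$ and $\bar\kappa$ depend only on $n$ and $m$, so does $\tilde\kappa$, which matches the claim.

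In this approach, no genuine obstacle remains at this stage: the hard analysis is already absorbed in Theorem~\ref{linftyentropy}, whose proof required the very delicate boundary/interior splitting forced by the non-compactness of $\bn$ and the integrability threshold $\calu^{p-1}\in L^1(\bn)$. Once that smoothing estimate is available, the passage from $L^{p+1}$-entropy decay to uniform relative-error decay is a standard ``balance the two competing powers of $\Delta$'' argument, and the rate $\tilde\kappa$ is determined completely by the product $\kappa \bar\kappa$ through the exponent $p+1$.
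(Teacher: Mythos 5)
Your proposal is correct and is essentially the paper's own argument: both combine Theorem~\ref{entropy decay} with the smoothing estimate of Theorem~\ref{linftyentropy} and balance the two terms by a suitable choice of the time increment, arriving at the same rate $\tilde\kappa=\kappa\bar\kappa/(p+1)$. The only cosmetic difference is that you fix the target time $\tau$ and take $\Delta=e^{-\beta\tau/2}$ explicitly, whereas the paper anchors at $\tau_0$ and sets $\tau_1-\tau_0=\bigl(\sup_{[\tau_0,\tau_0+2]}E[w]\bigr)^{\bar\kappa/2}$, which is the same balancing in disguise.
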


\begin{proof}
By Theorem \ref{entropy decay} we know that the linear entropy decays exponentially: there exists $\kappa >0$ such that 
\begin{align*}
E[w(\tau)] \leq C e^{-\frac{2\kappa}{p+1}\tau}, \ \ \tau \geq \tau_0
\end{align*}
for $\tau_0$ large enough. By enlarging $\tau_0$ we can assume $E[w(\tau)] \leq 1$ for all 
$\tau \geq \tau_0.$ Set 
\begin{align*}
\tau_1 = \tau_0 + \left(\sup_{\tau \in [\tau_0, \tau_0 + 2]} E[w(\tau)]\right)^{\frac{\bar\kappa}{2}},
\end{align*}
where $\bar\kappa$ is as in Theorem \ref{linftyentropy}. Then 
\begin{align*}
\tau_1 - \tau_0 \leq 1, \ \ 2 \tau_1 - \tau_0 \leq \tau_0 + 2, \ \ e^{2m(\tau_1 - \tau_0)} \leq e^{2m}.
\end{align*}
Applying Theorem \ref{linftyentropy} with this choice of parameters we get
\begin{align*}
\|v(.,\tau_1)\|_{L^{\infty}(\bn)} &\leq (K_1+2m)e^{2m} \left(\sup_{\tau \in [\tau_0 , \tau_0+2]} E[w(\tau)]\right)^{\frac{\bar\kappa}{2}}, \notag \\
&\leq C((K_1+2m)e^{2m}) e^{-\frac{\kappa\bar\kappa}{p+1}\tau_0} \notag \\
&\leq C((K_1+2m)e^{2m}) e^{-\frac{\kappa\bar\kappa}{p+1}\tau_1} e^{\frac{\kappa\bar\kappa}{p+1}(\tau_1-\tau_0)}\notag \\
&\leq \tilde 
Ce^{-\frac{\kappa\bar\kappa}{p+1}\tau_1}
\end{align*}
with $\tilde C = C((K_1+2m)e^{2m}) e^{\frac{\kappa\bar\kappa}{p+1}}.$ This completes the proof with $\tilde\kappa = \frac{\kappa\bar\kappa}{p+1}.$
\end{proof}

\medskip

\noindent
{\bf Proof of Theorem \ref{asymptotic}.} The proof directly follows from Theorem~\ref{th:7.2} by substituting the definition of $v$ and using the relation between $\tau$ and $t$. Here $\mu=\frac{\tilde\kappa}{1-m}$ and $\tilde\kappa$ is as found in Theorem~\ref{th:7.2}.
\hfill{$\square$}

\medskip

\section{Stability results for Hardy-Sobolev-Maz'ya inequalities}\label{hsm sec}

In this section, we establish the stability results of the Hardy-Sobolev-Mazy'a inequality in the spirit of Theorem~\ref{maintheorem-1}. First, we recall Hardy-Sobolev-Maz'ya (HSM) inequality \cite[2.1.6 Corollary~3]{VGM}: 
 
 \medskip
 
 \noindent 
 {\bf HSM-inequality.} Let $N \geq 3,$ we fix parameters $k, h \in \mathbb{N}$ such that $2 \leq k < N, \ N = k + h,$ and let $1 < p \leq \frac{N+2}{N-2}$ be given and set $t = N - \frac{(N-2)(p+1)}{2} \geq 0.$ We denote a generic point of $ \mathbb{R}^N$ by $ (y,z)$ where $y \in \mathbb{R}^k$ and $z \in \mathbb{R}^h.$ The HSM-inequality asserts that for every $\mu \leq \frac{(k-2)^2}{4},$ there exists a best constant $\mathcal{S}(N,k,p,\mu)$ such that
 
 %there exists exists a constant $\mathcal{S}(n, p)$ such that for every $v$ in the class $C_c^{\infty}(\mathbb{R}^k \times \mathbb{R}^h)$ it holds 

%\begin{align}\label{inq-HSM}
%\mathcal{S}(n,p)
%\left( \int_{\mathbb{R}^k \times \mathbb{R}^h} \frac{|v|^{p+1}}{|y|^t}  \,  {\rm d}y \, {\rm d}x\right)^{\frac{2}{p+1}} \leq
%\int_{\mathbb{R}^k \times \mathbb{R}^h} \left( |\nabla v|^2 \,- \,  \frac{(k-2)^2}{4} \, \frac{v^2}{|y|^2} \right) \,  {\rm d}y \, {\rm d}z.
%\end{align}
%subject to the condition that $v(0, z) \equiv 0,$ in case $k =1.$ 

%As a corollary, for every $\mu \leq \frac{(k-2)^2}{4},$ there exists a best constant $\mathcal{S}(n,p,\mu)$ such that 
\begin{align}\label{inq-HSM2}
\mathcal{S}(N,k,p,\mu)
\left( \int_{\mathbb{R}^k \times \mathbb{R}^h} \frac{|v|^{p+1}}{|y|^t}  \,  {\rm d}y \, {\rm d}z\right)^{\frac{2}{p+1}} \leq
\int_{\mathbb{R}^k \times \mathbb{R}^h} \left( |\nabla v|^2 \,- \,  \mu \, \frac{v^2}{|y|^2} \right) \,  {\rm d}y \, {\rm d}z,
\end{align}
holds for every $v \in C_c^{\infty}(\mathbb{R}^k \times \mathbb{R}^h)$. By density \eqref{inq-HSM2} can be extended to the closure with respect to the norm defined on the right-hand side of \eqref{inq-HSM2}.

%Whether the best constant $\mathcal{S}(n,p,\mu)$\eqref{inq-HSM2} is 
%A priori by rearrangement techniques one can show that should inequality \eqref{inq-HSM2} holds, it is enough to validate it on the class of functions which are symmetric about $y$-variable, i.e., $v(y,z) = v(|y|,z)$ by abuse of notations. From now on we will restrict ourselves to such classes. 

Under the assumption $0 \leq \mu \leq \frac{(k-2)^2}{4},$ the value $\mathcal{S}(n,p,\mu)$ does not change if we restrict to class of functions which are symmetric about $y$-variable, i.e., $v(y,z) = v(|y|,z)$ by abuse of notations \cite{GM, MS, FMS} (see also \cite{GhR} for results in bounded domain with $\mu = 0$). From now on we will restrict ourselves to such classes.
In particular, \eqref{inq-HSM2} holds for all 
 $v $ in the homogeneous Sobolev space $D^{1, 2}_{cyl}(\mathbb{R}^k \times \mathbb{R}^h)$ defined below and we will study the stability problem on this space.

\medskip

\begin{definition}
{\rm
We say a smooth function $v$ on $(\mathbb{R}^k \setminus \{ 0 \}) \times \mathbb{R}^h$ is  symmetric in $y$-variable if for any choice of $z \in \mathbb{R}^{h},$ $v(., z)$ is symmetric decreasing in $\mathbb{R}^k.$ We define $D^{1, 2}_{cyl}(\mathbb{R}^k \times \mathbb{R}^h)$ by the closure of 
\begin{align*}
\{v\in C_c^{\infty}((\mathbb{R}^k \setminus \{ 0 \}) \times \mathbb{R}^h) : v := v(|y|, z) \}
\end{align*}
%in $D^{1, 2}(\mathbb{R}^n)$ norm 
%of cylindrically symmetric $ C_0^{\infty}(\mathbb{R}^n)$ functions, i.e.,
% $D^{1, 2}_{cyl}(\mathbb{R}^n)$ is defined as the closure 
%of maps $v \in C_0^{\infty}((\mathbb{R}^k \setminus \{ 0 \}) \times \mathbb{R}^h),$ $u := u(|y|, z),$ 
with respect to the norm 
\begin{align*}
\| v \|_{\mu, \ D^{1, 2}_{cyl}(\mathbb{R}^k \times \mathbb{R}^h)} := \left( \int_{\mathbb{R}^k \times \mathbb{R}^h}\left[ |\nabla v|^2 \,-\, \mu \, \frac{v^2}{|y|^2} \right] \, {\rm d}y\, {\rm d}z   \right)^{\frac{1}{2}},
\end{align*}
where $0 \leq \mu \leq \frac{(k-2)^2}{4}.$ } If a function is symmetric in both $y$ and $z$ variable then we call it 
cylindrically symmetric.
\end{definition}
\medskip

Note that if $k=2$ then $\mu = 0$ and for other $k$ whenever $\mu < \frac{(k-2)^2}{4},\ \| \cdot \|_{\mu, D^{1, 2}_{cyl}(\mathbb{R}^k \times \mathbb{R}^h)}$ is equivalent Sobolev norm on the class of functions symmetric with respect to $y$-variable.
It is straightforward to derive that up to normalization, the Euler-Lagrange equation associated with the Hardy-Sobolev-Maz'ya inequality \eqref{inq-HSM2} is given by

\begin{equation}\label{eq-HSM}
-\Delta v(y,z) \; - \; \mu \; \frac{v(y,z)}{|y|^2} \; = \; \frac{|v(y,z)|^{p-1} v(y,z)}{|y|^t}, \quad x = (y, z) \in \mathbb{R}^k \times \mathbb{R}^h,
\end{equation}
where $t, p,\mu$ satisfy the same assumptions as in \eqref{inq-HSM2}.

\medskip

We define the HSM-deficit: for $v \in D^{1, 2}_{cyl}(\mathbb{R}^k \times \mathbb{R}^h)$
\begin{align}
\delta_{\mbox{\tiny{HSM}}}^{\mu} (v) = \left[\int_{\mathbb{R}^k \times \mathbb{R}^h} \left( |\nabla v|^2 \,- \,  \mu \, \frac{v^2}{|y|^2} \right) \,  {\rm d}y \, {\rm d}z - \mathcal{S}(N,k,p,\mu)\left( \int_{\mathbb{R}^k \times \mathbb{R}^h} \frac{|v|^{p+1}}{|y|^t}  \,  {\rm d}y \, {\rm d}z\right)^{\frac{2}{p+1}} \right]^{\frac{1}{2}}
\end{align}
and the corresponding Euler-Lagrange functional 

\begin{align*}
J_{\mbox{\tiny{HSM}}}^{\mu}(v) =  \frac{1}{2}\int_{\mathbb{R}^k \times \mathbb{R}^h}\left( |\nabla v|^2 \,-\, \mu \, \frac{v^2}{|y|^2} \right) \, {\rm d}y\, {\rm d}z   - \frac{1}{p+1} \int_{\mathbb{R}^k \times \mathbb{R}^h} \frac{|v|^{p+1}}{|y|^t}\, {\rm d}y\, {\rm d}z.
\end{align*}

To study the stability of \eqref{inq-HSM2} or \eqref{eq-HSM}, we must first ensure the uniqueness of positive solutions, up to the respective invariants. 

\medskip

The symmetry properties of positive solutions to \eqref{eq-HSM} is a delicate issue and even endure symmetry breaking in the $y$ variable if $\mu <0$ sufficiently large \cite[Theorem 0.4]{GM}. Indeed, the ground state solutions suffer symmetry breaking in $y$-variable for $\mu \leq \frac{(k-2)^2}{4} - \frac{k-1}{p-1} < 0$ for the sub-critical $p$ and $\mu <0$ for the critical $p$. However, the situation is much better for $\mu \geq 0.$ Using the rearrangement techniques such as the product of symmetrization, it has been proved in \cite{BT, SSW} that for $\mu =0,$ the best constant in \eqref{inq-HSM2} does not change if one restricts to the class of cylindrically symmetric functions (i.e., symmetric in both $y$ and $z$ variable). Regarding the Euler-Lagrange equation \eqref{eq-HSM}, it is first shown in \cite{ FMS, MS} for the special case $\mu = 0, t = 1$ (i.e., $p = \frac{2(N-1)}{N-2}$) that positive entire solutions are cylindrical symmetric. To the best of our knowledge, the best-known result to date on the symmetry properties is obtained by Gazzini and Musina in \cite{GM} which asserts that 
for $0 \leq \mu \leq \frac{(k-2)^2}{4}$ and $1 < p \leq \frac{N+2}{N-2}$ all positive, classical, finite energy solutions are cylindrically symmetric.

% except for the special case $\mu = 0$ and $p = \frac{N+2}{N-2}$ where we need to further assume that $v$ has a non-removable singularity on $\{y = 0\}$.
\medskip

A new pathway of studying extremals of inequalities
 \eqref{inq-HSM2} (or more generally, studying the Euler-Lagrange equation \eqref{eq-HSM}) has been proposed by Sandeep-Mancini in  \cite{MS} by lifting the problem (restricted to the class $D^{1, 2}_{cyl}(\mathbb{R}^k \times \mathbb{R}^h)$) on the upper half-space model of the hyperbolic space.

\medskip

\subsection{The upper half space model of the hyperbolic space}
 $\hn = \mathbb{R}_+\times \mathbb{R}^{n-1} := \{x=(r,z) \ | \ r >0, \ z \in \R^{n-1}\}$ endowed with the Riemannian metric $\frac{dx^2}{r^2}$  
 represents the upper half space model of the hyperbolic $n$-space. The volume element, the gradient vector field and the Laplace-Beltrami operator are given by
 \begin{align*}
 {\rm d}v_{\hn} = \frac{{\rm d}x}{r^n}, \ \ \ \nabla_{\hn} = r^2\nabla, \ \ \ \Delta_{\hn} = r^2\Delta - (n-2)r\partial_{r}
 \end{align*}
where $\partial_{r}$ is the derivative with respect to the $r$ variable and $dx, \ \nabla, \ \Delta$ are the Euclidean volume, gradient and Laplace operator respectively.

\subsection{Isometric lifting on the hyperbolic space}  
 Using a suitable transformation described below
 the authors of \cite{MS}  lifted the problem to the real hyperbolic space of the upper-half space model $\hn:$ 
 
 \medskip
 
 Given $n \geq 3$ and $0 \leq \mu < \frac{(k-2)^2}{4}$ we fix  parameters 
 \begin{align}  \label{several relations}
 \begin{cases}
 h = n-1, \ \ \ N = k+h, \ k \geq 3, \ \ \ \lambda = \mu + \frac{(n-1)^2 - (k -2)^2}{4}, \\ 
 \\
1 < p \leq \frac{N+2}{N-2}, \ \mbox{and} \ t = N - \frac{N-2}{2}(p+1).
 \end{cases}
 \end{align}

 Let $w (y, z) = w(|y|, z)$ be a  symmetric in $y$-variable solution of \eqref{eq-HSM}, and define
 $u(r, z) := r^{\frac{n-2}{2}} w(r, z).$ Then $u$ solves  the following problem: 
 \begin{equation}\label{trns-hyp}
 -\Delta_{\mathbb{H}^n} u - \lambda \, u = |u|^{p-1}u, \quad u \in H^1(\mathbb{H}^n).
 \end{equation}
 According to \cite{MS}, solutions of \eqref{trns-hyp} are unique up to hyperbolic isometries. Let $I(\hn)$ denote the isometry group of $\hn,$ then there exists a unique solution, denoted by $\calu,$ symmetric in $z$-variable such that $\{\calu \circ \tau \ | \ \tau \in I(\hn)\}$ are all the solutions to \eqref{trns-hyp}. In addition, the corresponding Sobolev spaces $H^1(\mathbb{H}^n)$ has an isometry with 
$D^{1, 2}_{cyl}(\mathbb{R}^k \times \mathbb{R}^h)$ and vice-versa:

\begin{itemize}
\item[$\bullet$]  Define the map $\mathcal{T}_k : C_c^{\infty}(\hn) \rightarrow D^{1, 2}_{cyl}(\mathbb{R}^k \times \mathbb{R}^h)$ as follows: let $\varphi \in C_c^{\infty}(\hn),$ we define the symmetric in $y$-variable function $\mathcal{T}_k(\varphi)(y,z) = r^{-\frac{N-2}{2}}\varphi(r,z),$ where $(y,z) \in \mathbb{R}^k \times \mathbb{R}^h, \ r = |y|.$ Then a straight forward computation gives for $\varphi, \psi \in C_c^{\infty}(\hn)$

\medskip

\begin{align}\label{several computations}
&\frac{1}{\omega_k} \int_{\mathbb{R}^k \times \mathbb{R}^h} \frac{\mathcal{T}_k(\varphi) \mathcal{T}_k(\psi)}{|y|^2} \, {\rm d}y \, {\rm d}z \, =\, \int_{\mathbb{H}^{n}} \varphi \psi \, {\rm d}v_{\hn}, \notag\\
\notag\\
&\frac{1}{\omega_k} \int_{\mathbb{R}^k \times \mathbb{R}^h} \frac{|\mathcal{T}_k(\varphi)|^{p-1}\mathcal{T}_k(\varphi)\mathcal{T}_k(\psi)}{|y|^t} \, {\rm d}y \, {\rm d}z \, =\, \int_{\mathbb{H}^{n}} |\varphi|^{p-1}\varphi \psi \, {\rm d}v_{\hn}, \notag\\
\notag\\
&\frac{1}{\omega_k} \int_{\mathbb{R}^k \times \mathbb{R}^h} \nabla \mathcal{T}_k(\varphi) \cdot \nabla \mathcal{T}_k(\psi)\, {\rm d}y \, {\rm d}z \, = \, \int_{\mathbb{H}^{n}} \left( \langle \nabla_{\hn} \varphi, \nabla_{\hn}\psi \rangle_{\hn} - \frac{h^2 - (k-2)^2}{4} \, \varphi\psi \right) \, {\rm d}v_{\hn},
\end{align}
\end{itemize}

\medskip

where $\omega_k = |\mathbb{S}^{k-1}|$ the surface measure of the $(k-1)$-dimensional sphere. As a result by density 
\begin{align*}
\frac{1}{\omega_k} J_{\mbox{\tiny{HSM}}}^{\mu}(\mathcal{T}_k(\varphi)) = I_{\lambda}(\varphi), \ \ \frac{1}{\omega_k}\delta_{\mu}(\mathcal{T}_k(\varphi)) = \delta_{\lambda}(\varphi).
\end{align*}
holds for every $\varphi \in H^1(\hn),$ where $I_{\lambda}$ is defined analogous to the Euler-Lagrange functional \eqref{ilambda} with $\bn$ replaced by  $\hn:$
%where $v \in D^{1, 2}_{cyl}(\mathbb{R}^k \times \mathbb{R}^h), \ u \in H^1(\hn)$ and they are related by $\ u(r,z) = r^{\frac{N-2}{2}}v(|y|, z), \ r = |y|$ and vice versa and $I_{\lambda}$ is defined by 
\begin{align*}
I_{\lambda}(u) = \frac{1}{2}\int_{\mathbb{H}^{n}} \left( |\nabla_{\hn} u|^2 - \lambda \, u^2 \right) \, {\rm d}v_{\hn} - \frac{1}{p+1}\int_{\mathbb{H}^{n}} |u|^{p+1} \, {\rm d}v_{\hn}
\end{align*}
and $\delta_{\lambda}$ is the deficit functional in Poincar\'{e}-Sobolev inequality with an emphasis on the dependence on $\lambda$.
Since $N > n$ we see that $1<p \leq \frac{N+2}{N-2} < \frac{n+2}{n-2}$ 
%and hence the uniqueness results of \cite{MS} translates to the uniqueness results of \eqref{eq-HSM} and
%the stability results of Section \ref{proof1} and \ref{proof2} applies.
%\begin{align*}
%\end{align*}
%They have established an "equivalence" of solutions, in the class of cylindrical symmetric positive solutions, 
 %between \eqref{eq-HSM} and \eqref{trns-hyp}. 
 and hence existence and uniqueness of \eqref{trns-hyp} applies and translates into the existence and uniqueness of cylindrically symmetric entire positive solutions of \eqref{eq-HSM}. In particular, under the assumption \eqref{several relations}
 %\begin{align*}
 %N \geq 5, \ 3 \leq k < N, \ \ 1 < p \leq \frac{N+2}{N-2}, \ \ t = N - \frac{N-2}{2}(p+1), \ 0 \leq \mu < \frac{(k-2)^2}{4}
 %\end{align*}
%\eqref{eq-HSM} all the solutions finite energy solutions are symmetric about $y$-variable and there exists a unique $V$  
% \begin{itemize}
 %\item for every $\mu \leq \frac{(k-2)^2}{4},$ if $k \geq 2$ and $p \leq \frac{n+2}{n-2}$ or $k = 1$ and $p < \frac{n+2}{n-2},$
  %\item for every $\mu \in ( 0, \frac{1}{4} ]$ if $k =1,$ $p = \frac{n+2}{n-2}$ and $h \geq 3,$
 %\end{itemize}
 all the solutions to \eqref{eq-HSM} are cylindrically symmetric about some point and hence solutions are unique up to scaling and translation in $\mathbb{R}^h.$ In other words, let $\mathcal{V}$ be the unique solution which is symmetric and decreasing about both $y$ and $z$ variable  then $\mathcal{V}_{R, z_0}(y, z) := R^{\frac{n-2}{2}}\mathcal{V}(Ry, Rz + z_0)$ forms the solution space for \eqref{eq-HSM} and hence 
 \begin{align*}
 \tilde{\mathcal{Z}_0} :=  \{ c\, \mathcal{V}_{R, z} : c \in \mathbb{R}, R \in \mathbb{R}^{+}, z \in \mathbb{R}^h \}
 \end{align*}
froms the $(h+2)$-dimensional manifold consisting of all the extremizers of \eqref{inq-HSM2}. For  $v \in D^{1, 2}_{cyl}(\mathbb{R}^k \times \mathbb{R}^h)$ we define the distance 
\begin{align*}
dist(v, \tilde{\mathcal{Z}_0}) = \| v - c\, \mathcal{V}_{R, z}\|_{\mu, \ D^{1, 2}_{cyl}(\mathbb{R}^k \times \mathbb{R}^h)}
\end{align*}

 \begin{remark}
 {\rm
 It is important to note that the equation~\ref{eq-HSM} admit ground state solutions for some $\mu < <0$ and $1< p < \frac{N+2}{N-2},$ which are not cylindrically
  symmetric \cite{GM}.
  }
 \end{remark}
 \medskip

Now we state our main stability result for Hardy-Sobolev-Maz'ya inequality.

\begin{theorem}\label{application-ineqHSM}
Let $N \geq 5, \ 3 \leq k < N, \ h \geq 2$ be such that $N = k + h.$ Let $1 < p \leq \frac{N+2}{N-2},$  and set $t = N - \frac{(N-2)(p+1)}{2}$ and let $0 \leq \mu < \frac{(k-2)^2}{4}.$
 There exists a constant $C=C(N, p, \mu,k)$ such that for 
all $v \in D^{1, 2}_{cyl}(\mathbb{R}^k \times \mathbb{R}^h)$ there holds 
\begin{align*}
 \hbox{dist} \, (v, \tilde{\mathcal{Z}_0})) \leq  C \delta_{\mbox{\tiny{HSM}}}^{\mu}(v).
\end{align*} 

\end{theorem}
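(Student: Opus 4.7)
The plan is to reduce Theorem~\ref{application-ineqHSM} to the Poincar\'e--Sobolev stability Theorem~\ref{maintheorem-1} through the isometric lifting $\mathcal{T}_k$ introduced earlier in the section. Set $n := h+1 \geq 3$ and
\[
\lambda := \mu + \frac{(n-1)^2 - (k-2)^2}{4}.
\]
Since $k \geq 3$ forces $N > n$, an elementary calculation gives $p \leq \frac{N+2}{N-2} < \frac{n+2}{n-2}$, so $(n,p)$ lies strictly in the subcritical range on $\hn$; the assumption $0 \leq \mu < \frac{(k-2)^2}{4}$ then immediately yields $\lambda < \frac{(n-1)^2}{4}$. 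Hence the triple $(n, p, \lambda)$ satisfies hypothesis (\textbf{H1}), and Theorem~\ref{maintheorem-1} is applicable on $\hn$ (which is isometric to the ball model $\bn$, so the stability statement transfers verbatim).

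Given $v \in D^{1,2}_{cyl}(\mathbb{R}^k \times \mathbb{R}^h)$, let $\varphi \in H^1(\hn)$ be the unique function with $v = \mathcal{T}_k(\varphi)$. Using the identities in \eqref{several computations} one obtains the dictionary
\[
\|v\|^2_{\mu, D^{1,2}_{cyl}} = \omega_k \|\varphi\|_\lambda^2, \qquad \int_{\mathbb{R}^k\times\mathbb{R}^h} \frac{|v|^{p+1}}{|y|^t}\,{\rm d}y\,{\rm d}z = \omega_k \int_{\hn} |\varphi|^{p+1}\, {\rm d}v_{\hn}.
\]
Optimizing the Rayleigh quotient on both sides gives $\mathcal{S}(N,k,p,\mu) = \omega_k^{(p-1)/(p+1)} S_{n,p,\lambda}(\hn)$, and a short algebraic computation then yields $\delta_{\mbox{\tiny{HSM}}}^\mu(v) = \omega_k^{1/2}\,\delta_\lambda(\varphi)$, where $\delta_\lambda$ denotes the Poincar\'e--Sobolev deficit on $\hn$.

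By the Mancini--Sandeep classification (Theorem~\ref{MSthm}), every positive finite-energy solution of \eqref{trns-hyp} on $\hn$ is of the form $\calu \circ \tau$ for some hyperbolic isometry $\tau$; transporting through $\mathcal{T}_k^{-1}$, these are in bijection with the cylindrically symmetric extremals $\mathcal{V}_{R,z_0}$, since in the upper half-space model the orbit of a bubble under hyperbolic isometries is generated precisely by the ambient dilations $(y,z) \mapsto (Ry, Rz)$ and Euclidean translations in the $z$-variable (any remaining isometry fixes the radial bubble). This gives the manifold identification
\[
\hbox{dist}(v, \tilde{\mathcal{Z}_0}) \,=\, \omega_k^{1/2}\,\hbox{dist}(\varphi, \mathcal{Z}_0),
\]
where $\mathcal{Z}_0$ denotes the Poincar\'e--Sobolev extremal manifold on $\hn$. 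Combining with Theorem~\ref{maintheorem-1} applied to $\varphi$ concludes the proof with $C(N, p, \mu, k) := C(n, p, \lambda)$.

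The main technical point to verify carefully is precisely the coincidence of the two extremal manifolds under $\mathcal{T}_k$: the $(h+2)$-dimensional family $\{c\,\mathcal{V}_{R,z_0}\}$ on the Euclidean side must account for every element of the $(n+1)$-dimensional manifold $\{c\,\calu \circ \tau\}$ on $\hn$, modulo the rotational stabilizer of the bubble. This is where the restriction to the symmetric-in-$y$ class (and to the regime $\mu \geq 0$) becomes essential: for $\mu$ sufficiently negative, the ground states of \eqref{eq-HSM} are known to break this symmetry and no longer arise from hyperbolic bubbles, so the entire reduction scheme breaks down outside the current hypothesis. Once this correspondence is cleanly established, the rest is a mechanical transport of norms through the lifting $\mathcal{T}_k$.
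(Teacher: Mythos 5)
Your proposal is correct and follows essentially the same route as the paper: lift $v$ to $H^1(\hn)$ via the Mancini--Sandeep transformation, check that $(n,p,\lambda)$ with $n=h+1$ and $\lambda=\mu+\frac{(n-1)^2-(k-2)^2}{4}$ falls into the subcritical case of {\bf (H1)}, identify the extremal manifold $\tilde{\mathcal{Z}_0}$ with $\mathcal{Z}_0$ through the correspondence $\mathcal{V}_{R,z_0}\leftrightarrow\calu\circ\tau$, transfer norms and deficit via \eqref{several computations}, and invoke Theorem~\ref{maintheorem-1}. Your bookkeeping of the $\omega_k$ factors and your group-theoretic justification of the manifold correspondence (transitivity of dilations and $z$-translations, with rotations about the centre fixing the radial bubble) are slightly more explicit than the paper's appeal to the cylindrical-symmetry classification, but this is the same argument in substance.
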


\medskip 

Next, we establish a theorem in the spirit of Theorem~\ref{maintheorem-2}. Namely, if a non-negative function $v \in D^{1, 2}_{cyl}(\mathbb{R}^k \times \mathbb{R}^h)$ almost solves 
\eqref{eq-HSM}, then $v$ is close to $\mathcal{V}_{R, z}$ for some $R \in \mathbb{R}^+$ and $z \in \mathbb{R}^h$ in a quantitative way.

% Before stating the theorem, let us 
%introduce the energy functional associated with \eqref{eq-HSM}. 
%\begin{align*}
%J_{\mu}(v) := \frac{1}{2} \int_{\mathbb{R}^k \times \mathbb{R}^h} \left( |\nabla v|^2 \, - \, \mu \, \frac{v^2}{|y|^2} \right) \, {\rm d}y \, {\rm d}z \, - \, \frac{1}{p+1} 
%\int_{\mathbb{R}^k \times \mathbb{R}^h} \frac{|v|^{p+1}}{|y|^t} \, {\rm d}y \, {\rm d}z.
%\end{align*}
 We know the critical points of the energy functional $J_{\mbox{\tiny{HLS}}}^{\mu}$ are the solutions to \eqref{eq-HSM}. For topological vector space $X,$ let $X^{\prime}$ denotes its dual.  Now we state our main result. 

\begin{theorem}\label{application-eqHSM}
Let $N \geq 5, \ 3 \leq k < N, \ h \geq 2$ be such that $N = k + h.$ Let $1 < p \leq \frac{N+2}{N-2},$  and set $t = N - \frac{(N-2)(p+1)}{2}$ and let $0 \leq \mu < \frac{(k-2)^2}{4}.$
 Let $\{ v_{\alpha} \}$ be a Palais-Smale 
sequence of non-negative functions for $J_{\mbox{\tiny{HLS}}}^{\mu}$ such that  $\lim_{\alpha \rightarrow \infty} J_{\mbox{\tiny{HLS}}}^{\mu}(v_{\alpha}) =  \frac{p-1}{2(p+1)} \mathcal{S}(N,k, p, \mu),$
then there exists a constant $C=C(N,k, p, \mu) > 0$ such that 
\begin{align*}
\hbox{dist}(v_{\alpha}, \tilde{\mathcal{Z}} )  \leq C \| J_{\mu}^{\prime} (v_{\alpha}) \|_{(D^{1, 2}_{cyl}(\mathbb{R}^k \times \mathbb{R}^h))^{\prime}},
\end{align*}
where  $\tilde{\mathcal{Z}} =  \{ \mathcal{V}_{R, z} :  R \in \mathbb{R}^{+}, z \in \mathbb{R}^h \}
$ denotes the manifolds of non-negative cylindrically symmetric solutions of \eqref{eq-HSM}.

\end{theorem}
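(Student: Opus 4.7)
The plan is to reduce the statement to the hyperbolic setting via the lifting map $\mathcal{T}_k$ and then invoke Theorem~\ref{maintheorem-2}. Set $u_\alpha := \mathcal{T}_k^{-1}(v_\alpha) \in H^1(\mathbb{H}^n)$; by construction $u_\alpha \geq 0$. Using the three identities in \eqref{several computations} one immediately verifies $I_\lambda(u_\alpha) = \omega_k^{-1} J_{\mbox{\tiny{HSM}}}^\mu(v_\alpha)$, and a direct chain-rule argument (together with the fact that $\mathcal{T}_k$ is, up to the factor $\omega_k^{1/2}$, an isometry of the relevant norms) gives
\begin{equation*}
\|I'_\lambda(u_\alpha)\|_{H^{-1}(\mathbb{H}^n)} \leq C(k)\, \|(J_{\mbox{\tiny{HSM}}}^\mu)'(v_\alpha)\|_{(D^{1,2}_{cyl})'}.
\end{equation*}
The scaling identity $\mathcal{S}(N,k,p,\mu) = \omega_k^{1-\frac{2}{p+1}}\, S_{n,p,\lambda}(\mathbb{H}^n)$ (computed by infimizing the two Rayleigh quotients) then shows that $\{u_\alpha\}$ is a nonnegative Palais--Smale sequence for $I_\lambda$ at the level $\tfrac{p-1}{2(p+1)}\, S_{n,p,\lambda}(\mathbb{H}^n)^{(p+1)/(p-1)}$.

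Next I would check that the hypothesis \textbf{(H1)} is automatically satisfied by the parameters $(n,p,\lambda)$ coming from \eqref{several relations}. Indeed, since $N > n$ we have $p \leq \tfrac{N+2}{N-2} < \tfrac{n+2}{n-2}$, so we are always in the subcritical regime on the hyperbolic side; and $\mu \in [0, (k-2)^2/4)$ gives $\lambda < (n-1)^2/4$. Therefore Theorem~\ref{maintheorem-2} (which is stated on $\mathbb{B}^n$ but transfers verbatim to $\mathbb{H}^n$ by the isometry between the ball and half-space models) applies to $\{u_\alpha\}$, producing points $\xi_\alpha \in \mathbb{H}^n$ with
\begin{equation*}
\|u_\alpha - \mathcal{U}\circ \tau_{\xi_\alpha}\|_\lambda \leq C(n,p,\lambda)\, \|I'_\lambda(u_\alpha)\|_{H^{-1}(\mathbb{H}^n)}.
\end{equation*}

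The final step is to push this inequality back through $\mathcal{T}_k$. Writing $\xi_\alpha = (R_\alpha, z_\alpha) \in \mathbb{R}^+ \times \mathbb{R}^h$ in the half-space coordinates, the hyperbolic translation $\tau_{\xi_\alpha}$ decomposes as the dilation $(r,z)\mapsto (R_\alpha r, R_\alpha z)$ composed with the Euclidean translation $(r,z)\mapsto (r, z+z_\alpha)$. A direct calculation from the defining formula of $\mathcal{T}_k$ then shows $\mathcal{T}_k(\mathcal{U}\circ \tau_{\xi_\alpha}) = \mathcal{V}_{R_\alpha, z_\alpha}$ (up to the parametrization convention used in defining $\mathcal{V}_{R,z_0}$), and applying $\mathcal{T}_k$ to the previous inequality together with the norm identities \eqref{several computations} yields the desired bound.

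The subtle point, which I expect to be the only real place that requires care, is the identification of the manifolds $\{\mathcal{U}\circ\tau_\xi : \xi\in\mathbb{H}^n\}$ and $\tilde{\mathcal{Z}}$ under $\mathcal{T}_k$. Both are $n$-dimensional, and one must argue that the $n$-parameter subgroup of hyperbolic isometries consisting of dilations and $z$-translations already acts transitively on $\mathbb{H}^n$ (so the full orbit of the radial $\mathcal{U}$ under all hyperbolic translations is exhausted by this subgroup, modulo the stabilizer of $\mathcal{U}$). Once this geometric dictionary is in place, the nonnegativity of $v_\alpha$ transfers to nonnegativity of $u_\alpha$ — which is essential, since Theorem~\ref{maintheorem-2} is stated only for nonnegative Palais--Smale sequences — and the proof closes. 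As in the remark following Corollary~\ref{corthm2}, one can then also deduce an analogous corollary: if $v \geq 0$ satisfies $(1-\varepsilon_0)\mathcal{S}(N,k,p,\mu)^{(p+1)/(p-1)} \leq \|v\|_\mu^2 \leq (1+\varepsilon_0)\mathcal{S}(N,k,p,\mu)^{(p+1)/(p-1)}$, then $\mathrm{dist}(v,\tilde{\mathcal{Z}}) \leq C\,\|(J_{\mbox{\tiny{HSM}}}^\mu)'(v)\|$.
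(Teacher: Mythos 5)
Your proposal is correct and takes essentially the same route as the paper: lift through $\mathcal{T}_k$, use the identities \eqref{several computations} to transfer the energy, its differential and the norms (up to the factor $\omega_k$), identify $\tilde{\mathcal{Z}}$ with the orbit $\{\mathcal{U}\circ\tau:\tau\in I(\mathbb{H}^n)\}$ via the uniqueness and cylindrical-symmetry results, and then apply Theorem~\ref{maintheorem-2}. The paper's own proof is just a terser version of this argument, and your added care with hypothesis \textbf{(H1)} and with the transitivity of the dilation/$z$-translation subgroup only fills in details the paper leaves implicit.
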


\medskip 

\noindent
{\bf Proof of Theorem~\ref{application-ineqHSM}.} Let $n = h+1$ then we have $1 < p \leq \frac{N+2}{N-2} < \frac{n+2}{n-2}.$ 
Let $\mathcal{T}$ denotes the map which lifts an element $v$ of $D^{1, 2}_{cyl}(\mathbb{R}^k \times \mathbb{R}^h)$ to $H^1(\hn)$ according to the formula 
\begin{align*}
\mathcal{T}(v)(r, z) = r^{\frac{N-2}{2}}v(|y|, z), \ r = |y|, \ (y,z) \in \mathbb{R}^k \times \mathbb{R}^h).
\end{align*}
Let $\mathcal{V}$ be the unique solution to \eqref{eq-HSM} symmetric about both $y$ and $z$ variable and let $\mathcal{V}_{R,z_0} (y,z)= R^{\frac{N-2}{2}}\mathcal{V}(Ry, Rz + z_0)$ where $R>0$ and $z_0 \in \mathbb{R}^h.$ Then 
$\mathcal{T}(\mathcal{V}_{R,z_0})$ solves \eqref{trns-hyp} and hence by uniqueness there exists as isometry $\tau \in I(\hn)$ of the hyperbolic space $\hn$ such that $\mathcal{T}(\mathcal{V}_{R,z_0}) = \calu \circ \tau$ where $\calu$
 is the unique radial solution to \eqref{trns-hyp}. Conversely, for every $\tau$ in the isometry group $I(\hn)$ of $\hn$, $T_k(\calu \circ \tau)$ gives rise to a finite energy solution to \eqref{eq-HSM} where $\mathcal{T}_k$ is as in \eqref{several computations}, and hence by symmetry properties of solutions to \eqref{eq-HSM}, there is a one to one correspondence between $\mathcal{V}_{R,z_0}$ and $\calu \circ \tau.$ 
 Now for any $v \in D^{1, 2}_{cyl}(\mathbb{R}^k \times \mathbb{R}^h)$ by \eqref{several computations} we have 
 
 \begin{align*}
 \|v - \mathcal{V}_{R,z_0}\|_{\mu, D^{1, 2}_{cyl}(\mathbb{R}^k \times \mathbb{R}^h)} = \|\mathcal{T}(v) - \calu \circ \tau\|_{\lambda}
 \end{align*}
where $\lambda$ and $\mu$ are related by \eqref{several relations} and our assumption on $\mu$ implies $\lambda < \frac{(n-1)^2}{4}$. As a result, our stability result applies: Theorem \ref{maintheorem-1} imply
\begin{align*}
\inf_{\tau \in I(\hn)} \|\mathcal{T}(v) - \calu \circ \tau\|_{\lambda} \leq C \delta(\mathcal{T}(v))
\end{align*}
and according to the formulas \eqref{several computations}, $\delta(\mathcal{T}(v)) = \delta_{\mbox{\tiny{HLS}}}^{\mu} (v).$
This completes the proof of the theorem.

%The proof follows directly by computing the following the transformation: for $v \in  D^{1, 2}_{cyl}(\mathbb{R}^n),$ define 

%\begin{equation}\label{transformation}
%u(r, z) : = r^{\frac{n-2}{2}}v(r, z),\  \mbox{where} \ \quad r = |y|
%\end{equation}
%Then using \cite[Appendix~B.1]{MS}, we compute 

%\begin{align*}
%&\frac{1}{w_k} \int_{\mathbb{R}^n} \frac{|v|^p}{|y|^t} \, {\rm d}y \, {\rm d}z \, =\, \int_{\mathbb{H}^{h+1}} u^{p} \, {\rm d}v_{\hn}, \\
%& \frac{1}{w_k} \int_{\mathbb{R}^n} |\nabla v|^2 \, {\rm d}y \, {\rm d}z \, = \, \int_{\mathbb{H}^{h+1}} \left( |\nabla_{\hn} u|^2 - \frac{h^2 - (k-2)^2}{4} \, u^2 \right) \, {\rm d}v_{\hn},
%\end{align*}
%where $w_k$ denotes the volume of the $k-$ dimensional unit sphere and $\hn$ denotes the hyperbolic space of the upper half-space model. 
 %Thus for $k \geq 3,$ this provides an isometric isomorphism 
%between $H^1(\mathbb{H}^{h+1})$ and $D^{1, 2}_{cyl}(\mathbb{R}^n).$ Case $k = 2$ we refer \cite[Lemma~2.2]{MS} for more details. Now substituting the above expression of
%$v$ in Theorem~\ref{maintheorem-1}, and realise that 

%$$
%\|v - cV_{R_0, z_0} \|_{D^{1, 2}_{cyl}(\mathbb{R}^n)} = \|u - \tilde{c} \mathcal{U}_{\tilde{b}} \|_{\lambda},
%$$
%f%or some $ (R_0, z_0) \in \mathbb{R}^{+} \times \mathbb{R}^h $ and $\tilde{b} \in \bn$ with $\lambda = \mu + %\frac{h^2 + (k-2)^2}{4}$, proof of Theorem~\ref{application-ineqHSM} follows.
%\hfill$\square$
%%

\medskip 

{\bf Proof of Theorem~\ref{application-eqHSM}} The proof is essentially similar to the Theorem \ref{application-ineqHSM}.

For $v , w\in  D^{1, 2}_{cyl}(\mathbb{R}^k\times \mathbb{R}^k),$ using \eqref{several computations} we have

\begin{align*}
\int_{\mathbb{R}^k \times \mathbb{R}^h} \left(-\Delta w - \frac{\mu}{|y|^2}v\right)w \, {\rm d}y \, {\rm d}z
&= \int_{\mathbb{R}^k \times \mathbb{R}^h} \left(\nabla w \cdot \nabla w - \frac{\mu}{|y|^2}vw\right) \, {\rm d}y \, {\rm d}z \\
&= \omega_k \int_{\hn} \left( \langle \nabla_{\hn}\mathcal{T}(w), \nabla_{\hn} \mathcal{T}(w)\rangle_{\hn} - \lambda \mathcal{T}(v)\mathcal{T}(w)\right) \, {\rm d}v_{\hn}\\
&=\omega_k \int_{\hn} \left(-\Delta_{\hn} \mathcal{T}(v)-\lambda \mathcal{T}(v) \right)\mathcal{T}(w) \, {\rm d}v_{\hn}.
\end{align*}
Since $\mathcal{T}$ is an isometry between the two Sobolev spaces the result follows by Theorem \ref{maintheorem-2}.

\hfill$\square$

\bigskip
%%%%%%%%%%%%%%%%%%%%%%%%%%%%%%%%%%%%%%%%

  \par\bigskip\noindent
\textbf{Acknowledgments.}
The research of M.~Bhakta is partially supported by the {\em SERB WEA grant (WEA/2020/000005)} and DST Swarnajaynti fellowship (SB/SJF/2021-22/09). D.~Ganguly is partially supported by the INSPIRE faculty fellowship (IFA17-MA98). D.~Karmakar acknowledges the support of the Department of Atomic Energy, Government of India, under project no. 12-R\&D-TFR-5.01-0520. S.~Mazumdar is partially supported by IIT Bombay SEED Grant RD/0519-IRCCSH0-025.

%%%%%%%%%%%%%%%%%%%%%%%%%%%%%%%%%%%%%%%%%%%%%%%%%%%%%%%%

\end{document}